\documentclass[11pt]{amsart}

\usepackage[T1]{fontenc}
\usepackage[utf8]{inputenc}
\usepackage{lmodern}
\usepackage{microtype}
\usepackage{mathtools,amssymb,amsmath,amsthm}
\usepackage{booktabs,longtable,array}
\usepackage{enumitem}
\usepackage{xcolor}
\usepackage{hyperref}
\usepackage[nameinlink,capitalize,noabbrev]{cleveref}
\usepackage{geometry}
\usepackage{listings}
\usepackage{tikz}
\usetikzlibrary{calc}
\hypersetup{colorlinks=true,linkcolor=blue!55!black,citecolor=blue!55!black,urlcolor=blue!55!black}

\lstdefinestyle{codeappendix}{
  basicstyle=\ttfamily\scriptsize,
  stepnumber=1,
  numbersep=6pt,
  showstringspaces=false,
  breaklines=true,
  breakatwhitespace=false,
  columns=fullflexible,
  keepspaces=true,
  frame=single,
  xleftmargin=1.2em,
  framexleftmargin=0.8em,
  aboveskip=0.8em,
  belowskip=0.8em,
  literate={ℝ}{{$\mathbb{R}$}}1 {∧}{{$\wedge$}}1 {≤}{{$\le$}}1 {⊢}{{$\vdash$}}1 {⟨}{{$\langle$}}1 {⟩}{{$\rangle$}}1 {·}{{$\cdot$}}1 {×}{{$\times$}}1
}
\lstdefinestyle{LeanCode}{
  style=codeappendix,
  language={}
}
\lstdefinestyle{PythonCode}{
  style=codeappendix,
  language=Python
}

\newtheorem{theorem}{Theorem}[section]
\newtheorem{proposition}[theorem]{Proposition}
\newtheorem{lemma}[theorem]{Lemma}
\newtheorem{corollary}[theorem]{Corollary}

\theoremstyle{definition}
\newtheorem{definition}[theorem]{Definition}
\newtheorem{remark}[theorem]{Remark}

\crefname{hypothesis}{Hypothesis}{Hypotheses}
\Crefname{hypothesis}{Hypothesis}{Hypotheses}
\crefname{status}{Proof-status note}{Proof-status notes}
\Crefname{status}{Proof-status note}{Proof-status notes}

\DeclareMathOperator{\conv}{conv}
\DeclareMathOperator{\area}{area}
\DeclareMathOperator{\len}{len}
\newcommand{\interior}{\operatorname{int}}
\newcommand{\RR}{\mathbb R}
\newcommand{\SSS}{\mathbb S^1}
\newcommand{\Rhat}{\widehat R}
\newcommand{\eps}{\varepsilon}
\newcommand{\preceqT}{\preceq}
\newcommand{\precT}{\prec}

\title{Balanced Support Calibrations for Moser's Worm Problem: Exact Certificate and Bound of Triangular Cover}
\author{Zhipeng Deng}
\makeatletter

\def\@settitle{%
  \begin{center}%
  \baselineskip14\p@\relax
  \bfseries
  \@title
  \end{center}%
}

\def\@setauthors{%
  \begingroup
  \trivlist
  \centering
  \footnotesize
  \@topsep30\p@\relax
  \advance\@topsep by -\baselineskip
  \item\relax
  \author@andify\authors
  \def\\{\protect\linebreak}%
  \authors
  \endtrivlist
  \endgroup
}

\makeatother

\begin{document}
\begin{abstract}
Moser's worm problem asks for a planar region of minimum area containing a
congruent copy of every rectifiable planar arc of length one.  We study the
Bellman's lost-in-a-forest problem escape path for the
isosceles triangle
\[
 T=\text{conv}\{(-c,0),(c,0),(0,s)\},\qquad
 s=\frac{766}{\sqrt{625565}},\qquad c=\frac{197}{\sqrt{625565}}.
\]
The paper gives an exact positive four-source support calibration, a continuum
to standard-polygonal reduction, a selected $\Lambda$-gap estimate, a corrected
high-angle side-meeting argument, and exact finite ledgers.  The original
inner-anchor ledger covers 25 temporal orders: its 275 nonzero suffix states
reduce to 13 exact squared norms and satisfy $\|R\|<27131/25000$.
If either near anchor fails, a new delimiter-gap lemma produces one or two
omitted hull edges whose normals range over an exact compact fan interval.
A dependency-free rational replay checks 512 one-delimiter orders over 1540
signed half-angle intervals and 4096 two-delimiter orders over a finite signed
rectangle cover.  These three exhaustive branches give unconditionally
\[
 E(T)\ge D:=\frac{82074390584}{84861020075}
 =0.9671624323094728\ldots
\]
Hence, the convex universal cover of Moser's worm 
\[
 \frac{\text{area}(T)}{D^2}
 =\frac{11511821678274125}{44639604443512928}
 =0.257883595112076188\ldots.
\]

\end{abstract}
\maketitle

\pagestyle{plain} 

\textbf{2020 Mathematics Subject Classification.}
52A40, 52C15, 49Q10, 49K30, 90C25.

\textbf{Keywords.}
Moser's worm problem, Bellman's lost-in-a-forest problem, universal cover, support function, convex geometry
\tableofcontents

\section{Introduction}

Moser's worm problem originated in Leo Moser's 1966 collection of unsolved
questions in combinatorial geometry and asks for the planar region of smallest
area that can accommodate every rectifiable planar arc of length one.  A
unit-length rectifiable arc is traditionally called a \emph{worm}.  The
historical formulation is convex, although later work also considers
unrestricted nonconvex covers; these are distinct optimization problems.  We
write
\[
 M_{\rm conv}:=\inf\{\area(K):K\subset\RR^2\text{ compact, convex, and covering every unit worm}\},
\]
and write $M$ for the unrestricted infimum.  Every convex construction is also
an unrestricted construction, so $M\le M_{\rm conv}$.

The upper-bound literature developed through a sequence of explicit universal
covers.  Gerriets and Poole studied convex regions covering arcs of constant
length \cite{GerrietsPoole}.  Norwood, Poole, and Laidacker obtained a cover of
area below $0.27524$ \cite{NorwoodPooleLaidacker}; Wang later reduced the
then-known convex upper bound to $0.270911861$ \cite{Wang}.  In the unrestricted
setting Norwood and Poole produced a nonconvex cover of area $0.260437$
\cite{NorwoodPoole}, subsequently reduced to $0.26007$ by Ploymaklam and
Wichiramala \cite{PloymaklamWichiramala}.  Wetzel's conjectured $30^\circ$ unit
sector has area $\pi/12=0.261799\ldots$; after partial results for drapeable arcs
\cite{MakiWetzelWichiramala,MovshovichWetzel}, Panraksa and Wichiramala proved
that the sector covers every unit arc \cite{PanraksaWichiramala}.  In 2026,
Wichiramala and Panraksa gave a computer-assisted proof for Wetzel's distinct
$30^\circ$--$60^\circ$--$90^\circ$ triangle and a certified homothetic shrink of
area approximately $0.260956$ \cite{WichiramalaPanraksa}.

Lower bounds have improved more slowly.  Schaer's broadest-curve construction
underlies the classical lower bound near $0.2194$ \cite{Schaer,Wetzel}.
Khandhawit and Sriswasdi raised the convex lower bound to $0.227498$ using
forced placements of explicit worms \cite{KhandhawitSriswasdi}; Khandhawit,
Pagonakis, and Sriswasdi later proved
\[
 M_{\rm conv}\ge0.232239
\]
by a sharpened min--max construction \cite{KhandhawitPagonakisSriswasdi}.
Thus a substantial gap remains between lower and upper certificates.

A structural warning is essential.  Finite polygonal complexity does not
characterize the continuum problem: for every fixed $n$, Panraksa, Wetzel, and
Wichiramala constructed a convex region covering every $n$-segment unit
polygonal arc while failing to cover all unit arcs
\cite{PanraksaWetzelWichiramala}.  Hence a finite computation proves a worm
upper bound only when it is coupled to a theorem closing the continuum gap.
This is why the standardization and order-reduction steps below are logical
parts of the theorem rather than implementation details.

Triangles are especially useful because their three facet normals have a
unique positive linear dependence.  Besicovitch-type three-segment paths
govern important triangular regimes, and rigorous covering results are known
for classes of Besicovitch triangles \cite{CoultonMovshovich,Movshovich}.
Gibbs numerically explored Bellman's problem over the isosceles family and
reported candidate Moser quotients near $0.257882$ and $0.257856$, the latter
in a high-angle regime near $75.58^\circ$ \cite{Gibbs}.  Those computations are
valuable guides, but a numerical escape path is a primal upper bound on an
escape threshold; by itself it does not furnish the global lower bound on that
threshold needed for a Moser universal cover theorem.

Bellman's lost-in-a-forest problem supplies the relevant dual viewpoint.  For
a convex body $K$, let $E(K)$ be the infimum length of a rectifiable arc that
cannot be placed in $\interior K$ by a translation and a rotation.  If
$E(K)\ge D$, then the homothet $D^{-1}K$ covers every unit worm.  A general
transformed-boundary and discrete-optimization formulation was developed in
\cite{DengGeneral}; equivalence and convergence results and further variants
appear in \cite{DengProof}, and support-function specializations for strips and
triangles in \cite{DengStrip,DengTriangle}.

The balanced-support method of Temerev and Doria gives a particularly effective
way to prove Bellman lower bounds \cite{TemerevDoria}.  One integrates exact
support inequalities against a positive source measure, folds the resulting
masses through the positive dependence among the triangle normals, aggregates
along the normal fan, and applies a finite zero-sum vector ledger.  The
analytic part is a discrete Abel-summation estimate.  The geometric part is
independent and equally important: it must force every hypothetical shorter
minimizer into a temporal support order tolerated by the ledger.

In this paper, the high-angle triangle considered here is
\[
 T=\conv\{(-c,0),(c,0),(0,s)\},\qquad
 s=\frac{766}{\sqrt{625565}},\qquad
 c=\frac{197}{\sqrt{625565}},
\]
so that $\tan\beta=766/197$ and $\beta=75.5772223320\ldots^\circ$.  The exact
four-source finite certificate produces
\[
 D=\frac{82074390584}{84861020075}=0.9671624323094728\ldots,
 \qquad
 \frac{\area(T)}{D^2}
 =0.257883595112076188\ldots .
\]
The constants were discovered computationally, but after they are fixed the
finite ledger comparisons reduce to exact rational arithmetic and one rational
enclosure of $\sqrt{625565}$.

The present paper closes the former near-anchor branch gap rather than assuming
it.  After the continuum reduction and the selected $\Lambda$-gap
configuration are established, the corrected side-meeting analysis yields the
far-endpoint temporal inequalities.  If the corresponding near-anchor
inequality fails on either side, the delimiter-gap lemma produces an omitted
hull edge whose outward normal lies in the certified signed half-angle range.
The resulting cases are exhaustive: the inner branch is covered by the
original exact 25-order ledger, a single external near anchor is covered by the
one-delimiter signed interval certificate, and two external near anchors are
covered by the signed two-delimiter rectangle certificates together with the
exact reflection/time-reversal symmetry.  Consequently no near-anchor
hypothesis is used in the main theorem, and
\[
 E(T)\ge
 \frac{82074390584}{84861020075}
\]
holds unconditionally.  Bellman--Moser scaling therefore gives the convex
universal cover bound
\[
 M\le M_{\rm conv}\le
 \frac{11511821678274125}{44639604443512928}
 =0.257883595112076188\ldots
\]

The formal theorem proofs are supplied as a single Lean 4/Mathlib source in
\Cref{app:lean}.  To keep that source as a readable proof artifact, it contains
no generated interval/rectangle dataset.  It formalizes the metric deletion and
uncrossing lemmas used in planar standardization, the endpoint-peeling/deque
order core, the three-phase delimiter-order argument, the exact delimiter-angle
identity, the selected-$\Lambda$ implication from its explicit global geometric
inputs, and the scalar, radical, endpoint-ledger, fixed-delimiter-ledger,
target, and area identities.  The generated exact rational
signed-delimiter data have been removed from Lean and remain instead in the
supplementary JSON certificates, whose exhaustive rational replay is performed
by the independent Python verifier in \Cref{app:verifier}.  The Lean proof file
contains no declared \texttt{axiom}, \texttt{sorry}, \texttt{admit}, or
\texttt{native\_decide}.  For precision about formalization scope, the global
planar $\Lambda$-configuration theorem and the compactness/extreme-point
bookkeeping of the full continuum standardization theorem are not silently
postulated: where they are needed, they remain explicit hypotheses or ordinary
mathematical arguments.  Likewise, the exhaustive signed finite dataset is
not asserted as a theorem of the proof-only Lean source; it is a separate exact
certificate checked by the replay program.

The paper is organized as follows.  \Cref{sec:bellman} develops the
Bellman--Moser scaling principle, the exact triangular support criterion, and
the directed ledger.  \Cref{sec:geometry} gives the exact high-angle geometry,
continuum standardization, cyclic bitonicity, and the selected
$\Lambda$-gap.  \Cref{sec:metric} proves the corrected side-meeting and
one-sided exclusion estimates and derives the delimiter-gap mechanism from a
near-anchor failure.  \Cref{sec:finite} gives the four-source fold and the
exact 25-order inner certificate, while \Cref{sec:delimiter} gives the complete
signed one- and two-delimiter interval certificates.
\Cref{sec:conditional} states the unconditional Bellman and Moser theorems and
includes an exact geometric figure of the resulting homothet and a unit
V-worm.  \Cref{sec:branches} records the marked-$Z$ algebraic branch and the
triangle-restricted optimization framework.  \Cref{sec:duality} develops
balanced-circuit duality for general convex bodies and certified computational
hierarchies.  The final section records verification scope.  Exact norms, a
minimal order-theoretic audit, the consolidated Lean source, and the exact
symbolic verifier appear in the appendices.

\section{Bellman--Moser duality and support calibrations}\label{sec:bellman}

\subsection{Definitions and scaling}

\begin{definition}
For a nonempty compact set $H\subset\RR^2$ and an arbitrary vector
$v\in\RR^2$, define the positively homogeneous support function
\[
 h_H(v):=\max_{x\in H}x\cdot v.
\]
For $u_\phi=(\cos\phi,\sin\phi)$ write $h_H(\phi):=h_H(u_\phi)$.
\end{definition}

The extension from unit vectors to all $v\in\RR^2$ is deliberate: the folded
calibration vectors below carry positive masses, and then
$h_H(av)=a h_H(v)$ for $a\ge0$ is used without a change of notation.

\begin{definition}
Let $K\subset\RR^2$ be compact and convex with nonempty interior.  A
rectifiable arc $\Gamma$ is an \emph{escape path} for $K$ if no translated and
rotated copy of $\Gamma$ is contained in $\interior K$. Put
\[
 E(K):=\inf\{\len(\Gamma):\Gamma\text{ is an escape path for }K\}.
\]
\end{definition}

\begin{proposition}[Bellman--Moser scaling]\label[proposition]{prop:scaling}
If $E(K)\ge L>0$, then $L^{-1}K$ contains a congruent copy of every unit worm \cite{DengGeneral}. 
Consequently
\[
 M\le M_{\rm conv}\le \frac{\area(K)}{L^2}.
\]
\end{proposition}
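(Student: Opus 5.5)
The plan is a direct scaling argument, after which the area bound follows from the definitions of $M_{\rm conv}$ and $M$. Fix a unit worm $\Gamma$, i.e.\ a rectifiable arc of length one. Set $\Gamma_L:=L\Gamma$, the image of $\Gamma$ under the dilation $x\mapsto Lx$. It is rectifiable of length $L$. Whether an arc can be placed in $\interior K$ depends only on its congruence class, so we may take $\Gamma$ to be any representative.

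The key step is to show that $\Gamma_L$ fits into $\interior K$ by a rigid motion. Here the infimum in the definition of $E(K)$ needs care, since the hypothesis $E(K)\ge L$ does not by itself exclude an escape path of length exactly $L$. I would avoid this with a shrinking trick. For $0<t<1$, the arc $t\Gamma_L$ has length $tL<L\le E(K)$, so it is not an escape path. Hence some rigid motion $g_t$ satisfies $g_t(t\Gamma_L)\subset\interior K\subset K$. Rescaling by $(tL)^{-1}$ gives $(tL)^{-1}g_t(t\Gamma_L)\subset (tL)^{-1}K$, and the left side is a congruent copy of $\Gamma$.

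Next, let $t\to1$ by compactness. The rigid motions $g_t$ send a fixed point of $\Gamma_L$ into the bounded set $K$, so they range over a compact subset of the Euclidean group. Choose a subsequence with $g_{t_n}\to g$. Every point $x\in\Gamma_L$ satisfies $g_{t_n}(t_nx)\in K$ and $g_{t_n}(t_nx)\to g(x)$. Since $K$ is closed, $g(\Gamma_L)\subset K$. Then $L^{-1}g(\Gamma_L)$ is congruent to $\Gamma$ and lies in $L^{-1}K$. This limit step is the only mildly delicate point: it is where closedness of $K$ compensates for the strict inequality lost in the infimum. Alternatively, the statement can be quoted from \cite{DengGeneral}.

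For the consequence, note that $L^{-1}K$ is compact and convex, so it is admissible in the definition of $M_{\rm conv}$. Its area is $\area(K)/L^2$, which gives $M_{\rm conv}\le \area(K)/L^2$. The inequality $M\le M_{\rm conv}$ holds because the convex infimum is taken over a subclass of the unrestricted covers.
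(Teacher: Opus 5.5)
Your proof is correct and follows essentially the same route as the paper: shrink the length-$L$ arc by factors $t<1$ (the paper uses $1-1/n$), place the shrunken copies in $\interior K$ using $E(K)\ge L$, pass to a convergent subsequence of rigid motions by compactness, use closedness of $K$ to place the full arc in $K$, and rescale by $L^{-1}$. The area consequence also matches the paper, since $L^{-1}K$ is compact and convex, hence admissible for $M_{\rm conv}$, and $\area(L^{-1}K)=\area(K)/L^2$.
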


\begin{proof}
Every arc of length $<L$ is placeable in $\interior K$ by the definition of
$E(K)$.  Let $\Gamma$ have length exactly $L$ and set
$\Gamma_n=(1-1/n)\Gamma$.  Choose
$z_n+R_{\theta_n}\Gamma_n\subset\interior K$.  After fixing one point of $\Gamma$,
compactness of $K$ bounds $z_n$; compactness of $SO(2)$ gives a subsequence
with $z_n\to z$ and $R_{\theta_n}\to R_\theta$.  Since $K$ is closed,
$z+R_\theta\Gamma\subset K$.  Thus $K$ covers every arc of length at most $L$.
Scaling a length-$L$ copy of a unit worm by $L^{-1}$ proves the claim, and area
scales by $L^{-2}$.
\end{proof}

\begin{proposition}[Quotient formulation]\label[proposition]{prop:quotient}
For compact convex bodies with nonempty interior,
\[
 M_{\rm conv}=\inf_K\frac{\area(K)}{E(K)^2}.
\]
\end{proposition}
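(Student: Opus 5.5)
We prove the two inequalities separately. The infimum is taken over compact convex $K$ with nonempty interior, and we use the convention that the quotient equals $+\infty$ when $E(K)=0$. Since $K$ has nonempty interior, any arc shorter than the inradius fits in $\interior K$, so in fact $E(K)>0$.

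For the bound $M_{\rm conv}\le\inf_K \area(K)/E(K)^2$, fix such a $K$. If $E(K)=\infty$ there is nothing to do, but this cannot happen: a segment longer than $\operatorname{diam}K$ is an escape path. So we may apply \Cref{prop:scaling} with $L=E(K)$, which is legitimate because $E(K)\ge E(K)>0$. The homothet $E(K)^{-1}K$ is then a compact convex cover of every unit worm, of area $\area(K)/E(K)^2$. Taking the infimum over $K$ gives the inequality.

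For the reverse bound, let $C$ be any compact convex set covering every unit worm. We must show $\area(C)\ge \inf_K\area(K)/E(K)^2$. There are three steps.

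\textbf{Step 1: $C$ has nonempty interior.} If $C$ had empty interior, it would lie in a line, and a unit V-worm (two segments forming an angle) could not be placed in it. So $C$ is admissible in the infimum.

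\textbf{Step 2: $E(C)\ge1$.} Let $\Gamma$ have length $\ell<1$, and extend $\Gamma$ by a straight segment to a worm of length $1$. Some congruent copy of this worm lies in $C$, so $\Gamma$ embeds in $C$; the difficulty is that it must embed in $\interior C$. Rather than arguing directly, we shrink. Fix an interior point $p$ of $C$ and, for $\lambda\in(0,1)$, consider the homothet $C_\lambda$ of ratio $\lambda$ about $p$. Then $C_\lambda\subset \interior C\cup\{p\}$, and more precisely $C_\lambda\subset\interior C$, since the segment from an interior point to any point of $C$ lies in $\interior C$ except possibly at its far endpoint. Now scale $\Gamma$ by $1/\lambda$, choosing $\lambda\ge\ell$ so that the length stays at most $1$. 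The scaled arc, extended as before to a unit worm, sits in $C$. Contracting by $\lambda$ about $p$ then places a congruent copy of $\Gamma$ in $C_\lambda\subset\interior C$. Hence every arc shorter than $1$ is not an escape path, and $E(C)\ge1$.

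\textbf{Step 3: conclude.} From Step 2,
\[
\frac{\area(C)}{E(C)^2}\le\area(C).
\]
Taking the infimum over all admissible covers $C$ gives $\inf_K\area(K)/E(K)^2\le M_{\rm conv}$, which together with the first inequality proves the claim.

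The only genuinely delicate point is the passage in Step 2 from "contained in $C$" to "contained in $\interior C$". The homothety about an interior point resolves it, using the standard fact that for a convex body the open segment from an interior point to any point of the body lies in the interior.
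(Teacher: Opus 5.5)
Your proof is correct and follows essentially the same route as the paper. One direction comes from \Cref{prop:scaling} with $L=E(K)$. The other scales a length-$\ell<1$ arc to unit length, places it in the cover, and contracts about an interior point to land in the interior, which gives $E(C)\ge1$. You also make explicit two points the paper leaves tacit: that a convex universal cover has nonempty interior (via a non-collinear V-worm), and that $0<E(K)<\infty$.
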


\begin{proof}
The inequality ``$\le$'' follows from \Cref{prop:scaling}.  Conversely, let
$U$ be a convex universal unit-worm cover.  For $0<r<1$, a length-$r$ arc
$\Gamma$ becomes a unit worm after scaling by $r^{-1}$, hence has a placement
$P\subset U$.  Choose $y\in\interior U$ and $\delta>0$ with
$\overline B(y,\delta)\subset U$.  For every $p\in P$ and $0<r<1$,
convexity gives
\[
 B\bigl(y+r(p-y),(1-r)\delta\bigr)\subset U,
\]
so $y+r(P-y)\subset\interior U$.  If
$P=z+R(r^{-1}\Gamma)$, then
$y+r(P-y)=\bigl(y+r(z-y)\bigr)+R\Gamma$, hence this is a rigid placement of
$\Gamma$.  Therefore $E(U)\ge1$ and
$\inf_K\area(K)/E(K)^2\le\area(U)$.  Infimize over $U$.
\end{proof}

\subsection{Exact support criterion for a triangle}

For a unit-base triangle with base angles $\alpha,\beta>0$ and
$\alpha+\beta<\pi$, let its outward unit normals be $n_1,n_2,n_3$ in cyclic
order.  Their positive dependence can be normalized as 
\begin{equation}\label{eq:triangle-balance}
 \sin\beta\,n_1+\sin\alpha\,n_2+\sin(\alpha+\beta)\,n_3=0.
\end{equation}

\begin{theorem}[Exact triangular support criterion]\label[theorem]{thm:tri-support}
A compact convex set $H$ fails to admit a translated and rotated placement in
the interior of the unit-base triangle $T_{\alpha,\beta}$ if and only if, for
every $t\in\RR$ \cite{DengTriangle},
\[
 \sin\beta\,h_H(t+\pi+\alpha)+\sin\alpha\,h_H(t+\pi-\beta)
 +\sin(\alpha+\beta)h_H(t)\ge \sin\alpha\sin\beta.
\]
\end{theorem}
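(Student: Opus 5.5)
The plan is to reduce everything to the three support inequalities of a single triangle, eliminate the translation by the positive dependence \eqref{eq:triangle-balance}, and absorb the rotation into the parameter $t$.

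\textbf{Step 1 (interior containment as three strict inequalities).} Fix coordinates so that $T_{\alpha,\beta}$ has vertices $(0,0)$, $(1,0)$ and the apex above the base. The outward normals are then
\[
n_{\rm base}=u_{-\pi/2},\qquad n_{\rm left}=u_{\alpha+\pi/2},\qquad n_{\rm right}=u_{\pi/2-\beta}.
\]
Write $T_{\alpha,\beta}=\{x: x\cdot n_i\le b_i\}$, so that $\interior T_{\alpha,\beta}=\{x:x\cdot n_i<b_i\}$. For compact $H$, a translate $z+H$ lies in the interior if and only if
\[
h_H(n_i)+z\cdot n_i<b_i\qquad\text{for } i=1,2,3.
\]
This holds because the maximum of a continuous function on a compact set is attained, so the support value is attained and strictness is preserved.

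\textbf{Step 2 (eliminating the translation).} Let $\lambda_i>0$ be the weights in \eqref{eq:triangle-balance}. Necessity is immediate: multiplying the three strict inequalities by $\lambda_i$ and summing kills the $z$ term, which gives
\[
\sum_i\lambda_i h_H(n_i)<\sum_i\lambda_i b_i.
\]
For sufficiency, put $s_i=b_i-h_H(n_i)$ and assume $\sum\lambda_i s_i>0$. The linear map $z\mapsto(z\cdot n_i)_i$ is injective, since the $n_i$ span $\RR^2$. Its image is two-dimensional and contained in the plane $\{y:\sum\lambda_i y_i=0\}$, so it equals that plane. Choose $\varepsilon_i>0$ with $\sum\lambda_i\varepsilon_i=\sum\lambda_i s_i$; for example, take $\varepsilon_i=s_\ast$ for all $i$, where $s_\ast=\sum\lambda_i s_i/\sum\lambda_i$. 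Then $y_i=s_i-\varepsilon_i$ lies in the plane, so it equals $(z\cdot n_i)_i$ for some $z$. Since $\varepsilon_i>0$, this $z$ gives $z\cdot n_i<s_i$, which are exactly the strict inequalities of Step 1.

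\textbf{Step 3 (evaluating the constant and matching the angles).} To evaluate $\sum\lambda_i b_i$, note that with the vertex $(0,0)$ at the origin, $b_{\rm base}=b_{\rm left}=0$. Only the right side contributes: its support number is the distance from the origin to that side, $\sin\beta$, and its weight is $\sin\alpha$. Hence
\[
\sum_i\lambda_i b_i=\sin\alpha\sin\beta.
\]
Equivalently, the weights are $\sin(\alpha+\beta)$ times the side lengths (by the law of sines), so $\sum\lambda_ib_i=\sin(\alpha+\beta)\cdot 2\area=\sin\alpha\sin\beta$. The weight pairing must also be checked: the left side, of length $\sin\beta/\sin(\alpha+\beta)$, carries weight $\sin\beta$; the right side carries $\sin\alpha$; the base carries $\sin(\alpha+\beta)$.

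Rotations enter through the identity
\[
h_{R_\theta H}(\phi)=h_H(\phi-\theta).
\]
Writing $t=-\pi/2-\theta$, the three normal angles become $t$ for the base, $t+\pi+\alpha$ for the left side, and $t+\pi-\beta$ for the right side. This reproduces the displayed expression with the stated weights. So some rotation-plus-translation of $H$ fits in the interior if and only if the displayed inequality fails for some $t$. Taking negations gives the theorem.

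The main obstacle is Step 2's converse: it needs both the surjectivity of the map onto the balance plane and a careful handling of strict versus non-strict inequalities. Getting the angle and weight bookkeeping in Step 3 consistent with \eqref{eq:triangle-balance} is the other point requiring care.
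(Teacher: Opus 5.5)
Your proposal is correct and follows the paper's proof essentially step for step. It uses the same normals and offsets with the base at $(0,0),(1,0)$, and the same elimination of the translation: the image of $z\mapsto(n_i\cdot z)_i$ is exactly the plane $\{y:\lambda\cdot y=0\}$, and a uniform slack $\varepsilon=\lambda\cdot r/\sum_i\lambda_i$ is subtracted. It also gives the same evaluation $\lambda\cdot b=\sin\alpha\sin\beta$, and your explicit substitution $t=-\pi/2-\theta$ via $h_{R_\theta H}(\phi)=h_H(\phi-\theta)$ simply spells out the paper's closing remark that rotating the three normals together yields the angular formula.
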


\begin{proof}
It is useful to make the strict-feasibility step completely explicit.  Place
the unit base at $(0,0)$ and $(1,0)$.  With the third vertex above the base,
one may take the three outward unit normals and offsets to be
\[
 n_1=(-\sin\alpha,\cos\alpha),\quad b_1=0,\qquad
 n_2=(\sin\beta,\cos\beta),\quad b_2=\sin\beta,
\]
\[
 n_3=(0,-1),\qquad b_3=0.
\]
Then
\[
 \lambda_1n_1+\lambda_2n_2+\lambda_3n_3=0,
 \qquad
 (\lambda_1,\lambda_2,\lambda_3)
 =(\sin\beta,\sin\alpha,\sin(\alpha+\beta)),
\]
and
\[
 \lambda_1b_1+\lambda_2b_2+\lambda_3b_3
 =\sin\alpha\sin\beta.
\]
For this fixed orientation, translating $H$ into the three open supporting
half-planes is equivalent to
\[
 n_i\cdot z<r_i:=b_i-h_H(n_i),\qquad i=1,2,3.
\]
Let $N:\RR^2\to\RR^3$ be $Nz=(n_1\cdot z,n_2\cdot z,n_3\cdot z)$ and put
$\lambda=(\lambda_1,\lambda_2,\lambda_3)$.  The normals span $\RR^2$, so
$N$ has rank two.  Since $\lambda$ spans the one-dimensional left kernel,
\[
 \operatorname{im}N=\{y\in\RR^3:\lambda\cdot y=0\}.
\]
If $Nz<r$ componentwise, positivity of the $\lambda_i$ gives
$0=\lambda\cdot Nz<\lambda\cdot r$.  Conversely, if
$q:=\lambda\cdot r>0$, set
\[
 \varepsilon:=\frac{q}{\lambda_1+\lambda_2+\lambda_3}>0,
 \qquad y:=r-\varepsilon(1,1,1).
\]
Then $\lambda\cdot y=0$, hence $y=Nz$ for some $z$, and $y_i<r_i$ for all
$i$.  Thus strict translation feasibility is equivalent exactly to
$\lambda\cdot r>0$.  Substituting the offsets above shows that failure of
strict placement at this orientation is equivalent to
\[
 \sin\beta\,h_H(n_1)+\sin\alpha\,h_H(n_2)
 +\sin(\alpha+\beta)h_H(n_3)\ge\sin\alpha\sin\beta.
\]
Rotating the three normals together yields the asserted angular formula.
\end{proof}

Fix $0<\beta<\pi/2$, set $s=\sin\beta$, $c=\cos\beta$, and define
\[
 T_\beta:=\conv\{(-c,0),(c,0),(0,s)\},\qquad \area(T_\beta)=sc.
\]
Then the equal sides have length one.

\begin{corollary}[Isosceles support criterion]\label[corollary]{cor:iso-support}
A compact convex set $H$ is an escape hull for $T_\beta$ if and only if
\begin{equation}\label{eq:iso-support}
 2c\,h_H(\phi)+h_H(\phi+\pi-\beta)+h_H(\phi+\pi+\beta)\ge2sc
 \qquad(\phi\in\RR).
\end{equation}
Moreover
\[
 2c\,u_\phi+u_{\phi+\pi-\beta}+u_{\phi+\pi+\beta}=0.
\]
\end{corollary}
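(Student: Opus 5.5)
The plan is to obtain \eqref{eq:iso-support} by specializing \Cref{thm:tri-support} to $\alpha=\beta$ and rescaling, and then to check the vector identity directly.

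\textbf{Step 1: specialize.} The triangle $T_\beta$ has base from $(-c,0)$ to $(c,0)$, of length $2c$, and base angles $\alpha=\beta$. It is therefore the homothet $2c\,T_{\beta,\beta}$ of the unit-base triangle, up to a translation. A set $H$ has a rigid placement in $\interior T_\beta$ if and only if $(2c)^{-1}H$ has one in $\interior T_{\beta,\beta}$. Support functions scale linearly: $h_{(2c)^{-1}H}=(2c)^{-1}h_H$.

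\textbf{Step 2: substitute.} With $\alpha=\beta$, \Cref{thm:tri-support} says that $(2c)^{-1}H$ admits no placement if and only if, for all $t$,
\[
 \sin\beta\,h(t+\pi+\beta)+\sin\beta\,h(t+\pi-\beta)+\sin2\beta\,h(t)\ge\sin^2\beta ,
\]
where $h=(2c)^{-1}h_H$. Since $\sin2\beta=2sc$, we divide by $s>0$ and multiply by $2c$. This gives
\[
 h_H(t+\pi+\beta)+h_H(t+\pi-\beta)+2c\,h_H(t)\ge 2sc ,
\]
which is \eqref{eq:iso-support} with $\phi=t$. Here ``escape hull'' is read as exactly this failure of interior placement.

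\textbf{Step 3: the identity.} We have $u_{\phi+\pi\pm\beta}=-u_{\phi\pm\beta}$, and
\[
 u_{\phi+\beta}+u_{\phi-\beta}=2\cos\beta\,u_\phi=2c\,u_\phi .
\]
Hence $2c\,u_\phi+u_{\phi+\pi-\beta}+u_{\phi+\pi+\beta}=2c\,u_\phi-2c\,u_\phi=0$. This is also just \eqref{eq:triangle-balance} with $\alpha=\beta$ after dividing by $s$.

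\textbf{Main obstacle.} The only delicate point is bookkeeping: we must confirm that the three angles $\phi$, $\phi+\pi-\beta$, $\phi+\pi+\beta$ correspond, up to a common rotation, to the outward normals of $T_\beta$. The base normal is $-\pi/2$ and the side normals are $\pi/2\mp\beta$. Taking $\phi=-\pi/2$ gives $\pi/2-\beta$ and $\pi/2+\beta$, which matches. Since the criterion is required for all $\phi$, any relabeling or reflection ambiguity is absorbed.
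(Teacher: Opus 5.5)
Your proof is correct and follows the intended derivation: the paper gives no separate argument, treating the corollary as the case $\alpha=\beta$ of \Cref{thm:tri-support} rescaled by the base length $2c$ (multiply through by $2c/s$), with the vector identity being \eqref{eq:triangle-balance} at $\alpha=\beta$ divided by $s$. Your scaling step and your normal-angle check (taking $\phi=-\pi/2$ gives the side normals $\pi/2\mp\beta$) are both correct.
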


Because $h_\Gamma=h_{\conv\Gamma}$, escape depends only on the convex hull of
the path.

\subsection{Balanced measures and the directed ledger}

Let $\nu$ be a finite positive measure on source angles and put
\[
 \mu_\nu:=2c\,\nu+(\tau_{\pi-\beta})_\#\nu+(\tau_{\pi+\beta})_\#\nu.
\]
Integrating \eqref{eq:iso-support} gives, for every escape hull $H$,
\begin{equation}\label{eq:integrated-support}
 \int h_H(u_\theta)\,d\mu_\nu(\theta)\ge2sc\,\nu(\SSS),
 \qquad
 \int u_\theta\,d\mu_\nu(\theta)=0.
\end{equation}

\begin{lemma}[Directed support ledger]\label[lemma]{lem:ledger}
Let $P_0,\dots,P_N$ be the vertices of a polygonal path in temporal order and
let $v_1,\dots,v_m\in\RR^2$ satisfy $\sum_jv_j=0$.  Choose for every $j$ a
path vertex $P_{\iota(j)}$ with
$P_{\iota(j)}\cdot v_j=h_H(v_j)$, where
$H=\conv\{P_0,\dots,P_N\}$.  Put
\[
 R_i:=\sum_{\iota(j)\ge i}v_j,\qquad i=1,\dots,N.
\]
If $\|R_i\|\le R$ for every $i$, then
\[
 \sum_{j=1}^m h_H(v_j)\le R\sum_{i=0}^{N-1}\|P_{i+1}-P_i\|.
\]
\end{lemma}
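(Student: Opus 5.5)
The plan is to convert the left-hand side into a sum of vertex positions, rearrange it by Abel summation so that it involves only increments of the path, and then apply Cauchy--Schwarz term by term. This is a discrete summation-by-parts identity.

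\textbf{Step 1 (rewrite the support values).} By the choice of $\iota$, $h_H(v_j)=P_{\iota(j)}\cdot v_j$. Grouping the indices $j$ by their anchor vertex gives
\[
 \sum_{j=1}^m h_H(v_j)=\sum_{i=0}^N P_i\cdot w_i,\qquad w_i:=\sum_{\iota(j)=i}v_j .
\]
We also set $R_0:=\sum_j v_j=0$ and $R_{N+1}:=0$. With these conventions $w_i=R_i-R_{i+1}$ for $0\le i\le N$. This is consistent with the definition $R_i=\sum_{\iota(j)\ge i}v_j$ for $1\le i\le N$, and $R_0$ also equals $\sum_{\iota(j)\ge0}v_j$.

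\textbf{Step 2 (Abel summation).} Substituting $w_i=R_i-R_{i+1}$, we get
\[
 \sum_{i=0}^N P_i\cdot(R_i-R_{i+1})
 =P_0\cdot R_0+\sum_{i=1}^{N}(P_i-P_{i-1})\cdot R_i-P_N\cdot R_{N+1}
 =\sum_{i=1}^N (P_i-P_{i-1})\cdot R_i .
\]
The boundary terms vanish because $R_0=0$, which is exactly the zero-sum hypothesis, and $R_{N+1}=0$. This step is where the hypothesis $\sum_j v_j=0$ is used. It is also what makes the estimate independent of the position of the path, which is necessary because support values are not translation invariant while the right-hand side is.

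\textbf{Step 3 (estimate).} Cauchy--Schwarz and the hypothesis $\|R_i\|\le R$ give
\[
 \sum_{i=1}^N (P_i-P_{i-1})\cdot R_i\le R\sum_{i=1}^N\|P_i-P_{i-1}\|.
\]
After reindexing, this is the claimed bound $R\sum_{i=0}^{N-1}\|P_{i+1}-P_i\|$.

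The only delicate point is the index bookkeeping in Steps 1 and 2. We must check that the suffix sums $R_i$ pair with the segment $P_{i-1}P_i$ leading into vertex $i$, and that vertices anchored at $i=0$ contribute only through $R_0=0$. A sign or off-by-one error here would break the telescoping, so it is worth verifying both boundary terms explicitly. No geometric input is needed beyond the choice of the anchors $\iota(j)$. In particular, convexity of $H$ is irrelevant, since only the identity $h_H(v_j)=P_{\iota(j)}\cdot v_j$ is used.
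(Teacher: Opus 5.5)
Your proposal is correct and follows essentially the same argument as the paper. The paper writes $P_{\iota(j)}=P_0+\sum_{i<\iota(j)}(P_{i+1}-P_i)$, cancels the $P_0$ term using $\sum_j v_j=0$, and interchanges sums. This gives the same identity $\sum_j h_H(v_j)=\sum_{i=1}^N (P_i-P_{i-1})\cdot R_i$ that you obtain by Abel summation with $w_i=R_i-R_{i+1}$, and both proofs then finish with Cauchy--Schwarz.
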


\begin{proof}
Write
$P_{\iota(j)}=P_0+\sum_{i<\iota(j)}(P_{i+1}-P_i)$.
The $P_0$ term cancels because $\sum_jv_j=0$.  Interchanging finite sums yields
\[
 \sum_jh_H(v_j)=\sum_{i=0}^{N-1}(P_{i+1}-P_i)\cdot R_{i+1}.
\]
Cauchy--Schwarz gives the result.
\end{proof}

If a direction exposes an edge, its mass may be assigned to either endpoint
or split between them.  Intermediate split states are convex combinations of
the two endpoint ledger states, so a Euclidean radius bound survives splitting.

\begin{lemma}[Reversal invariance for a balanced ledger]\label[lemma]{lem:reversal}
Let $v_1,\dots,v_m$ satisfy $\sum_jv_j=0$.  The multiset of norms of nonzero
suffix sums of $(v_m,\dots,v_1)$ equals the multiset of norms of nonzero prefix
sums of $(v_1,\dots,v_m)$, and each such prefix is the negative of a
complementary suffix.  Hence any uniform suffix-radius bound is invariant under
reversing the entire vector order.
\end{lemma}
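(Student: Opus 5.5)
The plan is to compute the suffix sums of the reversed sequence directly and compare them with the prefix sums of the original, using only the balance condition $\sum_j v_j=0$. Write $w_k:=v_{m+1-k}$ for $k=1,\dots,m$, so that $(w_1,\dots,w_m)=(v_m,\dots,v_1)$. The suffix sums of $w$ are
\[
 S_i:=\sum_{k\ge i}w_k=\sum_{k=i}^{m}v_{m+1-k}=\sum_{j=1}^{m+1-i}v_j=:P_{m+1-i},\qquad i=1,\dots,m.
\]
Hence $i\mapsto m+1-i$ is a bijection between suffix indices of the reversed order and prefix lengths $1,\dots,m$ of the original order, and it carries each suffix sum to the prefix sum it equals. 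The two families therefore coincide as indexed families, so the multisets of their nonzero norms agree; discarding zeros on both sides is harmless.

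Next I would record the complementary identity, which is where the balance condition enters. For each $1\le p\le m$,
\[
 P_p=\sum_{j\le p}v_j=-\sum_{j>p}v_j,
\]
because the total sum vanishes. The right-hand side is minus a suffix sum of the original order $(v_1,\dots,v_m)$ (the empty suffix when $p=m$, in which case $P_m=0$). Thus every prefix is the negative of a complementary suffix, with equal norm.

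Combining the two steps gives a norm-preserving correspondence between the nonzero suffix sums of the reversed order and the nonzero suffix sums of the original order. A prefix $P_p$ with $p<m$ corresponds to the suffix starting at index $p+1$. The correspondence misses only the full suffix $\sum_j v_j=0$, which is zero and so does not appear among the nonzero states. Consequently $\max_i\|R_i\|$ over the ledger states of \Cref{lem:ledger} is the same for an order and its reversal, which is the asserted invariance of a uniform suffix-radius bound.

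The only step needing care is the index bookkeeping at the endpoints: the full sum and the empty sum both vanish, so the correspondence of nonzero states is exact rather than off by one. There is no substantive obstacle beyond this.
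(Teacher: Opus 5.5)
Your proposal is correct and follows the paper's proof. The paper likewise observes that a suffix of the reversed sequence is a prefix of the original, uses $\sum_{j\le k}v_j=-\sum_{j>k}v_j$ for $1\le k<m$ from the balance condition, and takes norms; your explicit index map $i\mapsto m+1-i$ and the endpoint bookkeeping (the full and empty sums both vanish) spell out what the paper leaves implicit.
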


\begin{proof}
For $1\le k<m$,
$\sum_{j=1}^kv_j=-\sum_{j=k+1}^mv_j$.  A suffix of the reversed sequence is a
prefix of the original sequence.  Taking norms proves the assertion.
\end{proof}

\section{High-angle geometry and continuum reduction}\label{sec:geometry}

\subsection{The exact triangle}

Put
\[
 p=766,\quad q=197,\quad S=p^2+q^2=625565,
 \qquad s=\frac p{\sqrt S},\quad c=\frac q{\sqrt S}.
\]
Thus $\tan\beta=766/197$ and
\[
 \area(T)=sc=\frac{150902}{625565}.
\]
For the metric anchoring argument use the congruent unit-side realization
\[
 \Delta=\conv\{(0,0),(1,0),(\cos\delta,\sin\delta)\},
 \qquad \delta=\pi-2\beta.
\]
Its relevant data are rational:
\begin{equation}\label{eq:ABC}
 \rho:=\sin\delta=\frac{301804}{625565},\qquad
 A:=\cot\delta=\frac{547947}{301804},\qquad
 b:=\cot\beta=\frac{197}{766},
\end{equation}
with
\begin{equation}\label{eq:ABC-rel}
 A+b=\frac1\rho,\qquad A>b>0,\qquad \rho>\frac5{12},
\end{equation}
and
\begin{equation}\label{eq:Delta-halfplanes}
 \Delta=\{(x,y): y\ge0,\ x\ge Ay,\ x+by\le1\}.
\end{equation}

Define
\[
 \Phi(H):=\min_{\phi\in\RR}
 \bigl(2c\,h_H(\phi)+h_H(\phi+\pi-\beta)+h_H(\phi+\pi+\beta)\bigr).
\]
Then $H$ is an escape hull iff $\Phi(H)\ge2sc$.
The map $\Phi$ is translation invariant, positively homogeneous, and satisfies
\[
 |\Phi(H)-\Phi(K)|\le(2c+2)d_H(H,K).
\]

\subsection{Standardization}

\begin{definition}
A polygonal path is \emph{standard} if it is simple and its nonrepeated
vertices are exactly the extreme vertices of its convex hull, each visited
once.  A hull edge not traversed by the path is a \emph{gap}.  A standard path
is called \emph{convex} if its links follow one hull-boundary chain between its
endpoints; otherwise it is \emph{nonconvex}.
\end{definition}

\begin{proposition}[Standard polygonal reduction]\label[proposition]{prop:standardize}
If a rectifiable escape path of length $<C$ exists, then there are an integer
$N$ and a standard polygonal escape path $\eta$ of length $<C$ that is
length-minimal among polygonal escape paths with at most $N$ segments.
\end{proposition}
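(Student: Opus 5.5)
The plan has four stages: approximate by a polygon, fix the number of segments, minimize, and show the minimizer is standard.

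\textbf{Polygonal approximation.} Let $\Gamma$ be a rectifiable escape path with $\len(\Gamma)=L<C$, and put $\eps:=C-L>0$. By Corollary~\ref{cor:iso-support}, $\Gamma$ escapes exactly when $\Phi(\conv\Gamma)\ge 2sc$. Since $\Phi$ is positively homogeneous, $\Phi(\conv(\lambda\Gamma))=\lambda\Phi(\conv\Gamma)$. So for any $\lambda>1$ with $\lambda L<C$, the scaled arc $\lambda\Gamma$ satisfies $\Phi(\conv(\lambda\Gamma))\ge 2sc\lambda>2sc$, a strict margin. Now take a fine inscribed polygon $\pi$ of $\lambda\Gamma$, with vertices on the arc. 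Then $\len(\pi)\le\lambda L<C$, and $d_H(\conv\pi,\conv\lambda\Gamma)$ can be made arbitrarily small. The Lipschitz bound $|\Phi(H)-\Phi(K)|\le(2c+2)d_H(H,K)$ then keeps $\Phi(\conv\pi)\ge2sc$. Hence $\pi$ is a polygonal escape path of length $<C$ with some number $N$ of segments.

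\textbf{Minimization.} Fix this $N$ and consider all polygonal paths with at most $N$ segments, parametrized by vertex tuples $(P_0,\dots,P_N)$, with repeated vertices allowed. Normalize by $P_0=0$. The length functional is continuous. The escape condition $\Phi(\conv\{P_i\})\ge 2sc$ is closed, because $\Phi$ is continuous in Hausdorff distance and the hull depends continuously on the vertices. Restricted to length $\le\len(\pi)$, the feasible set is compact. So a length minimizer $\eta$ exists, and $\len(\eta)\le\len(\pi)<C$.

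\textbf{Standardization of $\eta$.} I would argue by contradiction using local modifications that never increase the segment count. Each of the following modifications can be carried out while preserving the hull or escape.
\begin{itemize}
\item If a vertex $P_i$ is not an extreme point of $H=\conv\eta$, or is an extreme point visited a second time, delete it, i.e. replace $P_{i-1}P_iP_{i+1}$ by the chord $P_{i-1}P_{i+1}$. This strictly shortens the path unless the three points are collinear, and it does not change $H$ when $P_i$ is non-extreme or is a repeat visit. The same applies to non-extreme endpoints, which can simply be removed.
\item If two links cross, the standard uncrossing reverses the middle subpath (the 2-opt move). This keeps the vertex set and so keeps $H$, and strictly decreases length by the triangle inequality. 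Hence a minimizer is simple.
\end{itemize}
Repeating these moves reduces the vertex set to exactly the extreme points of $H$, each visited once. Each move preserves the bound on the number of segments and never increases length. Therefore the constrained minimizer can be taken to be standard, and, after degenerate collinear vertices are discarded, it is still length-minimal among paths with at most $N$ segments.

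\textbf{Main obstacle.} The delicate point is that deletion must preserve the escape property. This is immediate if $H$ is unchanged. But a minimizer could have an extreme vertex that is extreme only in a degenerate way, or collinear extreme points along a hull edge. I would handle these cases by defining extreme vertices to be those of the hull polygon, so that collinear boundary points count as non-extreme and removing them leaves $H$ unchanged. A second subtlety is that the minimizer itself must be taken among paths already achieving minimal length, so the reduction moves must preserve minimality rather than merely feasibility; since length does not increase under them, this holds.
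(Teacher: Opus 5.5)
Your proposal is correct and follows essentially the paper's route. The steps match: a polygonal approximation that keeps an escape margin, compact minimization over vertex tuples with $P_0=0$, deletion of repeated and non-extreme vertices (which fixes the hull and does not increase length), and strict uncrossing, which contradicts length-minimality.

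The differences are cosmetic. You scale before inscribing, whereas the paper inscribes and then rescales by $r_n=\max\{1,2sc/\Phi(\conv\gamma_n)\}$. You also iterate deletions, whereas the paper picks a minimizer with the fewest vertex occurrences. One point to make explicit, as the paper does: apply uncrossing only after the vertices are distinct hull vertices. Then no three are collinear, any meeting of nonadjacent links is a proper crossing, and the triangle inequality is strict.
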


\begin{proof}
Let $\gamma$ be a rectifiable escape path with $\len\gamma<C$.  Choose refining
partitions whose chordal interpolants $\gamma_n$ converge uniformly to
$\gamma$ and whose lengths converge to $\len\gamma$.  Then the convex hulls
converge in Hausdorff distance, hence
$\Phi(\conv\gamma_n)\to\Phi(\conv\gamma)\ge2sc$.
Set
\[
 r_n:=\max\left\{1,\frac{2sc}{\Phi(\conv\gamma_n)}\right\}.
\]
For all sufficiently large $n$ the denominator is positive,
$r_n\to1$, and $r_n\gamma_n$ is a polygonal escape path of length $<C$.
Fix one with at most $N$ segments and length $M<C$.

Translate its initial point to the origin and pad shorter paths by terminal
repetitions.  The set of $(N+1)$-tuples with total length at most $M$, first
vertex $0$, and $\Phi\ge2sc$ is compact: all vertices lie in the radius-$M$
closed disk, length is continuous, and the finite-hull map is Hausdorff
continuous.  Thus length attains a minimum.  Among minimizers choose one with
the smallest number of temporal vertex occurrences after consecutive equal
entries are suppressed.

If one point occurs at two distinct temporal indices, delete either one of
those occurrences and join its two temporal neighbours directly.  The point
remains in the vertex set at its other occurrence, so the convex hull and the
escape constraint are unchanged; the triangle inequality does not increase
length.  This contradicts the secondary choice.  If a retained point is not
an extreme point of the hull, delete its unique occurrence instead.  Its
deletion again leaves the hull unchanged and cannot increase length.  Thus the
remaining temporal points are distinct and are exactly the extreme points of
their hull.

No three distinct extreme points of a planar polygon are collinear.  Hence an
intersection between nonadjacent links of the resulting path, if present, is
a proper crossing.  For links $AB$ and $CD$ occurring in that temporal order,
reverse the intervening block, replacing the two crossed links by $AC$ and
$BD$.  The vertex set and hull are unchanged.  If $X$ is the crossing point,
then
\[
 |AC|+|BD|<(|AX|+|XC|)+(|BX|+|XD|)=|AB|+|CD|,
\]
where strictness follows because no three hull vertices are collinear.  This
contradicts length minimality.  Thus the minimizer is simple and standard.
\end{proof}

\begin{proposition}[Cyclic bitonicity]\label[proposition]{prop:bitonic}
For a standard path through the vertices of a convex polygon, temporal ranks
around the cyclic hull boundary are cyclically bitonic.  If a gap $FT$ is cut
with $F\precT T$, then along either fixed orientation of the complementary hull
boundary the rank sequence has at most three monotone phases; after choosing
the orientation consistently it can be written as decreasing--increasing--decreasing.
\end{proposition}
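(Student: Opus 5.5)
The plan is to prove cyclic bitonicity through a single structural fact about standard paths, which we call \emph{endpoint peeling}. Let $P_0,\dots,P_N$ be a standard path whose vertices are exactly the vertices of the convex polygon $H=\conv\{P_i\}$, each visited once. For every $k$, the set $\{P_0,\dots,P_k\}$ should occupy a contiguous arc of the cyclic hull boundary, and so should its complement $\{P_{k+1},\dots,P_N\}$.

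Equivalently, running time backwards, the terminal vertex $P_N$ is removed from the current hull-vertex cycle, and we must show that the next vertex $P_{N-1}$ is adjacent to the first block in cyclic order. We prove this by contradiction using planarity. Suppose the prefix set $\{P_0,\dots,P_k\}$ is not a contiguous arc. Then it contains two vertices $X,Y$ that separate two later vertices $U,W$ in the cyclic order of the convex polygon. The prefix subpath joins $X$ to $Y$ and the suffix subpath joins $U$ to $W$. These two subpaths are vertex-disjoint chains inside $H$ connecting interleaved boundary points. A Jordan-curve argument, using that the chord $XY$ splits the convex polygon and that both chains stay inside $H$, forces them to cross. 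This contradicts simplicity of the standard path; no three vertices are collinear, so all intersections are proper.

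With endpoint peeling in hand, rank bitonicity follows. Listing ranks around the cycle, each prefix block being contiguous means the rank function has exactly one local minimum arc structure. The block grows by attaching each new vertex at one of its two ends, so cyclically the ranks increase away from $P_0$ in both directions. Hence there is a single cyclic minimum ($P_0$) and a single cyclic maximum ($P_N$), which is cyclic bitonicity. This is the deque picture: each new vertex is pushed at the front or the back.

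For the gap statement, let $FT$ be a hull edge not traversed, with $F\prec T$. Cutting the cycle at this edge gives a linear sequence from $T$ to $F$ along the complementary boundary. A cyclically bitonic sequence, read linearly starting at an arbitrary cut, has at most three monotone phases: the cut can fall inside an increasing run or inside a decreasing run, and these two choices give the two phase patterns. Orient the sequence so that it starts at $F$'s neighbour side appropriately, using $F\prec T$ to fix which end carries the lower rank. The sequence then reads decreasing--increasing--decreasing, where the decreasing prefix or suffix may be empty.

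The main obstacle is the crossing argument in the peeling step. It has to be made rigorous for polygonal chains whose interior vertices are themselves hull vertices, which lie on the boundary rather than in the interior. We will handle this by noting that a polygonal chain between boundary points of a convex polygon, touching the boundary only at vertices, separates $H$ into pieces. Interleaved endpoints on $\partial H$ then must lie in different pieces. Degenerate touching at shared hull vertices is impossible because the vertex sets are disjoint.
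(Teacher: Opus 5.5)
Your proposal is correct and is essentially the paper's argument. The paper also proves the deque (endpoint-peeling) property: the unvisited vertices always form one cyclic interval, and each new vertex is one of its two endpoints. It then reads off cyclic bitonicity and the decreasing--increasing--decreasing form after cutting at $FT$ exactly as you do, with ranks increasing away from $P_0$ in both directions up to $P_N$.

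The one difference is that the paper's crossing step is local. If $P_{k+1}$ is interior to the unvisited interval, the single new chord $P_kP_{k+1}$ has remaining vertices on both sides, and the first later link that switches sides crosses it properly. This sidesteps the Jordan-curve separation you flag as the main obstacle. Your global two-chain version reduces to the same thing: some link of the $U$--$W$ chain has endpoints separated by $X,Y$, then some link of the $X$--$Y$ chain has endpoints separated by that link's endpoints, and two chords with interleaved endpoints must cross.
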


\begin{proof}
For a simple Hamiltonian path through points in convex position, after deleting
the already visited vertices the unvisited vertices form one cyclic interval.
The next path vertex must be one of the two endpoints of that interval;
otherwise the first new chord separates remaining vertices on both sides and a
later path link crosses it.  Thus the temporal order is a deque order.  The two
endpoint-deletion streams are monotone in opposite cyclic directions and meet
at the final vertex, which is precisely cyclic bitonicity.  Cutting at a gap
produces the stated three-phase form once the boundary orientation is fixed.
\end{proof}

\subsection{Selected Lambda gap}

We use the Lambda-configuration theorem for simple arcs
\cite{AlexanderWetzelWichiramala,MovshovichWetzelFrames} and the selected-gap
surgery developed in \cite[Appendix A]{TemerevDoria}.  The hypotheses needed
for the surgery are recorded explicitly here.

\begin{proposition}[Selected $\Lambda$-gap]\label[proposition]{prop:lambda-gap}
Let $\eta$ be a nonconvex standard polygonal escape path that is length-minimal
among escape paths with at most $N$ segments.  Then there exist a gap $FT$, an
opposite hull vertex $M$, and two parallel support lines at distance $h>0$
such that, after a rigid normalization,
\[
 F=(x_F,0),\quad T=(x_T,0),\quad M=(x_M,h),\qquad
 x_F\le x_T,\qquad F\precT M\precT T,
\]
and
\begin{equation}\label{eq:lambda-height}
 \len(\eta)\ge(1+\sqrt2)h.
\end{equation}
\end{proposition}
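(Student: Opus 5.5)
The plan follows the Temerev--Doria selected-gap surgery, taking the Lambda-configuration theorem for simple arcs \cite{AlexanderWetzelWichiramala,MovshovichWetzelFrames} as the external geometric input. Let $H=\conv\eta$. Because $\eta$ is standard (\Cref{prop:standardize}) and nonconvex, the bitonicity structure of \Cref{prop:bitonic} shows that $H$ has at least two gaps. At least one of them is not the chord joining the two temporal endpoints. The reason is this: a standard path that skips only the endpoint chord runs along a single hull-boundary chain, so it would be convex.

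\textbf{Selecting the gap.} I would choose a gap $FT$, oriented so that $F\precT T$, for which the path has a vertex strictly between $F$ and $T$ in temporal order. The deque order of \Cref{prop:bitonic} guarantees such a gap exists. After removing the visited cyclic interval, the path must return to the far side of the hull before it finishes. So some hull edge $FT$ is skipped while the path makes an excursion to vertices on the opposite side.

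\textbf{Normalization.} Rotate and translate so that the line through $F$ and $T$ is the $x$-axis, $H$ lies in $\{y\ge0\}$, and $x_F\le x_T$; the last condition can be arranged by reflection, which the problem permits. Then let $M$ be a vertex of $H$ of maximal height $h>0$, so that $y=0$ and $y=h$ are the two parallel support lines. The next task is to secure $F\precT M\precT T$. If the highest vertex is not visited between $F$ and $T$, I would switch to a different gap. Concretely, I would pick $FT$ among the gaps to minimize the temporal interval length, or equivalently take the innermost gap in the deque nesting. The highest point relative to that gap's line then lies inside the excursion, because every vertex outside the excursion is adjacent along the hull boundary on the same side.

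\textbf{The length inequality.} The path contains the subpath $F\to M\to T$ together with the two tails: the part before $F$ and the part after $T$. Since $FT$ is a gap and the path is minimal, the Lambda theorem controls the length. The idea is to compare $\eta$ with the surgered path in which a subarc is replaced. Minimality together with the escape constraint $\Phi(H)\ge2sc$ forbids shortening. The Lambda-configuration theorem states that a simple arc whose hull is spanned at width $h$ in this $\Lambda$ pattern has length at least $(1+\sqrt2)h$. The extremal arc has one segment along the base and two legs meeting at $45^\circ$: one bounded portion of the arc runs along the base, and the other has to climb to $M$ and return. The bound \eqref{eq:lambda-height} then follows by projecting the segments onto the normal $u$ and onto a diagonal direction, and optimizing over the diagonal, which yields the constant $1+\sqrt2$.

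\textbf{Main obstacle.} I expect the hardest step to be making the selection of the gap and of $M$ compatible with the hypotheses of the external Lambda theorem, in particular showing that the tails cannot make the length smaller than $(1+\sqrt2)h$ when $M$ is only the highest vertex. My first attempt would be to reduce to that theorem directly by showing that $\eta$ must also reach the base line on both sides of $M$ in time. This holds because $F$ and $T$ lie on $y=0$, $F\precT M\precT T$, and one tail has to contribute horizontal extent.
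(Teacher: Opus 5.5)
Your proposal has two genuine gaps, one in each substantive step.

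\emph{Gap selection.} Your concrete rule is: take the gap with the shortest temporal interval and let $M$ be the highest vertex over its line. This rule does not produce $F\precT M\precT T$. Take the convex pentagon $F=(0,0)$, $T=(1,0)$, $R=(1.1,0.5)$, $G=(0.3,1)$, $L=(-0.2,0.5)$ and the simple deque path $G\to L\to F\to R\to T$. Its only gaps are $FT$ (times $2,4$) and $GR$ (times $0,3$), so your rule selects $FT$. But the highest vertex over $FT$ is $G$, which is visited before $F$. The $\Lambda$ pattern here sits on the other gap $GR$, with $F$ as the opposite contact. Vertices visited before $F$ or after $T$ can be the farthest ones. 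Your fallback ``switch to a different gap'' comes with no argument that some gap works, and that existence is exactly the nontrivial content of the external theorem. The paper does not build the configuration by hand. It takes from the $\Lambda$-configuration theorem two parallel support lines, with two contacts on one line and a temporally intermediate contact on the other. It then uses standardness to show that the exposed face on the first line is an omitted hull edge: if the path traversed it, the whole subarc between the two contacts would stay on that line, contradicting the intermediate opposite contact.

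\emph{Length bound.} The $\Lambda$-configuration theorem is an existence statement and gives no bound $(1+\sqrt2)h$. No such bound holds merely because an arc exhibits $F\precT M\precT T$ between support lines at distance $h$. Consider the path $(-0.01,0.01)\to F=(0,0)\to M=(0.1,1)\to T=(0.2,0)$. It is simple, standard and nonconvex, has $FT$ as a gap with $F\precT M\precT T$ and $h=1$, and has length about $2.02<1+\sqrt2$. So your projection and diagonal-optimization sketch, which never uses minimality, cannot prove the inequality. The bound comes from the Temerev--Doria two-gap (outer-cap shortening) estimate applied to the segment-count minimizer, and you must verify its two hypotheses, neither of which appears in your outline.
\begin{itemize}
\item The escape class is monotone under hull enlargement. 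The outer-cap replacement enlarges the hull, and every support value in the isosceles criterion can only increase, so the shortened path still escapes.
\item At least one of $F,T$ is not a temporal endpoint. The paper proves this with the deque property: if, after reorienting, $F$ were first and $T$ last, then $T$ must stay unvisited until the final step. That forces every step to the other end of the unvisited interval, so the path follows the complementary hull chain and is convex.
\end{itemize}
Your remark that ``one tail has to contribute horizontal extent'' does not substitute for either hypothesis.
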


\begin{proof}
The standard nonconvex path is a simple open polygonal arc of positive width.
Apply the $\Lambda$-configuration theorem \cite[Theorem A.6]{TemerevDoria}
and the two-gap estimate \cite[Lemma A.5]{TemerevDoria}; the same selected-gap
assembly is recorded in \cite[Lemma A.7]{TemerevDoria}.  The two-gap estimate
applies to a segment-count minimizer in any admissible class that is monotone
under convex-hull enlargement; the escape class has exactly this monotonicity by
\eqref{eq:iso-support}.  They provide two distinct contacts on one support
line and an intermediate temporal contact on the opposite support line.  Standardness implies that no path segment lies in the first support
line: otherwise that segment would be the exposed hull edge and the entire
temporal subarc between the two contacts would remain on the line, contrary to
the opposite contact.  Hence the exposed face is an omitted hull edge $FT$,
and after relabelling $F\precT M\precT T$.

Finally, reflect in a vertical line if necessary so that the spatial gap
orientation is $x_F\le x_T$.  If that reflection reverses the desired temporal
naming, reverse the path parameter and exchange the endpoint names.  Both
operations preserve length, standardness, escape, and the selected
$\Lambda$-configuration.  Thus the two displayed conventions hold
simultaneously.

At least one of $F,T$ is not a temporal endpoint of the path.  Indeed, suppose
both were endpoints.  Since $FT$ is a hull edge, $F$ and $T$ are adjacent hull
vertices.  Relabel temporal orientation so that $F$ is first and $T$ is last.
After deleting $F$, the unvisited vertices form one cyclic interval by
\Cref{prop:bitonic}; one endpoint of that interval is $T$.  Because $T$ must
remain unvisited until the final step, the next vertex is forced to be the
other endpoint.  Repeating this argument inductively forces the path to follow
the entire complementary hull-boundary chain from $F$ to $T$, contradicting
nonconvexity.  Thus the endpoint hypothesis required by the two-gap outer-cap
shortening lemma is satisfied.

The outer-cap replacement used in the cited two-gap lemma remains an escape
path because enlarging the convex hull only increases every support value in
\eqref{eq:iso-support}.  Segment-count minimality therefore applies exactly as
required, and \cite[Lemma A.5]{TemerevDoria} gives
\eqref{eq:lambda-height}.
\end{proof}

\section{Corrected high-angle metric estimate}\label{sec:metric}

Put
\begin{equation}\label{eq:D}
 D:=\frac{82074390584}{84861020075}=0.9671624323094728\ldots<1.
\end{equation}
Suppose henceforth that a nonconvex standard minimizer has length $L<D$ and
choose the gap of \Cref{prop:lambda-gap}.  Since $\sqrt2>7/5$,
\begin{equation}\label{eq:hbound}
 h<\frac{D}{1+\sqrt2}<\frac5{12}<\rho.
\end{equation}
Place the side $y=0$ of \eqref{eq:Delta-halfplanes} on the gap and translate
horizontally until $x=Ay$ supports the hull.  Choose any path vertex $B$ in
that exposed support face and write
\[
 B=(Ay_B,y_B),\qquad 0\le y_B\le h.
\]
(If the exposed face is an edge, either endpoint may be chosen.)

\begin{lemma}[The opposite side is met]\label[lemma]{lem:D0-meeting}
There is a point $D_0$ of the path (not necessarily a support point) satisfying
\[
 D_0=(1-by_D,y_D),\qquad 0\le y_D\le h.
\]
\end{lemma}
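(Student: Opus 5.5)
The plan is to argue by contradiction from the escape property: if the path never meets the third side $x+by=1$ inside the strip $0\le y\le h$, then the hull fits in the interior of a translate of $\Delta$, which contradicts escape. The normalization already puts the gap on $y=0$, so the whole hull lies in the strip $0\le y\le h$, by the two parallel support lines of \Cref{prop:lambda-gap}. The horizontal translation makes $x=Ay$ support the hull, so every path point satisfies $x\ge Ay$. By \eqref{eq:hbound}, $h<\rho$, and $\rho$ is exactly the height of the apex $(\cos\delta,\sin\delta)$ of $\Delta$. Hence the strip $0\le y\le h$ meets $\Delta$ in a genuine trapezoid, on which the third side is the segment $x=1-by$, $0\le y\le h$.

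\textbf{Step 1 (intermediate value).} Consider the continuous function $g(P)=x_P+by_P-1$ along the path. It is enough to find one path point with $g\le0$ and one with $g\ge0$: then, by connectedness, some path point $D_0$ has $g(D_0)=0$, and automatically $0\le y_D\le h$. For the point with $g\le 0$, take the vertex $B=(Ay_B,y_B)$. Then $g(B)=(A+b)y_B-1=y_B/\rho-1\le h/\rho-1<0$ by \eqref{eq:ABC-rel} and \eqref{eq:hbound}.

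\textbf{Step 2 (some point has $g\ge0$).} Suppose instead that $g<0$ on the entire path, which is compact, so $g\le-\epsilon$ there. Every path point then satisfies $y\ge0$, $x\ge Ay$ and $x+by<1$. The hull therefore lies in $\Delta$, although it may touch the sides $y=0$ and $x=Ay$. Now use the slack in the third constraint to shift inward. Translate by a small vector $w$ pointing strictly into the cone $\{y>0,\ x>Ay\}$, for instance $w=t(A+1,1)$ with $t>0$ small. This makes the first two constraints strict. The third constraint changes by $t(A+1+b)$, which stays below $\epsilon$ for small $t$. The translated path then lies in $\interior\Delta$. Since $\Delta$ is congruent to $T$, this contradicts the assumption that the path is an escape path.

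\textbf{Main obstacle.} The delicate point is the compatibility in Step 2: the inward shift must not destroy the strict inequality on the third side. This holds because $g$ has a uniform negative bound on the compact path. A further check is that $\Delta$ really is a rigid copy of $T$, with equal unit sides and apex angle $\delta=\pi-2\beta$, so that escape from $T$ forbids placement in $\interior\Delta$. Once these are confirmed, the lemma follows without any use of the ledger or of the length bound beyond $h<\rho$. Even that bound is needed only to show $g(B)<0$ when $B$ sits at the top of the strip.
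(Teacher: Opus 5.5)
Your proposal is correct and follows the paper's proof essentially step for step. It uses the same affine clearance function (you take its negative), the same inward translation by $t(A+1,1)$ with $(A+1+b)t$ kept below the uniform clearance to contradict escape from the congruent copy $\Delta$, and the same intermediate-value step anchored at $B$ via $y_B/\rho\le h/\rho<1$.
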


\begin{proof}
Define the affine clearance
$r(x,y):=1-x-by$.  At $B$, using \eqref{eq:ABC-rel},
\[
 r(B)=1-(A+b)y_B=1-\frac{y_B}{\rho}>0
\]
by \eqref{eq:hbound}.  If $r>0$ on the entire compact path, then
$\eps:=\min r>0$.  Choose $\Delta y>0$ so small that
$(A+1+b)\Delta y<\eps$ and set $\Delta x=(A+1)\Delta y$.
After translating the path by $(\Delta x,\Delta y)$ one has
\[
 y+\Delta y>0,
\quad (x+\Delta x)-A(y+\Delta y)=(x-Ay)+\Delta y>0,
\]
and
\[
 1-(x+\Delta x)-b(y+\Delta y)
 =r(x,y)-(A+1+b)\Delta y>0.
\]
Thus the translated path lies in $\interior\Delta$, contradicting escape.
Therefore some path point has $r\le0$.  Connect that point to $B$ along the
path; continuity of $r$ and $r(B)>0$ gives a point $D_0$ with $r(D_0)=0$.
The strip support gives $0\le y_D\le h$.
\end{proof}

The distinction in \Cref{lem:D0-meeting} is essential: $B$ is a genuine
support contact, whereas $D_0$ need only be a crossing of the third side.

\begin{lemma}[Chord-dual estimate]\label[lemma]{lem:chord-dual}
If $X_0\precT X_1\precT\cdots\precT X_m$ are points of a rectifiable path and
$\|q_i\|\le1$, then
\[
 \len(\Gamma)\ge\sum_{i=0}^{m-1}q_i\cdot(X_{i+1}-X_i).
\]
\end{lemma}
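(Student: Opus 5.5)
The plan is to reduce the estimate to the triangle inequality for lengths of subarcs, followed by Cauchy--Schwarz on each chord. Let $\Gamma$ be parametrized by $\gamma:[a,b]\to\RR^2$, and choose parameters $t_0\le t_1\le\cdots\le t_m$ with $\gamma(t_i)=X_i$. The temporal order $X_0\precT\cdots\precT X_m$ means exactly that such a nondecreasing choice exists. If a point is visited more than once, I fix any choice of parameters compatible with the stated order.

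The first step is to bound each chord by the length of the subarc it spans. For each $i$ let $\Gamma_i=\gamma([t_i,t_{i+1}])$. The subarcs $\Gamma_0,\dots,\Gamma_{m-1}$ have pairwise disjoint parameter interiors, so additivity of arc length over a partition of the parameter interval gives
\[
 \len(\Gamma)\ge\sum_{i=0}^{m-1}\len(\Gamma_i).
\]
The sum may fall short of $\len(\Gamma)$ only because the portions before $t_0$ and after $t_m$ are omitted. A rectifiable arc is at least as long as its chord, because the chord is the one-interval inscribed polygon in the definition of length. Hence $\len(\Gamma_i)\ge\|X_{i+1}-X_i\|$.

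The second step applies Cauchy--Schwarz to each term: $q_i\cdot(X_{i+1}-X_i)\le\|q_i\|\,\|X_{i+1}-X_i\|\le\|X_{i+1}-X_i\|$. Summing over $i$ and combining with the first step proves the claim.

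The only point needing care is the meaning of $\precT$ for points of a general rectifiable path that are not vertices. I will interpret it parametrically, as described above, so that repeated visits cause no ambiguity. No step presents a real obstacle. This is the continuous analogue of the Cauchy--Schwarz step in \Cref{lem:ledger}, and it is what allows non-support points such as $D_0$ from \Cref{lem:D0-meeting} to be used as interpolation points in later bounds.
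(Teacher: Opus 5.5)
Your proposal is correct and follows essentially the same route as the paper: split the path into temporally consecutive subarcs, bound each subarc's length below by its chord, apply Cauchy--Schwarz with $\|q_i\|\le1$, and sum. Your parametric reading of $\precT$ and the remark that the subarcs are disjoint only up to shared endpoints (disjoint parameter interiors) state precisely what the paper's one-line proof uses implicitly.
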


\begin{proof}
The temporal subarcs $X_i\rightsquigarrow X_{i+1}$ are disjoint, and each has
length at least $\|X_{i+1}-X_i\|\ge q_i\cdot(X_{i+1}-X_i)$.  Sum.
\end{proof}

\begin{proposition}[One-sided high-angle exclusion]\label[proposition]{prop:one-sided}
With the metric normalization above, every path-vertex contact $B$ of the
supporting face $x=Ay$ satisfies $B\precT T$.
\end{proposition}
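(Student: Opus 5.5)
We argue by contradiction.  Suppose some path-vertex contact $B=(Ay_B,y_B)$ of the face $x=Ay$ satisfies $T\precT B$.  The case $B=T$ is excluded directly.  It would force $y_B=0$ and $x_T=0$, hence $x_F\le0$, while the whole hull lies in $x\ge Ay\ge0$; that gives $F=T$, contradicting that $FT$ is a gap.  Together with \Cref{prop:lambda-gap} we then have the temporal chain
\[
 F\precT M\precT T\precT B,
\]
and the goal is to show that the resulting path length is at least $D$, contradicting $L<D$.

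\textbf{Step 1: the chord estimate.}  The main tool is \Cref{lem:chord-dual}, applied to a temporally ordered selection of the points $F,M,T,B$ together with the crossing point $D_0=(1-by_D,y_D)$ of \Cref{lem:D0-meeting}.  The geometry supplies the following data:
\begin{itemize}[leftmargin=1.5em]
\item $F,T$ lie on $y=0$ with $0\le x_F\le x_T\le1$;
\item $M$ lies at height $h$;
\item $B$ lies on the left side, with $x_B=Ay_B\le Ah$;
\item $D_0$ lies on the right side, with $x_{D_0}=1-by_D\ge1-bh$;
\item all these points lie in the strip $0\le y\le h$.
\end{itemize}
Thus the path makes a vertical excursion $F\to M\to T$ of total rise and fall $2h$.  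It also must span horizontally from the right side to the left side, a span of at least $1-(A+b)h=1-h/\rho$ by \eqref{eq:ABC-rel}.

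\textbf{Step 2: case split on the position of $D_0$.}  We split according to where $D_0$ falls in the chain:
\begin{enumerate}[label=(\roman*),leftmargin=2em]
\item $D_0\precT F$;
\item $F\precT D_0\precT M$;
\item $M\precT D_0\precT T$;
\item $T\precT D_0\precT B$;
\item $B\precT D_0$.
\end{enumerate}
In each case we choose unit vectors $q_i$ tilted so that they capture both the vertical component $\pm h$ and the horizontal displacement, in the spirit of the Temerev--Doria exclusion estimates.  For example, in case (i) the chain $D_0\to F\to M\to T\to B$ travels right-to-left overall while rising and falling by $h$ twice.  Taking $q_i=(-\cos\theta,\pm\sin\theta)$ with a common angle $\theta$ then gives
\[
 L\ge (1-h/\rho)\cos\theta+2h\sin\theta
\]
after telescoping the horizontal parts.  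Cases in which $B$ precedes $D_0$ are not automatic and must be handled separately; they are not reduced to (i) by reversal.  In the key configuration $T\precT B$ with $D_0$ late, the path must return from the far left contact $B$ to the right side after already having visited $T$.  That return costs an additional horizontal traversal, giving a bound at least as strong as in case (i).

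\textbf{Step 3: combine with the $\Lambda$-height bound.}  Optimizing over $\theta$ in each case yields a bound of the form
\[
 L\ge f(h)=\sqrt{(1-h/\rho)^2+(2h)^2},
\]
or a stronger one.  We combine this with \eqref{eq:lambda-height}, $L\ge(1+\sqrt2)h$, and minimize $\max\{(1+\sqrt2)h,f(h)\}$ over $0<h\le D/(1+\sqrt2)$.  Since $f$ is near $1$ for small $h$ and \eqref{eq:hbound} caps $h$, it remains to check in exact rational arithmetic, with $\rho=301804/625565$ and a rational lower bound for $\sqrt2$, that this minimum exceeds $D$.

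\textbf{Main obstacle.}  I expect the difficulty to lie in the middle cases (ii)--(iv), where $D_0$ sits between the gap endpoints and $B$.  There the naive horizontal span $1-h/\rho$ is small near the critical height $h\approx0.40$, since $h/\rho\approx0.83$.  The estimate must therefore extract more than the bare excursion $2h$: I would try to use the full horizontal positions of $F$ and $T$ relative to $D_0$ and $B$, rather than only their heights.  If a single common tilt $\theta$ is insufficient, I would allow different tilts on each chord, which makes the optimization a small convex problem solvable exactly at the worst-case $h$.
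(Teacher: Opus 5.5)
Your split into five cases by the position of $D_0$ matches the paper's five orders, but the estimate you use to close them fails, so there is a genuine gap.

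\textbf{Step 3 is false.} It claims that $\max\{(1+\sqrt2)h,\ f(h)\}$, with $f(h)=\sqrt{(1-h/\rho)^2+4h^2}$, exceeds $D$ on the admissible range $0<h<D/(1+\sqrt2)$. But $f$ is minimized near $h=1/4$, where $f(1/4)=\sqrt{(1-\frac{625565}{1207216})^2+\frac14}\approx0.694$ and $(1+\sqrt2)/4\approx0.604$. Both are far below $D\approx0.967$.

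So the common-tilt projection already fails in case (i), the one case you computed. The remark that the late-$D_0$ cases give a bound at least as strong therefore rescues nothing. The loss is not just the replacement of $x_{D_0}-x_B$ by $1-h/\rho$. Even keeping the exact value $1-by_D-Ay_B$, a common tilt with $y_D=0$, $y_B=h$ yields only $\sqrt{(1-Ah)^2+9h^2}$, which dips to about $0.856$ near $h\approx0.15$.

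\textbf{The missing idea is a chord-by-chord calibration.} The vectors must be adapted to the two side lines and to the spatial convention $x_F\le x_T$, which your argument never uses.

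\emph{Case (i).} The paper takes $(-a,-ba)$, $(0,1)$, $(0,-1)$, $(-a,ba)$ with $a=9672/10000$.
\begin{itemize}
\item The first vector is a multiple of the right-side normal $(1,b)$, so the unknown $y_D$ cancels exactly and the chord contributes $a(1-x_F)$.
\item The middle chords are calibrated purely vertically and collect the full $2h$.
\item The leftover horizontal term is $a(x_T-x_F)\ge0$.
\item The $B$-term satisfies $a(b-A)y_B\ge a(b-A)h$ and is absorbed because $2+a(b-A)>0$.
\end{itemize}
This gives $L\ge a>D$, uniformly in all coordinates, with no further use of the height bound.

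\emph{$D_0$ strictly between $F$ and $T$.} The vectors are $(X,P)$, then $(X,2bX-P)$ or $(-X,P-2bX)$, then $(-X,-P)$ and $(-2X,Q)$ with $Q=2X/\rho-2P$. They are chosen so that the coefficients of $y_D$ and $x_M$ vanish, leaving $L\ge 2X+X(x_T-x_F)+2(P-bX)(h-y_B)\ge2X>D$.

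\emph{$D_0$ after $T$.} Here you should not project at all. The subarcs through $F,M,T$ and the subarc joining $B$ and $D_0$ are disjoint, so their lengths add: $L\ge 2h+(1-h/\rho)$. Only here does the $\Lambda$-bound enter, via $h<\frac5{12}D$, giving $L>1-\frac{36595}{1207216}D>D$.

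Your fallback of per-chord tilts optimized at a worst-case $h$ points in this direction. However, a single-$h$ optimization is not a proof; the exact cancellations above are what make the bound uniform in $h,y_B,y_D,x_F,x_T,x_M$.

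A minor point: the item $x_T\le1$ in your data list is unjustified, since an escape hull need not lie left of the third side. You do not need it, though.
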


\begin{proof}
Assume $T\preceqT B$.  Since $F\precT M\precT T$, inserting $D_0$ gives, up
to weak coincidences, exactly five orders.

For
\[
 F-M-D_0-T-B,\qquad F-D_0-M-T-B,
\]
put
\[
 X=\frac{483585}{10^6},\quad P=\frac{875291}{10^6},\quad
 Q=\frac{38347525561}{150902000000}=\frac{2X}{\rho}-2P.
\]
Use respectively
\[
 q_0=(X,P),\quad q_1=(X,2bX-P),\quad q_2=(-X,-P),\quad q_3=(-2X,Q),
\]
and in the second order replace $q_1$ by $(-X,P-2bX)$.  Exact rational
arithmetic gives
\[
 X^2+P^2<1,\quad X^2+(2bX-P)^2<1,\quad (2X)^2+Q^2<1,
\quad P-bX>0.
\]
Expanding \Cref{lem:chord-dual} gives in either case
\[
 L\ge2X+X(x_T-x_F)+2(P-bX)(h-y_B)\ge2X>D,
\]
where
\[
 2X-D=\frac{2568807751}{339444080300000}>0.
\]

For the third order $D_0-F-M-T-B$, put $a=9672/10000$ and use
\[
 (-a,-ba),\quad(0,1),\quad(0,-1),\quad(-a,ba).
\]
The only nontrivial norm/sign checks are
\[
 a^2(1+b^2)<1,\qquad 2+a(b-A)>0.
\]
The expansion is
\[
 L\ge a+a(x_T-x_F)+2h+a(b-A)y_B.
\]
Since $b-A<0$ and $y_B\le h$,
\[
 L\ge a+a(x_T-x_F)+[2+a(b-A)]h\ge a>D,
\]
with
\[
 a-D=\frac{159401627}{4243051003750}>0.
\]

The last two orders are
$F-M-T-D_0-B$ and $F-M-T-B-D_0$.  The disjoint subchain through $F,M,T$ has
length at least $2h$, while
\[
 x_{D_0}-x_B\ge1-(A+b)h=1-\frac h\rho>0.
\]
Hence $|BD_0|\ge1-h/\rho$ and
\[
 L\ge1-\left(\frac1\rho-2\right)h.
\]
Under $L<D$ and \eqref{eq:hbound},
$h<(5/12)D$.  Since
\[
 \frac5{12}\left(\frac1\rho-2\right)=\frac{36595}{1207216},
\]
we obtain
\[
 L>1-\frac{36595}{1207216}D>D,
\]
where the final margin is
\[
 1-\frac{36595}{1207216}D-D
 =\frac{597327047}{169722040150}>0.
\]
Every possible location of $D_0$ is covered, a contradiction.
\end{proof}

\subsection{Far anchors, inner anchors, and delimiter gaps}
\label{subsec:near-anchor}

The one-sided estimate above is attached to a definite calibration direction.
Put
\[
 \kappa:=s^2-c^2>0,\qquad
 Z_2:=\zeta_L:=(-\rho,\kappa),\qquad
 Z_1:=\zeta_R:=(\rho,\kappa).
\]
Since $A=\kappa/\rho$ and $\rho^2+\kappa^2=1$, the normalized outward normal
of the supporting side $x=Ay$ is
\[
 \frac{(-1,A)}{\sqrt{1+A^2}}=(-\rho,\kappa)=Z_2.
\]
Thus \Cref{prop:one-sided} proves the far-endpoint inequality for the left
anchor direction.

\begin{corollary}[The two far-endpoint inequalities]
\label[corollary]{cor:reflected-far}
For the selected gap one may choose contacts $Z_2,Z_1$ of the two anchor faces
such that
\[
 Z_2\precT T,\qquad F\precT Z_1.
\]
\end{corollary}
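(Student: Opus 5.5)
The first inequality $Z_2\precT T$ is \Cref{prop:one-sided} restated. As computed just before the corollary, the outward unit normal of the supporting side $x=Ay$ is $Z_2=(-\rho,\kappa)$. Any path-vertex contact of that face is a vertex maximizing $x\cdot Z_2$ over the hull, and \Cref{prop:one-sided} shows every such contact precedes $T$. So I would choose for $Z_2$ any vertex in the exposed face; if the face is an edge, either endpoint works, matching the freedom allowed in the ledger of \Cref{lem:ledger}.

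For the second inequality I would use the reflection/time-reversal symmetry of the configuration. Let $\sigma$ be reflection in a vertical line, combined with reversing the path parameter. The triangle $T$ is isosceles, so $\sigma$ maps escape paths to escape paths. It preserves length, standardness and nonconvexity, and segment-count minimality is preserved because $\Phi$ is reflection invariant. The gap $FT$ remains on the line $y=0$, but its spatial orientation flips.

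After time reversal, the former $T$ is temporally first, so I would rename it $F'$, and rename the former $F$ as $T'$. Then $x_{F'}\le x_{T'}$ holds again. We also get $F'\precT' M\precT' T'$ in the reversed order, with the same height $h$. So the reflected path is again in the normalized configuration of \Cref{prop:lambda-gap}, including \eqref{eq:lambda-height}. The side $x+by=1$ of $\Delta$ reflects to a side of slope $x=Ay$ type, and the reflected copy of $\Delta$ is still the congruent unit-side realization with the same rational data $\rho,A,b$. The anchor face with normal $Z_1=(\rho,\kappa)$ maps to the face with normal $Z_2=(-\rho,\kappa)$.

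Applying \Cref{prop:one-sided} to the reflected, reversed path then gives $\sigma(Z_1)\precT' T'$. Undoing the reversal, $T'$ corresponds to $F$ and the order flips, so $F\precT Z_1$.

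The step I expect to need the most care is the normalization bookkeeping. The support face for $Z_1$ is defined with respect to $\Delta$ placed against the gap and then translated horizontally until the anchor side touches. I must check that reflecting this placement gives exactly the placement used in \Cref{prop:one-sided}. The subtle point is that both anchor faces of $\Delta$ must be realized simultaneously: the normals $Z_1,Z_2$ are directions, so the two contacts are just the hull maximizers in those directions, independent of translation. The one-sided argument, including \Cref{lem:D0-meeting}, uses only the hull, the strip of width $h$, and the temporal data. It therefore applies verbatim to each direction separately.
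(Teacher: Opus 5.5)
Your proposal follows the paper's proof. The first inequality is \Cref{prop:one-sided} itself. The second comes from applying \Cref{prop:one-sided} to the reflected, time-reversed path, with $F'=\mathcal R(T)$ and $T'=\mathcal R(F)$, and pulling back via $\mathcal R Z_1=Z_2$, exactly as you do.

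One aside is false, though the argument does not depend on it. The side $x+by=1$ of $\Delta$ has outward normal $(\sin\beta,\cos\beta)$, which differs from $Z_1=(\rho,\kappa)$. So that side does not reflect to an $x=Ay$-type side, and since $\delta\neq\beta$, $\mathcal R\Delta$ is not a translate of $\Delta$. In fact the $Z_1$-face is not supported by any side of $\Delta$ in the standard placement. What your argument actually needs, and uses correctly, is only that the original $\Delta$ is re-placed against the reflected hull and that $\mathcal R$ carries the hull's $Z_1$-face onto its $Z_2$-face.
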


\begin{proof}
The first inequality is \Cref{prop:one-sided}.  For the second, let
$\mathcal R(x,y)=(-x,y)$, reflect the normalized hull, and reverse the temporal
parameter.  The transformed path is a standard escape minimizer with the same
length and selected gap
\[
 F'=\mathcal R(T),\qquad T'=\mathcal R(F),
\]
with $F'\prec_{\rm rev}M'\prec_{\rm rev}T'$ and $x_{F'}\le x_{T'}$.
Apply \Cref{prop:one-sided} to the transformed $Z_2$-face.  Reflection sends
the original $Z_1$-face to that face, and temporal reversal reverses the
inequality, giving $F\precT Z_1$.
\end{proof}

\begin{lemma}[Weak fan order of the anchor faces]
\label[lemma]{lem:weak-fan-order}
Orient the complementary hull boundary clockwise from $F$ through the upper
support face to $T$.  The contacts may be selected so that
\[
 F\le_{\rm fan} Z_2\le_{\rm fan} M\le_{\rm fan} Z_1\le_{\rm fan} T.
\]
Equalities are allowed when two directions belong to one vertex normal cone.
\end{lemma}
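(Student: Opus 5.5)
The plan is to read the fan order directly from the outward normals of the convex hull $H$ in the normalized picture of \Cref{prop:lambda-gap}.

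\emph{Setup.} The gap $FT$ lies on the line $y=0$, and $H$ lies in the strip $0\le y\le h$. The outward normal of the gap edge is therefore $(0,-1)$. The upper support line $y=h$ contains $M$ and has outward normal $(0,1)$.

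\emph{Clockwise traversal.} Traverse the complementary hull boundary clockwise from $F$ through the upper face to $T$; in the normalized picture this boundary runs from $F$ to $T$ through $M$. The outward normal then turns monotonically and sweeps the closed arc of directions from the gap normal $(0,-1)$ around to $(0,-1)$ again. This arc passes through $(-1,0)$ and $(0,1)$ and then $(1,0)$. The sweep is counterclockwise, with left-facing normals first, because the left part of the boundary is met first.

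\emph{Position of the two anchor directions.} We have $Z_2=(-\rho,\kappa)$ with $\rho,\kappa>0$, so $Z_2$ lies strictly between $(-1,0)$ and $(0,1)$ in angle. Similarly $Z_1=(\rho,\kappa)$ lies strictly between $(0,1)$ and $(1,0)$. So along the sweep the direction order is
\[
(0,-1)\ \text{(at }F\text{)},\quad Z_2,\quad (0,1)\ \text{(at }M\text{)},\quad Z_1,\quad (0,-1)\ \text{(at }T\text{)}.
\]

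\emph{Converting directions to contacts.} Each direction $u$ has an exposed face $F_H(u)$, which is a vertex or an edge. For angle-ordered directions $u\le u'$ in the sweep, every point of $F_H(u)$ precedes or equals every point of $F_H(u')$ along the boundary, except that the two faces may share a vertex. Choose the contacts as follows. For $Z_2$, take the vertex of the $Z_2$-face. If that face is an edge, take the endpoint used in \Cref{prop:one-sided}. That proposition allows either endpoint, so one may pick the endpoint that keeps the chain monotone. Do the same for $Z_1$, using the reflected statement \Cref{cor:reflected-far}.

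The point $F$ is the left endpoint of the gap. It is the clockwise-first vertex, since $x_F\le x_T$ and the boundary leaves $F$ going upward on the left. Hence $F$ is fan-minimal. By the same reasoning $T$ is fan-maximal. The point $M$ lies on the face exposed by $(0,1)$. If that face is an edge, the chosen $Z_2$ contact is at or before its left end and the $Z_1$ contact is at or after its right end, so $M$ sits between them.

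\emph{Equalities.} Equality occurs exactly when two directions lie in the normal cone of one vertex. For example, $F$ itself may be the $Z_2$-contact if its normal cone contains both $(0,-1)$ and $Z_2$.

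\emph{Main obstacle.} The delicate step is choosing face contacts consistently. The same choices must satisfy both the fan chain here and the temporal inequalities $Z_2\prec T$ and $F\prec Z_1$ of \Cref{cor:reflected-far}. There is no conflict, because \Cref{prop:one-sided} holds for \emph{every} path-vertex contact of the face. So any endpoint choice preserves the temporal statement, and we are free to choose the one that keeps the fan order monotone.
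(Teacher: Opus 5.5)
Your proposal is correct and is essentially the paper's proof: you order the directions $d=(0,-1),\,Z_2,\,(0,1),\,Z_1,\,d$ along the normal sweep of the complementary chain and use that exposed faces move monotonically with the outer-normal angle, so choosing a vertex from each face gives the weak chain. Any vertex choice works, so your endpoint-selection step is harmless but not needed; one wording slip is that the sweep $(0,-1)\to(-1,0)\to(0,1)\to(1,0)$, which you list correctly, is a \emph{clockwise} rotation of the normal (the paper's ``clockwise normal sweep''), not a counterclockwise one.
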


\begin{proof}
The selected gap has outward normal $d=(0,-1)$.  Along the clockwise normal
sweep its relevant directions occur as
\[
 d,\quad Z_2,\quad (0,1),\quad Z_1,\quad d.
\]
Exposed faces of a convex polygon vary monotonically with outer-normal angle.
Inside a vertex normal cone the contact may stay at the same vertex; between
cones it runs along the intervening edge.  Choosing a vertex from each face
gives the weak order.
\end{proof}

\begin{remark}[Former near-anchor hypothesis; now discharged]
\label[remark]{hyp:anchor-closure}
The two complementary relations
\begin{equation}\label{eq:anchor-time}
 F\precT Z_2,\qquad Z_1\precT T
\end{equation}
do not follow from cyclic bitonicity.  For example, on the five-point fan
$F<Z_2<M<Z_1<T$, ranks $(1,0,2,3,4)$ have the required bitonic form and obey
the far relations, but fail the first relation in \eqref{eq:anchor-time}.
The present proof therefore uses the old 25-order ledger only when both
relations hold and certifies every failure by a delimiter ledger.
\end{remark}

\begin{lemma}[Inner-anchor sweep]\label[lemma]{lem:anchored-sweep}
If both relations in \eqref{eq:anchor-time} hold, the two anchors lie on the
central increasing phase of the clockwise fan sweep.  Every temporal disorder
then consists of one fan prefix moved before $F$ and one fan suffix moved after
$T$.
\end{lemma}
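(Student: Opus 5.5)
We work entirely with the temporal ranks on the fan sweep, using \Cref{prop:bitonic} after cutting at the gap $FT$. Orient the complementary hull boundary clockwise from $F$ to $T$, as in \Cref{lem:weak-fan-order}. By \Cref{prop:bitonic}, the rank sequence along this chain splits into at most three monotone phases. We orient it so that it reads decreasing, then increasing, then decreasing. Call the three phases the initial decreasing block $I$, the central increasing block $C$, and the terminal decreasing block $J$. Since the path has $F\precT T$ and the chain runs from $F$ to $T$, the vertex $F$ begins $I$ and $T$ ends $J$. Some of these phases may be empty or trivial.

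The first step is to locate the anchors. By \Cref{lem:weak-fan-order}, the fan order is $F\le_{\rm fan}Z_2\le_{\rm fan}M\le_{\rm fan}Z_1\le_{\rm fan}T$. By \Cref{prop:lambda-gap}, we have $F\precT M\precT T$, and \eqref{eq:anchor-time} gives $F\precT Z_2$ and $Z_1\precT T$.

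Every vertex of $I$ after $F$ has rank smaller than $F$, because $I$ is decreasing from $F$. So $F\precT Z_2$ forces $Z_2\notin I\setminus\{F\}$. Symmetrically, every vertex of $J$ before $T$ has rank larger than $T$, so $Z_1\precT T$ forces $Z_1\notin J\setminus\{T\}$.

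Monotonicity of the fan then does the rest. $Z_2$ lies fan-after $I$, and $Z_1$ lies fan-before $J$. Hence both anchors, and the vertex $M$ between them, lie in $C$ (weakly, allowing coincidence with a phase boundary). This is the first assertion.

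For the disorder description, we compare the temporal order with the fan order. The fan sweep $F,\dots,T$ is temporally increasing on $C$. The vertices of $I$ other than $F$ form a fan prefix with ranks below $F$, visited in reverse fan order. So they are exactly a fan prefix moved before $F$.

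To make this precise, we need $F$ to sit at the bottom of $C$'s range. Every vertex of $C$ comes temporally after $F$: the first vertex of $C$ is the minimum of the valley $I\cup C$, hence $\le F$ in rank. We have to rule out its being below $F$ unless it lies in $I$. We handle this by the deque structure from the proof of \Cref{prop:bitonic}: the unvisited set is always a cyclic interval, and $F$, $T$ are adjacent across the gap. So before $F$ is visited, the visited vertices form a cyclic interval not containing $F$. Such an interval is either a fan prefix (excluding $F$), a fan suffix, or it wraps across the gap, which is impossible because the gap edge $FT$ is not traversed.

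Similarly, the vertices of $J$ other than $T$ are a fan suffix visited after $T$ in reverse order. The remaining middle block $\{F\}\cup C\cup\{T\}$ is visited in increasing fan order. So the only temporal disorders are the displaced prefix and the displaced suffix.

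The main obstacle is this bookkeeping at the phase boundaries, where weak fan equalities and shared vertex normal cones can merge an anchor with $F$, $T$ or a turning vertex. We resolve these by choosing contacts as permitted in \Cref{lem:weak-fan-order}. We also invoke the deque-interval argument to show that the vertices displaced before $F$ cannot come from the suffix side, since that would require crossing the untraversed gap edge $FT$.
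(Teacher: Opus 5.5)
Your first step matches the paper: $F\precT Z_2$ keeps $Z_2$ out of the initial decreasing phase, and $Z_1\precT T$ keeps $Z_1$ out of the terminal one. The second half, however, contains a false claim. You assert three things:
\begin{enumerate}
\item the block moved before $F$ is exactly $I\setminus\{F\}$;
\item every vertex of $C$ comes temporally after $F$;
\item $\{F\}\cup C\cup\{T\}$ is visited in increasing fan order.
\end{enumerate}
None of these follows from the three-phase structure, and all three fail in general. The central phase begins at the turning vertex $L_0$, where the ranks bottom out below $t(F)$, so the first vertices of $C$ after $L_0$ may still precede $F$.

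For instance, take fan order $F=w_0,w_1,\dots,w_6=T$ and path order $w_1,w_2,F,w_3,w_4,w_5,T$. This is a legal deque order that never uses the link $FT$. The fan ranks are $3,1,2,4,5,6,7$. Taking $Z_2=w_3$, $M=w_4$, $Z_1=w_5$ satisfies every order hypothesis of the lemma. Yet $w_2\in C$ precedes $F$, and the block moved before $F$ is $\{w_1,w_2\}$: a fan prefix reaching into $C$, not $I\setminus\{F\}$.

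What is missing is the single-crossing step the paper uses.
\begin{itemize}
\item On the fan interval from $F$ to $Z_2$, the predicate $t(X)\le t(F)$ is true throughout the decreasing phase.
\item On the increasing stretch from $L_0$ to $Z_2$, it changes from true to false at most once, because $t(Z_2)>t(F)$.
\item Beyond $Z_2$ every rank exceeds $t(F)$: central ranks there are at least $t(Z_2)$, and terminal ranks are at least $t(T)$.
\end{itemize}
So the set preceding $F$ is a fan prefix, possibly extending into $C$, and symmetrically for $T$. Only the leftover vertices, which lie strictly inside $C$, are forced into increasing fan order.

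Your deque remark can also give this, but only if you argue two things. First, at the moment $F$ is visited it is an endpoint of the unvisited interval, so the visited interval is adjacent to $F$. Second, that interval cannot contain $T$, because $F\precT T$. Your case list (``prefix, suffix, or wraps across the gap'') misses this. A cyclic interval avoiding $F$ cannot wrap across the gap at all. What must actually be excluded is an interval lying in the interior of the fan, which adjacency rules out, and one containing $T$, which $F\precT T$ rules out rather than the gap being untraversed.

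Finally, the displaced block is a merge of a reverse-fan-ordered part from $I$ and a fan-ordered part from $C$. So you should claim only the set-level statement, which is all the lemma asserts.
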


\begin{proof}
Let $L_0,H_0$ be the phase-change contacts of the
decreasing--increasing--decreasing sweep from \Cref{prop:bitonic}.  If
$Z_2\le_{\rm fan}L_0$, monotonicity on the first phase gives
$t(Z_2)\le t(F)$, contradicting $F\precT Z_2$.  If
$H_0\le_{\rm fan}Z_1$, the final phase gives $t(T)\le t(Z_1)$,
contradicting $Z_1\precT T$.  Hence
\[
 L_0<_{\rm fan}Z_2\le_{\rm fan}M\le_{\rm fan}Z_1<_{\rm fan}H_0.
\]
Before $Z_2$, the condition $t(X)\le t(F)$ can change from true to false at
most once, so exactly one initial fan prefix may precede $F$.  The time-reversed
argument gives one terminal suffix after $T$.  The remaining contacts occur in
increasing fan order.
\end{proof}

\begin{lemma}[Right delimiter gap]\label[lemma]{lem:delimiter-gap}
Let $M=V_0,V_1,\ldots,V_n=Z_1$ be the consecutive hull vertices on the
clockwise boundary arc from an upward contact to a right-anchor contact.  If
\[
 M\precT T\precT Z_1,
\]
then some adjacent pair $U=V_{j-1},V=V_j$ satisfies
\[
 F\precT U\precT T\precT V.
\]
The edge $UV$ is a gap.  Its outward normal lies in the closed normal-fan
interval from $(0,1)$ to $Z_1$, so the acute angle of $UV$ with the selected
horizontal gap is at most
\[
 \delta=\pi-2\beta,
 \qquad \cos\delta=\kappa=\frac{547947}{625565}.
\]
\end{lemma}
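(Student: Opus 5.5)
The plan is a discrete intermediate-value argument along the vertex chain $V_0,\dots,V_n$, followed by two geometric observations about the edge it produces.

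\textbf{Step 1: locating the crossing.} Put $t_j:=t(V_j)$. Then $t_0=t(M)<t(T)$, because $M\precT T$. Also $t_n=t(Z_1)>t(T)$, because $T\precT Z_1$. Let $j$ be the least index with $T\precT V_j$; it exists since $j=n$ qualifies, and $j\ge1$ since $j=0$ does not. Set $U=V_{j-1}$ and $V=V_j$.

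\begin{itemize}
\item By minimality of $j$, $U\preceqT T$.
\item The hull vertices are distinct path vertices, each visited once (the path is standard), so $U\neq T$ gives $U\precT T$. I will check that $T$ does not lie on the chain $V_0,\dots,V_{j-1}$. By \Cref{lem:weak-fan-order}, the clockwise arc from $M$ to $Z_1$ precedes $T$ in fan order. If $T$ coincided with some $V_i$, fan monotonicity would force $T=Z_1$, contradicting $T\precT Z_1$. The degenerate weak-fan equalities are handled the same way.
\end{itemize}

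\textbf{Step 2: the lower bound $F\precT U$.} I plan to use the three-phase structure of \Cref{prop:bitonic}, cut at the gap $FT$. Clockwise from $F$, the ranks decrease, then increase, then decrease. The arc from $M$ to $Z_1$ lies strictly between $F$ and $T$ in fan order. Every vertex on it has some rank, and the vertex $V_j$ has rank exceeding $t(T)$ while lying before $T$ in the fan, so it sits in the final decreasing phase or at its peak. My approach is to argue that the chain $V_0,\dots,V_{j-1}$ lies in the increasing phase. Otherwise a vertex ranked below $F$ would appear after $M$, while $M$ itself has rank above $F$ (since $F\precT M$). That would produce a decrease followed by an increase after the increase from $F$ up to $M$, giving too many phases. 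Concretely, the sequence
\[
F,\ \ldots,\ M,\ \ldots,\ U,\ V,\ \ldots,\ T
\]
contains the pattern up (at $M$), down (below $F$ at $U$), up (above $T$ at $V$), down (to $T$). This is four monotone runs, which the three-phase form forbids. Hence $F\precT U$.

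I expect this to be the main obstacle. The orientation convention in \Cref{prop:bitonic} has to be applied carefully with respect to which vertex, $F$ or $T$, is temporally earlier. I would first try the fallback of the deque description: in that order the unvisited set is always a cyclic interval. Since $U$ is visited before $T$, if $U$ were visited before $F$ then the vertices $F$ and $T$ would both remain unvisited at time $t(U)$ on opposite sides of $U$ (with $M$ and $V$ to be visited later), which the interval structure forbids.

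\textbf{Step 3: $UV$ is a gap.} $U$ and $V$ are adjacent hull vertices, and the path has no other vertices. If $UV$ were a path link, their ranks would be consecutive. But $T$ has rank strictly between $t(U)$ and $t(V)$, so $UV$ is not traversed and is therefore a gap.

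\textbf{Step 4: the normal and the angle bound.} The edge $UV$ lies on the clockwise arc from the $(0,1)$-contact $M$ to the $Z_1$-contact. By monotonicity of exposed faces in the outer normal (as in \Cref{lem:weak-fan-order}), its outward normal lies in the closed clockwise fan interval between $(0,1)$ and $Z_1=(\rho,\kappa)$. The angle between $(0,1)$ and $Z_1$ is $\arccos\kappa=\delta$. The acute angle between $UV$ and the horizontal gap line equals the angle between its normal and $(0,1)$, so it is at most $\delta$. This matches the value $\cos\delta=\kappa=547947/625565$ obtained from $\kappa=s^2-c^2=(p^2-q^2)/S$.
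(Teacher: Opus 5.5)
Your Steps 1, 3 and 4 follow the paper's argument: the first red index on the chain, $UV$ omitted because $T$ is visited strictly between $U$ and $V$, and monotone outer normals. Your check that $T$ cannot lie on $V_0,\dots,V_n$ makes explicit a point that the paper's blue/red coloring takes for granted.

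The real difference is Step 2. The paper applies the deque property directly at the moment $F$ is visited. If $U\precT F$, then at that moment $F,U$ are visited while $M,V$ are not, and in the cyclic order $F,\dots,M,\dots,U,V,\dots$ the visited pair separates the unvisited pair.

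You instead use the derived decreasing--increasing--decreasing form of \Cref{prop:bitonic}, and your main argument is correct. It is cleanest as follows:
\begin{enumerate}
\item Ranks on the first phase are at most $t(F)$, and ranks on the last phase are at least $t(T)$.
\item Hence $t(F)<t(M)<t(T)$ puts $M$ strictly inside the increasing phase.
\item $U$ lies clockwise at or after $M$ with $t(U)<t(T)$, so it is still in that phase, whence $t(U)\ge t(M)>t(F)$.
\end{enumerate}
This is equivalent to your observation that $F,M,U,V,T$ would otherwise carry a forbidden up--down--up--down pattern. The paper's route is more primitive and needs no orientation bookkeeping; yours reuses the phase picture of \Cref{lem:anchored-sweep}.

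Two side remarks in your Step 2 should be dropped.
\begin{itemize}
\item The claim that $V$ must sit in the final decreasing phase or at its peak does not follow. On the hull $F,M,V,Z_1,T$ with visiting order $F,M,T,V,Z_1$, the clockwise ranks are $0,1,3,4,2$, and $V$ lies inside the increasing phase. Nothing in your proof uses this claim.
\item More importantly, the deque ``fallback'' fails as written. At time $t(U)$ the visited set can be a short arc around $U$ strictly inside the arc from $M$ to $V$. Since $F,T$ are adjacent on the hull, the unvisited set is then still one cyclic interval, so no contradiction arises yet. The separation only appears once $F$ has been visited, which is precisely the paper's argument.
\end{itemize}
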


\begin{proof}
Color $V_i$ blue if $V_i\precT T$ and red if $T\precT V_i$.  The first vertex
is blue and the last red.  Let $j$ be the first red index.  Then
$U\precT T\precT V$.  Suppose $U\precT F$.  Immediately after $F$ is visited,
both $F,U$ have been visited, while $M,V$ remain unvisited because
$F\precT M\precT T\precT V$.  In cyclic hull order the four occur as
\[
 F,\ldots,M,\ldots,U,V,\ldots,F.
\]
Thus the visited vertices $F,U$ separate the unvisited vertices $M,V$ into two
components, contradicting the deque property proved in
\Cref{prop:bitonic}.  Hence $F\precT U$.

The vertices $U,V$ are adjacent on the hull.  They cannot be consecutive path
vertices because $T$ lies strictly between them in time, so $UV$ is omitted.
Outer normals rotate monotonically along the convex boundary; therefore the
normal of $UV$ lies between those of the endpoint support faces, namely
$(0,1)$ and $Z_1$.  Rotating normals by $\pi/2$ gives the asserted tangent-angle
bound, and $\cos(\pi-2\beta)=s^2-c^2=\kappa$.
\end{proof}

\begin{corollary}[Left delimiter gap]\label[corollary]{cor:left-delimiter}
If $Z_2\precT F\precT M$, then there is a left delimiter gap $WX$ whose
normal lies between $Z_2$ and $(0,1)$ and whose endpoints satisfy
\[
 W\precT F\precT X\precT T.
\]
\end{corollary}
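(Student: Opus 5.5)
The plan is to deduce the statement from \Cref{lem:delimiter-gap} by the reflection and time-reversal symmetry already used in \Cref{cor:reflected-far}, rather than repeating the coloring argument. Let $\mathcal R(x,y)=(-x,y)$, apply it to the normalized hull, and reverse the temporal parameter. As in the proof of \Cref{cor:reflected-far}, the transformed path is a standard escape minimizer of the same length, with selected gap $F'=\mathcal R(T)$, $T'=\mathcal R(F)$ and $M'=\mathcal R(M)$. In the reversed order $\prec'$ we have $F'\prec' M'\prec' T'$ and $x_{F'}\le x_{T'}$. Reflection sends the direction $Z_2=(-\rho,\kappa)$ to $Z_1=(\rho,\kappa)$ and fixes $(0,1)$. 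So a $Z_2$-contact of the original hull becomes a $Z_1$-contact $Z_1'$ of the transformed hull.

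Next, translate the hypothesis. Reversal turns $Z_2\prec F\prec M$ into $M'\prec' T'\prec' Z_1'$, which is exactly the hypothesis of \Cref{lem:delimiter-gap} for the transformed configuration. The vertex chain $M'=V_0,\dots,V_n=Z_1'$ is the image of the original counterclockwise arc from $M$ to $Z_2$. Because reflection reverses orientation, this image is the clockwise arc demanded by the lemma. The lemma then produces an adjacent pair $U',V'$ with $F'\prec' U'\prec' T'\prec' V'$, such that $U'V'$ is a gap whose outward normal lies between $(0,1)$ and $Z_1$.

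Finally, pull back. Set $X=\mathcal R^{-1}(U')$ and $W=\mathcal R^{-1}(V')$. Undoing the time reversal inverts every relation, so the chain above becomes $W\prec F\prec X\prec T$. Adjacency on the hull and the property of being an omitted edge are both invariant under a rigid motion and under reversal. The normal of $WX$ is the reflection of a direction between $(0,1)$ and $Z_1$, so it lies between $(0,1)$ and $Z_2$.

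The main obstacle is bookkeeping rather than estimation. One must check that the fan orientation in \Cref{lem:weak-fan-order} and \Cref{lem:delimiter-gap} is transported correctly, so that the arc from $M$ to $Z_2$ really maps to the clockwise arc from $M'$ to $Z_1'$ with the upward contact playing the same role. One must also check that the endpoint names $W$ and $X$ come out in the stated temporal roles. If the symmetry bookkeeping proves delicate, the fallback is to repeat the coloring proof directly. Color the vertices on the arc from $Z_2$ to $M$ according to whether they come before or after $F$, and take the last vertex before $F$ and its neighbour. The deque property from \Cref{prop:bitonic}, applied at the moment $T$ is visited, rules out $T\prec X$: otherwise the visited pair $W,T$ would separate the unvisited vertices $X$ and $M$.
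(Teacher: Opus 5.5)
Your reflection-plus-time-reversal reduction is correct and is exactly the paper's proof. Your bookkeeping also checks out: the hypothesis becomes $M'\prec' T'\prec' Z_1'$, the counterclockwise arc from $M$ to $Z_2$ becomes the clockwise arc required by \Cref{lem:delimiter-gap}, and pulling back $U'=\mathcal R X$, $V'=\mathcal R W$ gives $W\prec F\prec X\prec T$.

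Your fallback sentence is wrong as written, although the main proof does not need it. When $T$ is visited, $M$ is already visited because $M\prec T$. Also, in the cyclic order $F,W,X,M,T$ the vertices $X$ and $M$ lie on the same side of $WT$, so $W,T$ do not separate them. The correct contradiction to $T\prec X$ comes at the moment $M$ is visited: then the visited $W,M$ interleave with the unvisited $X,T$.
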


\begin{proof}
Apply \Cref{lem:delimiter-gap} after reflection in $x=0$ and temporal reversal.
Under this transformation the right delimiter endpoints return in reverse
order, giving precisely $W\precT F\precT X\precT T$.
\end{proof}

\section{Exact four-source finite certificate}\label{sec:finite}

Put
\[
 \eta=\frac\pi2-\beta,\qquad \mu=\frac{10973}{125000},\qquad
 \nu=\delta_\eta+\delta_{\pi-\eta}+\mu\delta_{2\eta}+\mu\delta_{\pi-2\eta}.
\]
Then $\nu(\SSS)=2+2\mu$, and \eqref{eq:integrated-support} gives the exact mass
\begin{equation}\label{eq:Bmass}
 B_*:=2sc\,\nu(\SSS)=4sc(1+\mu)
 =\frac{10259298823}{9774453125}.
\end{equation}

Cut the folded fan at $d=(0,-1)$ and define
\begin{align*}
 P_1&=\mu(c,-s),&
 P_2&=2c(s,c),\\
 P_3&=2c\mu(s^2-c^2,2sc),&
 P_4&=\mu(3c-4c^3,4s^3-3s),\\
 Z_1&=(2sc,s^2-c^2)=\zeta_R,&
 Z_2&=(-2sc,s^2-c^2)=\zeta_L,\\
 Q_1&=\mu(-(3c-4c^3),4s^3-3s),&
 Q_2&=2c\mu(-(s^2-c^2),2sc),\\
 Q_3&=2c(-s,c),&
 Q_4&=\mu(-c,-s).
\end{align*}
Let $d_1=d_2=d$.

The fold may be audited source by source:
\begin{center}
\begin{tabular}{ccl}
\toprule
source & weight & three folded vectors\\
\midrule
$\eta$ & $1$ & $P_2,\ Z_2,\ d_1$\\
$\pi-\eta$ & $1$ & $Q_3,\ d_2,\ Z_1$\\
$2\eta$ & $\mu$ & $P_3,\ Q_1,\ P_1$\\
$\pi-2\eta$ & $\mu$ & $Q_2,\ Q_4,\ P_4$\\
\bottomrule
\end{tabular}
\end{center}
Each row sums to zero by the three-normal identity, so
\begin{equation}\label{eq:balance-family}
 P_1+P_2+P_3+P_4+Z_1+Z_2+Q_1+Q_2+Q_3+Q_4+d_1+d_2=0.
\end{equation}
Exact cross products are positive in the cyclic vector order
\begin{equation}\label{eq:cyclic-vectors}
 d,P_1,P_2,P_3,P_4,Z_1,Z_2,Q_1,Q_2,Q_3,Q_4,d.
\end{equation}
For auditability, no numerical angle sorting is required.  The consecutive
cross products in \eqref{eq:cyclic-vectors} are, in order,
\[
\begin{gathered}
 \frac{2161681\sqrt{625565}}{78195625000},\quad
 \frac{2161681\sqrt{625565}}{39097812500},\quad
 \frac{83892677929\sqrt{625565}}{12229111538281250},\\
 \frac{1127417054285365682771879}{956263019930613769531250000},\quad
 \frac{2161681\sqrt{625565}}{78195625000},\quad
 \frac{330745192776}{391331569225},\\
 \frac{2161681\sqrt{625565}}{78195625000},\quad
 \frac{1127417054285365682771879}{956263019930613769531250000},\quad
 \frac{83892677929\sqrt{625565}}{12229111538281250},\\
 \frac{2161681\sqrt{625565}}{39097812500},\quad
 \frac{2161681\sqrt{625565}}{78195625000}.
\end{gathered}
\]
Every quantity is strictly positive; hence the displayed order is exact.

\subsection{The 25 certified orders}

The vector names in \eqref{eq:cyclic-vectors} use the counterclockwise normal
fan: $Z_1=\zeta_R$ and $Z_2=\zeta_L$.  For
$k,\ell\in\{0,1,2,3,4\}$ define
\begin{equation}\label{eq:orders}
 O_{k,\ell}:=(P_k,\dots,P_1),d_1,(P_{k+1},\dots,P_4),Z_1,Z_2,
 (Q_1,\dots,Q_{4-\ell}),d_2,(Q_4,\dots,Q_{5-\ell}),
\end{equation}
with empty ranges omitted.

\begin{lemma}[Inner-branch fan-to-ledger interface]\label[lemma]{lem:fan-ledger-interface}
If a nonconvex standard minimizer of length $<D$ satisfies both inner-anchor
relations \eqref{eq:anchor-time}, then, after possibly reversing the entire
temporal parametrization, the folded support vectors occur in one of the 25
orders \eqref{eq:orders}.
\end{lemma}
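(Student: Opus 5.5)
We would prove the statement by combining the exposed-face monotonicity of \Cref{lem:weak-fan-order} with the inner-anchor sweep \Cref{lem:anchored-sweep}, and then reading off the ledger order from the temporal order of the selected contacts.

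\textbf{Step 1: assign a contact to every folded vector.} Each folded vector in \eqref{eq:cyclic-vectors} is a positive multiple of an outward normal. Assign to it a path vertex in its exposed face, as \Cref{lem:ledger} permits. For $d_1,d_2$ we choose $F$ and $T$, which is allowed because both lie on the gap face with normal $d$. For $Z_2,Z_1$ we take the contacts of \Cref{cor:reflected-far} and \Cref{lem:weak-fan-order}.

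Because exposed faces move monotonically with normal angle, the contacts for
\[
 P_1,\dots,P_4,\,Z_1,\,Z_2,\,Q_1,\dots,Q_4
\]
then occur weakly monotonically along the complementary boundary chain from $F$ to $T$. In the counterclockwise naming they run from $T$ back to $F$; this is the source of the possible global reversal. Where contacts coincide, or an edge is exposed, the endpoint/splitting remark after \Cref{lem:ledger} lets us pick representatives. The resulting ties can be broken consistently with the fan order, so weak coincidences cause no new orders.

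\textbf{Step 2: apply the sweep structure.} By \Cref{lem:anchored-sweep}, the two anchors lie on the central increasing phase. The temporal order of all contacts is therefore obtained from the fan order in three pieces:
\begin{itemize}[leftmargin=*]
 \item a fan prefix (contacts before $Z_2$ in the clockwise sweep) is moved before $F$, in reversed order since that phase is decreasing;
 \item the central block, which contains both anchors, keeps its fan order;
 \item a fan suffix after $Z_1$ is moved after $T$, again reversed.
\end{itemize}

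\textbf{Step 3: translate into the vector indexing.} The vectors strictly between $d$ and the anchor pair on one side are $P_1,\dots,P_4$, and on the other side $Q_1,\dots,Q_4$. A reversed prefix of length $k$ preceding $d_1$ (at $F$) gives $(P_k,\dots,P_1),d_1,(P_{k+1},\dots,P_4)$. A suffix of length $\ell$ moved after $d_2$ gives $(Q_1,\dots,Q_{4-\ell}),d_2,(Q_4,\dots,Q_{5-\ell})$. Here $k,\ell\in\{0,\dots,4\}$, and the anchors $Z_1,Z_2$ stay adjacent in the middle, which yields \eqref{eq:orders}.

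Whether the $P$-side corresponds to $F$ or to $T$ depends on the clockwise-versus-counterclockwise convention. If it is the opposite one, we reverse the whole temporal parametrization. This is harmless: it preserves length, escape and standardness, it swaps the roles of $F$ and $T$, and the ledger bound is reversal-invariant by \Cref{lem:reversal}.

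\textbf{Main obstacle: bookkeeping.} The chief difficulty is checking that the counterclockwise naming of \eqref{eq:cyclic-vectors}, with $Z_1=\zeta_R$ listed before $Z_2=\zeta_L$, matches the clockwise sweep $F\le Z_2\le M\le Z_1\le T$ after the reversal. We also need the mutual order of $Z_1,Z_2$ in \eqref{eq:orders} to agree with the time order the central phase forces; we would verify this by tracking one explicit example. A secondary point is that the prefix/suffix moved past $F$ or $T$ cannot contain an anchor. This is exactly what the relations \eqref{eq:anchor-time} guarantee, through the phase-change argument already given in \Cref{lem:anchored-sweep}.
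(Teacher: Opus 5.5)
\textbf{Gap: the internal order of the migrated blocks.} Your route is the same as the paper's. You list the folded vectors along the clockwise complementary boundary, use \Cref{lem:anchored-sweep} to move one block before $F$ and one after $T$, reverse time, and invoke \Cref{lem:reversal}. The bookkeeping you left open resolves as follows. Clockwise from $F$ the fan reads $d,Q_4,Q_3,Q_2,Q_1,Z_2,Z_1,P_4,P_3,P_2,P_1,d$. So it is always a $Q$-block that precedes $F$ and a $P$-block that follows $T$. The reversal is therefore always performed, and afterwards $d_1$ sits at $T$ and $d_2$ at $F$, with $Z_1,Z_2$ in the order of \eqref{eq:orders}.

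The step that fails is ``moved before $F$, in reversed order since that phase is decreasing.'' \Cref{lem:anchored-sweep} only shows that the \emph{set} of contacts preceding $F$ is a fan prefix. Nothing keeps that prefix inside the first, decreasing phase. The phase-change contact $L_0$ carries the smallest rank. Contacts just clockwise of $L_0$ lie on the increasing phase, yet they can still precede $F$.

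For example, put all contacts at distinct vertices and take the deque order
$Q_3,Q_2,Q_4,F,Q_1,Z_2,M,Z_1,P_4,\dots,P_1,T$.
\begin{itemize}
\item Every visited set stays a cyclic interval.
\item The ranks read clockwise from $F$ are $4,3,1,2,5,6,\dots,13$, which is decreasing then increasing.
\item $F\precT Z_2$ and $Z_1\precT T$ both hold, as do the far relations.
\item After reversal the sequence ends $d_2,Q_4,Q_2,Q_3$. The only candidate, $O_{0,3}$, ends $d_2,Q_4,Q_3,Q_2$.
\end{itemize}
The paper's proof passes over the same point when it asserts that the reversed sequence is ``exactly $O_{k,\ell}$.'' So neither argument establishes the lemma as literally stated.

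\textbf{Repair.} What is used downstream is only the radius bound, and that survives. Such an order differs from some $O_{k,\ell}$ only by a permutation inside the two migrated blocks. Its extra suffix states are therefore sums of subsets of $\{Q_1,\dots,Q_4\}$, or negatives of sums of subsets of $\{P_1,\dots,P_4\}$. Since $|P_1|=|P_4|=\mu$, $|P_2|=2c$, $|P_3|=2c\mu$, and likewise for the $Q_i$, these states have norm at most $2c+2\mu+2c\mu<0.72<\Rhat$. You should either add this observation or state the lemma ``up to permutations within the migrated blocks.''
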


\begin{proof}
Along the clockwise complementary boundary from $F$ to $T$, the reverse of
\eqref{eq:cyclic-vectors} is
\[
 d,\ Q_4,Q_3,Q_2,Q_1,\ Z_2,Z_1,\ P_4,P_3,P_2,P_1,\ d.
\]
By \Cref{lem:anchored-sweep}, an initial block among
$Q_4,Q_3,Q_2,Q_1$ may migrate before $F$, and a terminal block among
$P_4,P_3,P_2,P_1$ may migrate after $T$; the remaining blocks occur in the
clockwise fan order between $F$ and $T$.  If a calibration direction exposes
an edge, assign its mass to an endpoint consistent with that weak fan order;
any split assignment is controlled by the convexity observation following
\Cref{lem:ledger}.

Reverse the entire temporal order.  If the terminal $P$ block has size $k$
and the initial $Q$ block has size $\ell$, the reversed sequence is exactly
$O_{k,\ell}$: $d_1$ is the right gap endpoint $T$ and $d_2$ the left endpoint
$F$ in that reversed convention.  A global reversal is harmless for the
radius estimate by \Cref{lem:reversal}.
\end{proof}

\begin{lemma}[Exact ledger certificate]\label[lemma]{lem:exact-ledger}
Every nonzero suffix state $R$ of every order $O_{k,\ell}$ satisfies
\[
 \|R\|<\Rhat,\qquad \Rhat:=\frac{27131}{25000}=1.08524.
\]
\end{lemma}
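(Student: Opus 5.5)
The claim is a finite exact computation, and I would organize it so that every comparison reduces to rational arithmetic plus one rational enclosure of $\sqrt{625565}$.

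\textbf{Step 1: coordinates in $\mathbb Q(\sqrt S)$.} Every vector in \eqref{eq:cyclic-vectors} has coordinates in $\mathbb Q(\sqrt{S})$ with $S=625565$. Since $s=p/\sqrt S$ and $c=q/\sqrt S$, each coordinate is a polynomial in $s,c$ with rational coefficients ($\mu$ rational). Monomials of even total degree in $(s,c)$ are rational. Monomials of odd degree are rational multiples of $1/\sqrt S$.

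\textbf{Step 2: the suffix states.} Each order $O_{k,\ell}$ in \eqref{eq:orders} lists the twelve vectors, so it has at most eleven nontrivial suffix sums; the full sum is $0$ by \eqref{eq:balance-family}. I would enumerate all $25\times 11=275$ suffix states symbolically. By construction the orders share long common blocks, namely $Z_1,Z_2$ and the $Q$-blocks. Many suffixes therefore coincide across different $(k,\ell)$, and the $P\leftrightarrow Q$ reflection $(x,y)\mapsto(-x,y)$ preserves norms. So the distinct squared norms collapse to a small list; the abstract claims 13. I would generate this list by grouping each suffix according to which $P_i$ and $Q_j$ it contains.

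\textbf{Step 3: form of each squared norm.} For each representative $R=(R_x,R_y)$, compute $\|R\|^2=R_x^2+R_y^2$ exactly. Cross terms between an odd-type and an even-type coordinate give terms in $\sqrt S$. Hence $\|R\|^2=\alpha+\beta\sqrt S$ with $\alpha,\beta\in\mathbb Q$. Sometimes $\beta=0$; for example $Z_1+Z_2=(0,2\kappa)$ is purely rational.

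\textbf{Step 4: the comparison with $\Rhat$.} The claim $\|R\|<\Rhat$ becomes $\alpha+\beta\sqrt S<\Rhat^2=(27131/25000)^2$. I would fix once a rational enclosure $r_-<\sqrt{S}<r_+$, checked by squaring (e.g. $r_\pm$ near $790.92$). Then:
\begin{itemize}
\item if $\beta\ge0$, check $\alpha+\beta r_+<\Rhat^2$;
\item if $\beta<0$, check $\alpha+\beta r_-<\Rhat^2$.
\end{itemize}
Where the margin is too thin for this, I would instead isolate the radical and square both sides, keeping track of signs, so the test is purely rational.

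\textbf{Step 5: split masses.} The remark after \Cref{lem:ledger} says split assignments produce convex combinations of endpoint states. So the vertex orders $O_{k,\ell}$ suffice, and no further states need checking.

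\textbf{Main obstacle.} The hard part is Step 2: proving that the reduction of 275 states to 13 distinct norms is correct and exhaustive. Step 4 is also delicate for the largest states, such as suffixes containing $Z_1$ together with the $P$-block, whose norm may sit very close to $1.08524$. For the bookkeeping I would rely on an independent exact replay, as in \Cref{app:verifier}. I would then record the 13 pairs $(\alpha,\beta)$ explicitly, so a reader can audit each inequality by hand.
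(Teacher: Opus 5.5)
Your proposal is correct and is essentially the paper's proof. You enumerate the $275$ nonzero suffix states exactly, collapse them to the $13$ distinct squared norms $a\sqrt{625565}+b$ of \Cref{tab:norms}, and compare each with $\Rhat^2$ using a single rational enclosure of $\sqrt{625565}$, leaving exhaustiveness to the exact replay. In the paper every radical coefficient satisfies $a\ge0$, so only the upper bound $\sqrt{625565}<790927/1000$ is needed and your $\beta<0$ branch and squaring fallback never arise; the minimal margin $2404020037/488722656250000$ comes from the two-vector state $d+Q_4$ (equivalently $d+P_1$), with $\|R\|^2=1+\mu^2+2\mu s$, and the $Z_1$-with-$P$-block state you flagged, $\|d+P_1+\cdots+P_4+Z_1\|=2\rho+2\mu(3c-4c^3)$, is a close second.
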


\begin{proof}
There are 11 nonzero suffixes per order, hence 275 raw states.  Exact symbolic
reconstruction reduces them to the 13 squared norms in \Cref{tab:norms}.
Every radical expression has the form $a\sqrt{625565}+b$ with $a\ge0$ rational.
Use the single exact enclosure
\[
 \sqrt{625565}<\frac{790927}{1000},\qquad
 \left(\frac{790927}{1000}\right)^2-625565
 =\frac{519329}{10^6}>0.
\]
Substitution into all 13 expressions gives a rational value strictly below
$\Rhat^2$.  The smallest squared-radius margin is
\[
 \frac{2404020037}{488722656250000}>0.
\]
The accompanying symbolic verifier reconstructs the vector family, the 25
orders, all 275 suffix states, their 13 distinct values, and these comparisons
from the definitions rather than loading a precomputed table.
\end{proof}

\section{Exact signed delimiter certificates}\label{sec:delimiter}

The preceding delimiter lemma converts every genuine near-anchor failure into
an exposed gap with a controlled normal.  This section gives a finite exact
ledger over the entire normal interval.  Put
\[
 \chi:=2c^2=\frac{77618}{625565}
\]
and define the four auxiliary fixed atoms
\[
 E_1=(\rho,-\chi),\quad E_2=(-\rho,-\kappa),\quad
 E_3=(-\rho,-\chi),\quad E_4=(\rho,-\kappa).
\]
Besides the two original rows containing $P_2,Z_2,Q_3,Z_1$, apply the exact
escape criterion at $-\eta$ and $\pi+\eta$.  The four fixed balanced rows are
\begin{equation}\label{eq:fixed-delimiter-rows}
\begin{array}{c|ccc}
\text{source}&\multicolumn{3}{c}{\text{balanced atoms}}\\ \hline
\eta&P_2&Z_2&d\\
\pi-\eta&Q_3&d&Z_1\\
-\eta&E_1&m&E_2\\
\pi+\eta&E_3&E_4&m
\end{array}
\qquad m=(0,1),\quad d=(0,-1).
\end{equation}
Every row sums to zero and its support sum is at least $\rho$.

\subsection{The complete signed normal parameter}

Let $n_R$ be the outward normal of a right delimiter.  By
\Cref{lem:delimiter-gap}, its polar angle belongs to
\[
 [\,2\beta-\tfrac\pi2,\tfrac\pi2\,]
 =[\,\beta-\eta,\beta+\eta\,].
\]
Thus write $n_R=u_{\beta+\theta_R}$ with
$-\eta\le\theta_R\le\eta$ and put
\[
 t_R=\tan\frac{\theta_R}{2},\qquad
 C_R=\frac{1-t_R^2}{1+t_R^2},\qquad
 S_R=\frac{2t_R}{1+t_R^2}.
\]
The source $\phi_R=\pi+\theta_R$ has the three balanced atoms
\begin{align}
 A_R&=\frac{(-2qC_R,-2qS_R)}{\sqrt S},\nonumber\\
 B_R&=\frac{(qC_R+pS_R,qS_R-pC_R)}{\sqrt S},\label{eq:right-moving-row}\\
 N_R&=\frac{(qC_R-pS_R,pC_R+qS_R)}{\sqrt S}=n_R.\nonumber
\end{align}
For a left delimiter write its normal as
$N_L=u_{\pi-\beta-\theta_L}$ with the same signed range and use the balanced
row
\begin{align}
 A_L&=\frac{(2qC_L,-2qS_L)}{\sqrt S},\nonumber\\
 N_L&=\frac{(-qC_L+pS_L,pC_L+qS_L)}{\sqrt S},\label{eq:left-moving-row}\\
 B_L&=\frac{(-qC_L-pS_L,-pC_L+qS_L)}{\sqrt S}.\nonumber
\end{align}
The signs are essential.  The earlier quarter-square calculation
$t_R,t_L\ge0$ covers only half of each permitted fan arc; the certificate
below uses the full signed range.

The exact radical enclosure
\begin{equation}\label{eq:two-sided-root}
 \frac{790926}{1000}<\sqrt{625565}<\frac{790927}{1000}
\end{equation}
follows by squaring, since the two positive margins are respectively
$265631/250000$ and $519329/10^6$.  Moreover
\[
 |t_R|,|t_L|\le t_*:=\tan\frac\eta2
 =\frac{197}{\sqrt{625565}+766}
 <\widehat t:=\frac{197000}{1556926}.
\]

\subsection{Exact interval-ledger principle}

For a rational interval $I=[a,b]\subset[-\widehat t,\widehat t]$, the function
$S(t)=2t/(1+t^2)$ is increasing.  The function
$C(t)=(1-t^2)/(1+t^2)$ increases on the negative half and decreases on the
positive half; if $0\in I$, its exact upper endpoint is $1$.  These facts give
rational coordinate intervals for \eqref{eq:right-moving-row} and
\eqref{eq:left-moving-row}.  Multiplication by the outward interval
\[
 \frac{1000}{790927}<\frac1{\sqrt S}<\frac{1000}{790926}
\]
then gives rational boxes containing every moving atom.

Each stored calibration consists of nonnegative rational source weights whose
sum is exactly
\[
 \frac{R}{\rho},\qquad R:=\frac{96717}{100000},
\]
together with rational divisions of the aggregate $d$-mass between $F,T$ and
of each delimiter-normal mass between the endpoints of its exposed edge.
Because both endpoints have the same support value, these divisions preserve
the support sum and exact balance.  The right-hand sides of the weighted
source inequalities sum to $R$.

For every temporal order, form every nonzero suffix of the assigned vector
masses.  Outward interval addition and squaring give a rational upper bound on
its squared norm.  If all these upper bounds are below one, the directed ledger
with radius one yields
\[
 R\le\sum_jh_H(v_j)\le\len(\gamma).
\]
This argument is uniform over the whole parameter cell and contains no
floating-point inference.

\begin{theorem}[Exact one-delimiter ledger]\label[theorem]{thm:one-delimiter}
Assume a genuine right failure and no genuine left failure, so after resolving
coincident labels at one temporal vertex,
\[
 F\precT M\precT T\precT Z_1,\qquad
 F\precT Z_2\precT T,
\]
and let the delimiter satisfy $F\precT U\precT T\precT V$.  For every
$t_R\in[-t_*,t_*]$ and every compatible weak deque order of the marked
contacts,
\[
 \len(\gamma)\ge R>D.
\]
\end{theorem}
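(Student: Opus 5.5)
The proof will be an instance of the exact interval-ledger principle of the preceding subsection. It reduces the statement to a finite, exhaustive rational replay. First, fix the calibration family for this branch. Use the four fixed balanced rows \eqref{eq:fixed-delimiter-rows} at sources $\eta,\pi-\eta,-\eta,\pi+\eta$, together with the moving row \eqref{eq:right-moving-row} at source $\phi_R=\pi+\theta_R$. The moving row contributes the delimiter normal $N_R=n_R$, whose support value is attained on both $U$ and $V$.

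Each row sums to zero and, by \Cref{cor:iso-support}, has support sum at least $\rho$, the right-hand side normalized by $\nu$-mass. Hence nonnegative weights summing to $R/\rho$ give $\sum_j h_H(v_j)\ge R$. \Cref{lem:ledger} with radius one gives the matching upper bound $\sum_j h_H(v_j)\le\len(\gamma)$, provided every nonzero suffix state has norm at most $1$. Since $R=96717/100000>D$ by a one-line rational comparison, the claim reduces to the suffix bound.

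Second, enumerate the temporal orders. The marked contacts are $F,T,M,Z_1,Z_2,U,V$, together with the contacts of the auxiliary atoms $E_1,\dots,E_4$, $P_2$, $Q_3$ and $A_R,B_R$. For each of these I would locate its face in the clockwise fan from $F$ to $T$, as in \Cref{lem:weak-fan-order}. Then I would list every weak deque order compatible with \Cref{prop:bitonic}. Each such order must also respect the branch constraints $F\precT M\precT T\precT Z_1$, $F\precT Z_2\precT T$ and $F\precT U\precT T\precT V$. Coincidences inside one vertex normal cone are handled by the splitting remark after \Cref{lem:ledger}.

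The deque property and the three-phase form make this list finite. I expect it to be exactly the 512 one-delimiter orders. For each order, the masses of $d$ and of $n_R$ are divided between the endpoints $F,T$ and $U,V$ according to the stored rational divisions. This division leaves the support sum unchanged because both endpoints of each pair share the same support value.

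Third, make the bound uniform in $t_R$. Cover $[-t_*,t_*]\subset[-\widehat t,\widehat t]$ by the finitely many rational cells (the 1540 signed half-angle intervals). On each cell, use the monotonicity of $S$ and the piecewise monotonicity of $C$, together with the two-sided enclosure \eqref{eq:two-sided-root}, to obtain rational boxes containing $A_R,B_R,N_R$. The fixed atoms get rational boxes from the same enclosure. Outward interval addition then bounds every suffix box, and the squared norm of each is bounded above by a rational number that must be checked to lie below $1$.

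The main obstacle is the exhaustiveness of the order enumeration, rather than the arithmetic. One must show that every weak deque order of the marked contacts, including those with label coincidences and with $t_R$ of either sign, appears in the replay list. The right place for the moving atoms $A_R,B_R$ in the fan depends on the sign of $\theta_R$. I would handle this by splitting the cells at $t=0$ and generating the orders per cell from the fan positions, treating each coincidence as a limit of adjacent strict orders via the convex-combination remark. The final step is then the verifier's exact check that all suffix bounds are below one on every (cell, order) pair.
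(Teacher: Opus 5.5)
Your plan matches the paper's proof: the four fixed rows plus the moving row with weights summing to $R/\rho$ and the directed ledger at radius one, the 512 deque orders, the 1540 signed cells with rational boxes for $A_R,B_R,N_R$, and finally $R>D$. One correction: the fan slots of $A_R$ and $B_R$ do not depend on the sign of $\theta_R$, because their polar angles $\pi+\theta_R$ and $\theta_R-\beta$ stay between those of $E_3,Q_3$ and of $E_4,T$ for all $|\theta_R|\le\eta$, so the single fan $F,E_2,E_3,A_R,Q_3,Z_2,M,U,V,Z_1,P_2,E_1,E_4,B_R,T$ and one list of 512 orders serve every cell; also, temporal ties are covered not by the splitting remark but because the suffix states of a weak order form a subset of those of any strict refinement.
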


\begin{proof}
In clockwise fan order the mass-bearing labels are
\[
 F,E_2,E_3,A_R,Q_3,Z_2,M,U,V,Z_1,P_2,E_1,E_4,B_R,T.
\]
The deque rule and the displayed temporal incidences generate exactly 512
strict marked orders.  A weak coincidence is covered by any strict refinement
because all labels at that temporal vertex have the same spatial contact.

The rational certificate partitions $[-\widehat t,\widehat t]$ into 1540
closed intervals with disjoint interiors: seven on the negative half and 1533
on the positive half.  On each interval, for each of the 512 orders, at least
one stored calibration has every squared suffix norm below one.  The global
exact upper bound $W_1$ satisfies
\[
 W_1<\frac{99999999}{100000000}<1.
\]
The interval-ledger principle gives $\len(\gamma)\ge R$.  Finally,
\[
 R-D=\frac{2568807751}{339444080300000}>0.
\]
The replay program reconstructs the orders and verifies every assertion in
this paragraph from the rational files.
\end{proof}

\begin{theorem}[Exact independent two-delimiter ledger]
\label[theorem]{thm:two-delimiter}
Assume both near anchors genuinely fail:
\[
 Z_2\precT F\precT M\precT T\precT Z_1.
\]
Let the right and left delimiter gaps satisfy
\[
 F\precT U\precT T\precT V,
 \qquad W\precT F\precT X\precT T.
\]
For every independent pair $(t_R,t_L)\in[-t_*,t_*]^2$ and every compatible
weak deque order of all marked contacts,
\[
 \len(\gamma)\ge R>D.
\]
\end{theorem}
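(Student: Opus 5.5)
The argument mirrors the proof of \Cref{thm:one-delimiter}, now with two independent moving rows. The plan has four steps.

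First, list the mass-bearing labels in clockwise fan order, splitting the upper arc at both delimiter gaps. Both moving rows \eqref{eq:right-moving-row} and \eqref{eq:left-moving-row} are appended to the four fixed rows \eqref{eq:fixed-delimiter-rows}. The delimiter normal $N_R$ sits on the edge $UV$ between $M$ and $Z_1$, and $N_L$ on $WX$ between $Z_2$ and $M$. The companion atoms $A_R,B_R,A_L,B_L$ sit at their normal positions on the lower part of the fan, interleaved with $E_1,\dots,E_4$. The polar-angle ranges of \Cref{lem:delimiter-gap} and \Cref{cor:left-delimiter} make this placement independent of $(t_R,t_L)$ within each cell.

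Second, generate the admissible temporal orders. Take the deque (cyclic bitonic) orders of \Cref{prop:bitonic} and impose the incidences:
\begin{itemize}
\item $Z_2\precT F\precT M\precT T\precT Z_1$;
\item $F\precT U\precT T\precT V$;
\item $W\precT F\precT X\precT T$;
\item the far relations of \Cref{cor:reflected-far}.
\end{itemize}
Enumeration then yields a finite list of strict marked orders; the abstract reports 4096 two-delimiter orders. Weak coincidences are handled as before: all labels at one temporal vertex share a spatial contact, so any strict refinement gives the same support sum. The reflection $\mathcal R$ combined with temporal reversal exchanges the roles of $(t_R,t_L)$. Together with \Cref{lem:reversal}, this symmetry reduces the parameter domain to a fundamental region of $[-\widehat t,\widehat t]^2$, for example $t_R\ge t_L$ after symmetrizing orders. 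I would use this only to halve the certificate and not rely on it logically.

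Third, cover the signed square $[-\widehat t,\widehat t]^2$, which contains $[-t_*,t_*]^2$, by finitely many closed rational rectangles. Coordinatewise monotonicity of $S$ and $C$ on each side of $0$ gives rational boxes for $C_R,S_R,C_L,S_L$. Multiplying by the outward interval for $1/\sqrt S$ from \eqref{eq:two-sided-root} then gives boxes containing all six moving atoms.

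On each rectangle and for each order, exhibit a stored calibration with the following data:
\begin{itemize}
\item nonnegative rational weights on the six rows, summing to $R/\rho$;
\item rational splits of the $d$-mass between $F$ and $T$, of the $N_R$-mass between $U$ and $V$, and of the $N_L$-mass between $W$ and $X$.
\end{itemize}
These splits preserve balance and support sums, because both endpoints of an exposed edge attain the same support value. Every row has support sum at least $\rho$, so the weighted right-hand sides total $R$. If all interval-arithmetic squared suffix norms are below $1$, \Cref{lem:ledger} with radius one gives $R\le\sum_j h_H(v_j)\le\len(\gamma)$. The exact computation of $R-D>0$ is the same as in \Cref{thm:one-delimiter}.

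The main obstacle is the third step: a uniform rectangle cover for which every order admits a certificate. Where a delimiter normal approaches $(0,1)$ or an anchor, as $t\to\pm t_*$, suffix norms come close to $1$, and a single rectangle may need several calibrations. I would first reuse the one-delimiter weights and add a small left-row weight, with splits chosen adaptively by a linear program on the rectangle's corner boxes. Rectangles where this fails would be subdivided adaptively. All final inequalities are checked only in exact rationals by the replay verifier, which reconstructs the orders independently of the stored data.
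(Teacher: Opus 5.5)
Your proposal is essentially the paper's proof. It uses the same fixed fan of mass-bearing labels, the 4096 deque orders, and a rational rectangle cover of the signed square, where each cell and order gets a stored calibration on the six rows (weights summing to $R/\rho$, endpoint splits of the $d$-, $N_R$- and $N_L$-masses), checked by outward interval arithmetic and the radius-one directed ledger. The paper simply reports the finished certificate: 736, 256 and 256 cells in three quadrants, coverage checked by containment, disjoint interiors and an exact area identity, and global bound $W_2<999938/10^6$.

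One correction: you cannot both halve the certificate by the reflection/time-reversal symmetry and ``not rely on it logically.'' The paper does rely on it, transporting the negative--positive cells to the positive--negative quadrant. It justifies this by the exact identities $\mathcal R A_R(t)=A_L(t)$, $\mathcal R B_R(t)=B_L(t)$, $\mathcal R N_R(t)=N_L(t)$, the weight permutation $(w_0,\dots,w_5)\mapsto(w_1,w_0,w_3,w_2,w_5,w_4)$ with reflected endpoint splits, and \Cref{lem:reversal}. That exact transport is precisely what your halving would also require.
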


\begin{proof}
The mass-bearing fan is
\[
 F,B_L,E_2,E_3,A_R,Q_3,Z_2,W,X,M,U,V,Z_1,P_2,A_L,E_1,E_4,B_R,T.
\]
The deque rule and the displayed incidences generate exactly 4096 strict
orders.  No equality between $t_R$ and $t_L$ is imposed.

The positive--positive quadrant is tiled by 736 rational rectangles, the
negative--negative quadrant by 256, and the negative--positive quadrant by
256.  Reflection in $x=0$ followed by temporal reversal interchanges
$t_R,t_L$ and bijects the 4096 orders, so the same 256 cells certify the
positive--negative quadrant.  Thus there are 1504 rectangle instances over
the full square.  This last transport is exact, not an interval re-evaluation:
for $\mathcal R(x,y)=(-x,y)$, direct substitution in
\eqref{eq:right-moving-row}--\eqref{eq:left-moving-row} gives
\[
 \mathcal R A_R(t)=A_L(t),\qquad
 \mathcal R B_R(t)=B_L(t),\qquad
 \mathcal R N_R(t)=N_L(t).
\]
The fixed rows are exchanged in pairs, and the rational calibration data
transform by
\[
 (w_0,w_1,w_2,w_3,w_4,w_5)
 \longmapsto(w_1,w_0,w_3,w_2,w_5,w_4),
\]
with each endpoint split sent to the reflected endpoint.  Reversal invariance
from \Cref{lem:reversal} then preserves every suffix norm exactly.

In each stored instance and for each order, one calibration has all squared
suffix norms below one; the reflected instances inherit the same bounds by
the displayed isometry.  The global exact upper bound satisfies
\[
 W_2<\frac{999938}{1000000}<1.
\]

For every stored quadrant the verifier checks containment, pairwise disjoint
rectangle interiors, and the exact area identity equalling the quadrant area.
A finite union of closed rectangles with full area and disjoint interiors
cannot omit a point: a nonempty relative-open complement would have positive
area.  Hence the cells cover the full signed square.  The interval-ledger
principle gives $\len(\gamma)\ge R>D$.
\end{proof}

\section{Unconditional Bellman and Moser theorems}\label{sec:conditional}

\begin{theorem}[Unconditional Bellman lower bound]
\label[theorem]{thm:unconditional-Bellman}
\label[theorem]{thm:conditional-Bellman}
The high-angle triangle $T$ satisfies
\[
 E(T)\ge D=\frac{82074390584}{84861020075}.
\]
\end{theorem}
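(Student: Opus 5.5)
The proof is by contradiction, assembling the earlier reductions. Suppose $E(T)<D$. Then there is a rectifiable escape path of length $<D$, and \Cref{prop:standardize} (with $C=D$) gives an integer $N$ and a standard polygonal escape path $\eta$ of length $L<D$ that is length-minimal among polygonal escape paths with at most $N$ segments. The rest of the argument shows that every such $\eta$ has length at least $D$. Two cases arise, according to whether $\eta$ is convex.

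\emph{Convex case.} Here the links of $\eta$ follow one boundary chain of its hull $H$. I would apply the directed ledger \Cref{lem:ledger} to the folded family \eqref{eq:balance-family}. Along a single convex chain, the exposing vertices occur in cyclic fan order. Cutting the fan at the omitted edge gives a cyclic rotation of \eqref{eq:cyclic-vectors}, and its suffix sums are partial arcs of a balanced convex polygon. I expect these to lie among the 275 states of \Cref{lem:exact-ledger}, or at least to obey the same bound; the case $k=\ell=0$ is one candidate. Then
\[
 B_*\le\sum_jh_H(v_j)\le\Rhat L ,
\]
so $L\ge B_*/\Rhat$. The closing check is the exact rational inequality $B_*/\Rhat\ge D$, which should be the identity defining $D$. \textbf{Main obstacle.} If not every cut of the convex chain matches an $O_{k,\ell}$, I would handle the convex chain through the delimiter machinery of \Cref{sec:delimiter} as well. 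This identification is the step where I expect the real work.

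\emph{Nonconvex case.} First invoke \Cref{prop:lambda-gap} to obtain the selected gap $FT$, the opposite vertex $M$, and the bound \eqref{eq:lambda-height}. Then \Cref{prop:one-sided} and \Cref{cor:reflected-far} give $Z_2\precT T$ and $F\precT Z_1$, and \Cref{lem:weak-fan-order} places the contacts in weak fan order. Now split on the two near-anchor relations \eqref{eq:anchor-time}.
\begin{enumerate}
\item If both relations hold, \Cref{lem:anchored-sweep} and \Cref{lem:fan-ledger-interface} reduce to the 25 orders. \Cref{lem:exact-ledger} then gives $L\ge B_*/\Rhat\ge D$, using the same scalar identity as in the convex case.
\item If exactly one relation fails, say $Z_1\precT T$ fails, then $M\precT T\precT Z_1$ (using $M\precT T$ and the fact that the contacts are distinct in time after strict refinement). \Cref{lem:delimiter-gap} supplies the right delimiter, and \Cref{thm:one-delimiter} gives $L\ge R>D$. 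If instead the left relation fails, reflect via $\mathcal R$ and reverse time, as in \Cref{cor:reflected-far}, to reduce to the right case.
\item If both relations fail, \Cref{lem:delimiter-gap} and \Cref{cor:left-delimiter} give both delimiters, and \Cref{thm:two-delimiter} again gives $L\ge R>D$.
\end{enumerate}

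In every case $L\ge D$, contradicting $L<D$. Hence $E(T)\ge D$.
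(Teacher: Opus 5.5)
Your nonconvex branch follows the paper's argument. The far relations come from \Cref{cor:reflected-far}. When both relations in \eqref{eq:anchor-time} hold, you use the 25 orders with $D=B_*/\Rhat$. Otherwise you reflect and reverse time so the right relation fails, then apply \Cref{lem:delimiter-gap} with \Cref{thm:one-delimiter}, or, when also $Z_2\precT F$, \Cref{cor:left-delimiter} with \Cref{thm:two-delimiter}.

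The genuine gap is the convex case. You leave it open, and both of your tentative resolutions fail.

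Suppose the calibration and the path are both kept fixed. Then the fan is cut wherever the normal of the omitted edge happens to lie. The suffix states of such a cyclic rotation are the vertices of the closed balanced polygon traced by the partial sums of \eqref{eq:cyclic-vectors}, measured from the cut vertex. They need not be among the 275 states and need not obey $\Rhat$. For instance, if the omitted normal lies strictly between $Q_4$ and $d$, the temporal order is $d,d,P_1,\dots,Q_4$ (or its reversal). The suffix after the two $d$-masses is then $-2d$, of norm $2$. A cut between $Z_1$ and $Z_2$ produces the state $d+d+P_1+\dots+Z_1$, of norm about $1.48$.

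The fallback through \Cref{sec:delimiter} is not available either. Those certificates are built on the selected $\Lambda$-configuration $F\precT M\precT T$ and on the delimiter incidences of \Cref{lem:delimiter-gap}. A convex path provides none of these.

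The missing idea is the rotational normalization you already accept in the nonconvex case, where the selected gap is placed with outward normal $d$. Escape is invariant under rotating the path; equivalently, \eqref{eq:iso-support} holds for every $\phi$, so the calibration may be rotated instead. Rotate the convex path until its unique omitted hull edge, the chord joining its two endpoints, has outward normal exactly $d=(0,-1)$.

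Both copies of $d$ are then exposed by that edge. Assign $d_1$ to the initial vertex and $d_2$ to the terminal vertex; both have the same support value in direction $d$. The remaining contacts occur either in counterclockwise fan order, which is precisely $O_{0,0}$, or in clockwise order, which is its reversal and is handled by \Cref{lem:reversal}. \Cref{lem:exact-ledger} then gives $B_*\le\Rhat L$, i.e., $L\ge D$, which closes the convex case.
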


\begin{proof}
Suppose an escape path has length $<D$.  By \Cref{prop:standardize}, choose a
standard polygonal escape minimizer $\gamma$ of length $L<D$ within a fixed
segment class.  If $\gamma$ is convex, its support contacts have one normal-fan
order (or its reversal), covered by $O_{0,0}$ and \Cref{lem:reversal}.  If it
is nonconvex, choose the selected gap and anchor contacts.  The far relations
$Z_2\precT T$ and $F\precT Z_1$ hold by \Cref{cor:reflected-far}.

If both inner relations \eqref{eq:anchor-time} hold,
\Cref{lem:fan-ledger-interface,lem:exact-ledger} and the four-source support
mass give
\[
 B_*\le\sum_jh_H(v_j)\le\Rhat L,
 \qquad
 L\ge\frac{B_*}{\Rhat}=D,
\]
a contradiction.

Otherwise reflect the configuration and reverse time if needed so that the
right relation genuinely fails, $T\precT Z_1$; coincident labels may be assigned
to the inner branch because their vector masses act at the same point.
\Cref{lem:delimiter-gap} supplies $UV$ with
$F\precT U\precT T\precT V$.  If $F\precT Z_2$, apply
\Cref{thm:one-delimiter}.  If $Z_2\precT F$, then
\Cref{cor:left-delimiter} also supplies $WX$ and
\Cref{thm:two-delimiter} applies.  Both alternatives give
$L\ge R>D$, again a contradiction.  These cases are exhaustive, so
$E(T)\ge D$.
\end{proof}

\begin{corollary}[Unconditional triangular universal cover]
\label[corollary]{cor:unconditional-cover}
\label[corollary]{cor:conditional-cover}
Then $D^{-1}T$ contains a congruent copy of
every rectifiable planar unit arc and
\[
 M\le M_{\rm conv}\le\area(D^{-1}T)
 =\frac{11511821678274125}{44639604443512928}
 =0.257883595112076188\ldots<0.257884.
\]
\end{corollary}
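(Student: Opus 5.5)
The corollary follows from \Cref{thm:unconditional-Bellman} and \Cref{prop:scaling}, plus an exact area computation. Since $T$ is a compact convex body with nonempty interior and $E(T)\ge D>0$, \Cref{prop:scaling} with $K=T$ and $L=D$ shows that $D^{-1}T$ contains a congruent copy of every unit worm. This homothet is itself compact and convex, so it is admissible in the definition of $M_{\rm conv}$. Combining this with $M\le M_{\rm conv}$ from the introduction gives
\[
 M\le M_{\rm conv}\le \area(D^{-1}T)=\frac{\area(T)}{D^2}.
\]

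What remains is exact arithmetic. From \Cref{sec:geometry} we have $\area(T)=sc=pq/S=150902/625565$; no radical survives, since the two factors $\sqrt S$ in $s$ and $c$ combine to $S$. We also have $D^2=82074390584^2/84861020075^2$. The quotient is therefore
\[
 \frac{150902\cdot 84861020075^2}{625565\cdot 82074390584^2},
\]
which I would reduce by cancelling common factors. For example, $84861020075$ should share factors of $5$ and $197$, $766$, or $625565$ with the denominator, so the reduced fraction should be the stated $11511821678274125/44639604443512928$. The reduction can be checked by cross-multiplying both fractions as integers.

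Finally, the decimal expansion and the strict inequality $<0.257884$ follow from the single integer comparison
\[
 11511821678274125\cdot 10^6<257884\cdot 44639604443512928.
\]

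The only real obstacle is bookkeeping, namely confirming that the printed reduced fraction equals $sc/D^2$ exactly. All the geometric content has already been discharged by \Cref{thm:unconditional-Bellman}. I would do this check with the same exact rational replay used elsewhere in the paper rather than by hand.
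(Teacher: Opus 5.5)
Your proposal is correct and is essentially the paper's proof: \Cref{prop:scaling} with $K=T$, $L=D$ combined with \Cref{thm:unconditional-Bellman}, followed by exact evaluation of $sc/D^2$ and an integer comparison equivalent to the paper's check against $64471/250000=257884/10^6$. You do not need to guess the cancellations: $84861020075=5\cdot625565\cdot27131$ and $82074390584=4\cdot150902\cdot135973$, so $sc/D^2=5^2\cdot625565\cdot27131^2/(2^5\cdot75451\cdot135973^2)$, which multiplies out to exactly the printed fraction (note that your single comparison certifies only the strict bound $<0.257884$, while the displayed decimal digits come from direct division).
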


\begin{proof}
Apply \Cref{prop:scaling,thm:unconditional-Bellman}.  Since
$\area(T)=150902/625565$, exact rational reduction gives the displayed
fraction.  The strict comparison is certified by
\[
 \frac{64471}{250000}-
 \frac{11511821678274125}{44639604443512928}
 =\frac{282406824420693}{697493819429889500000}>0.
\]
\end{proof}

\medskip
For a concrete geometric visualization of the homothet in
\Cref{cor:conditional-cover}, set
\[
 A_0=(0,s/D),\qquad L_0=(-c/D,0),\qquad R_0=(c/D,0).
\]
The two equal sides of $D^{-1}T$ have length $1/D$.  

\begin{figure}[htbp]
    \centering
    \includegraphics[width=0.25\linewidth]{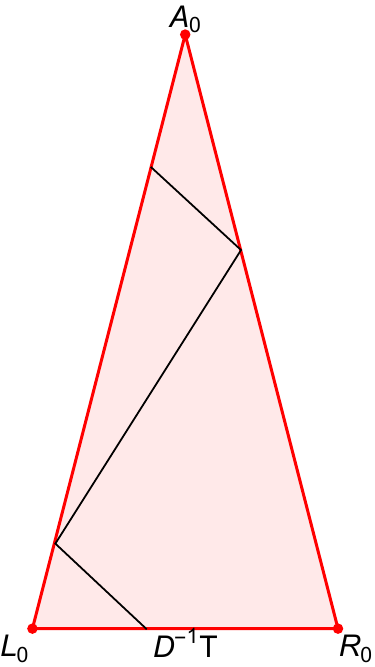}
\caption{The homothetic triangle $D^{-1}T$ appearing in
\Cref{cor:conditional-cover}.  The depicted worm is only an
illustrative feasible placement, not an asserted extremal escape path.  The
universal cover statement follows from the unconditional Bellman lower bound,
not from this one placement.}\label{fig:conditional-cover}
\end{figure}

\section{Marked-$Z$ branch and triangle-restricted optimization}\label{sec:branches}

\subsection{An ancillary exact marked-$Z$ algebraic value}

For comparison with the all-order calculation, define
\[
 t:=\sin\beta\cos\beta,
 \qquad
 Q_Z(\beta):=\frac{5-4\sqrt{1-9t^2}}{36t}.
\]
This is the closed algebraic value produced by the marked three-segment $Z$
ansatz; it is not used in any proof of the unconditional all-worm theorem.  At
the exact angle of this paper,
\[
 t=\frac{150902}{625565},\qquad
 1-9t^2=\frac{186388846789}{625565^2},
\]
so direct substitution gives
\[
 Q_Z=\frac{3127825-4\sqrt{186388846789}}{5432472}
 =0.25787781610863537955\ldots .
\]
The rational comparison is exact:
\[
 \sqrt{186388846789}>\frac{4317277}{10}
\]
because
\[
 186388846789-\left(\frac{4317277}{10}\right)^2
 =\frac{3984171}{100}>0,
\]
and therefore
\[
 Q_Z<\frac{128939}{500000}=0.257878.
\]
The Lean appendix checks precisely this radical comparison.

\begin{remark}[Scope of the marked-$Z$ value]
The displayed quantity is an algebraic branch value for the specified marked
ansatz.  This paper does not use it to assert $E(T)$, does not assert equality
with a triangle-restricted optimum, and does not claim an unrestricted
universal cover of area $Q_Z$.
\end{remark}

\subsection{Triangle-restricted optimization}

For a unit-base triangle with base angles $\alpha,\beta>0$ and
$\alpha+\beta<\pi$, write $E(\alpha,\beta)$ for its Bellman escape threshold.
By \Cref{prop:quotient}, the triangle-restricted convex quotient is
\[
 M_{\triangle}:=
 \inf_{\substack{\alpha,\beta>0\\\alpha+\beta<\pi}}
 \frac{\sin\alpha\sin\beta}
 {2\sin(\alpha+\beta)E(\alpha,\beta)^2}.
\]
The unconditional \Cref{cor:unconditional-cover} gives
\[
 M_{\triangle}\le0.257883595112076188\ldots .
\]
No equality statement is made.

A rigorous global optimization has asymmetric certification directions.  At a
prospective winner, a calibrated lower bound $D_0\le E(\alpha,\beta)$ yields the
upper quotient $A(\alpha,\beta)/D_0^2$.  On a competing angle box, an
independently continuum-certified escape path of length
$U\ge E(\alpha,\beta)$ yields the lower quotient
\[
 \frac{A(\alpha,\beta)}{E(\alpha,\beta)^2}
 \ge \frac{A(\alpha,\beta)}{U^2}.
\]
Consequently an interval branch-and-bound scheme can eliminate angle boxes
whose certified lower quotient exceeds the incumbent.  The difficult analytic
step in the high-angle isosceles family is not the scalar arithmetic displayed
here but the global certification of all competing temporal branches.
\section{Convex-body duality and certified computational hierarchies}\label{sec:duality}

\subsection{Balanced circuits for convex polygons}

Let
\[
 K=\bigcap_{j=1}^m\{x\in\RR^2:n_j\cdot x\le p_j\}
\]
be a compact convex polygon with outward unit normals $n_j$.  For a compact
convex set $H$ and fixed rotation $R_\theta$, a translation $z$ places
$R_\theta H$ in $K$ exactly when
\[
 n_j\cdot z\le p_j-h_H(R_{-\theta}n_j),\qquad j=1,\dots,m.
\]

\begin{theorem}[Polygonal balanced-circuit criterion]\label[theorem]{thm:polygon-circuits}
The preceding translation system is feasible if and only if, for every
$\lambda\in\RR_+^m$ satisfying $\sum_j\lambda_jn_j=0$,
\[
 \sum_j\lambda_jh_H(R_{-\theta}n_j)
 \le\sum_j\lambda_jp_j.
\]
It is enough to check the extreme positive balanced circuits, each supported
on at most three facet normals.
\end{theorem}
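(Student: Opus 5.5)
The plan is to treat the fixed-orientation feasibility question as a linear system $Nz\le r$, with $N z=(n_1\cdot z,\dots,n_m\cdot z)$ and $r_j=p_j-h_H(R_{-\theta}n_j)$, and apply Farkas' lemma. This is the non-strict analogue of the explicit argument in the proof of \Cref{thm:tri-support}.

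\emph{Necessity.} If $z$ is feasible and $\lambda\ge0$ satisfies $\sum_j\lambda_jn_j=0$, then
\[
0=\lambda\cdot Nz\le\lambda\cdot r .
\]
This is exactly the displayed inequality.

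\emph{Sufficiency.} Farkas' lemma (the alternative for $Nz\le r$) says the system is infeasible if and only if there is $\lambda\ge0$ with $N^{\mathsf T}\lambda=0$ and $\lambda\cdot r<0$. Such a $\lambda$ is a balanced positive weighting violating the inequality, so the inequality holding for every balanced $\lambda$ forces feasibility. I would cite the standard Farkas lemma rather than reprove it. If a self-contained route is preferred, Fourier--Motzkin elimination of the two coordinates of $z$ gives the same conclusion, since every derived inequality is a nonnegative combination with zero $z$-coefficient.

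\emph{Reduction to circuits.} The balanced cone
\[
\Lambda=\Bigl\{\lambda\in\RR_+^m:\sum_j\lambda_jn_j=0\Bigr\}
\]
is a pointed polyhedral cone. It is pointed because the normals of a bounded polygon positively span $\RR^2$, and nonzero $\lambda\ge0$ can never have $-\lambda\ge0$. Hence $\Lambda$ is generated by its extreme rays. The inequality $\lambda\cdot r\ge0$ is linear in $\lambda$, so it suffices to check it on these generators. An extreme ray has minimal support, and its support vectors $\{n_j\}$ are minimally linearly dependent in $\RR^2$. A minimal dependent set in $\RR^2$ has at most three elements: three normals in general position, or two antiparallel normals (which is possible only if $K$ has parallel sides). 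Since the $n_j$ are unit vectors, no support of size one occurs. This yields the ``at most three facet normals'' clause. For the Carath\'eodory-type step I would use the standard fact that a ray of $\Lambda$ is extreme exactly when the columns indexed by its support are minimally dependent, equivalently when the kernel of $N^{\mathsf T}$ restricted to that support is one-dimensional.

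The main obstacle is bookkeeping rather than substance. Parallel or duplicated normals must be treated carefully: duplicate constraints with the same normal give two-element circuits $(n_j,n_k)$ with $n_j=n_k$ and mixed signs, which are not positive, so I would assume the $n_j$ are distinct. I would also stress that the statement concerns closed (non-strict) feasibility, as opposed to the strict version in \Cref{thm:tri-support}, so plain Farkas applies with no $\eps$-shift.
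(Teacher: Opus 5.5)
Your proposal is correct and follows essentially the same route as the paper. The Farkas alternative for $Nz\le r$ gives the first assertion, and your characterization of extreme rays of the balanced cone as vectors whose support is a circuit (a minimally dependent set) in $\RR^2$ is the same mechanism as the paper's perturbation argument, in which four or more positive coordinates would admit a non-proportional signed dependence that splits the ray. Your extra assumption that the $n_j$ are distinct is harmless but unnecessary, since the mixed-sign dependences among duplicated normals never lie in $\RR_+^m$.
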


\begin{proof}
The first assertion is the standard Farkas alternative for the finite system.
For completeness, infeasibility of $Az\le r$ is equivalent to the existence of
$\lambda\ge0$ with $A^T\lambda=0$ and $\lambda\cdot r<0$; substituting
$r_j=p_j-h_H(R_{-\theta}n_j)$ gives the displayed inequality.

The cone
\[
 C:=\{\lambda\in\RR_+^m:\textstyle\sum_j\lambda_jn_j=0\}
\]
is polyhedral and is generated by its extreme rays.  If an extreme ray had at
least four positive coordinates, the corresponding columns in $\RR^2$ would
have a nonzero signed dependence supported on those coordinates that is not
proportional to the ray.  A sufficiently small perturbation in both signs
would preserve nonnegativity and balance, decomposing the ray and contradicting
extremality.  Thus an extreme balanced circuit uses at most three normals.
\end{proof}

\subsection{Balanced pairs and triplets for arbitrary convex bodies}

\begin{theorem}[Balanced-pair/triplet translation criterion]\label[theorem]{thm:body-circuits}
Let $K,H\subset\RR^2$ be compact convex sets with $\interior K\ne\varnothing$,
and fix $\theta$.  There exists $z$ with $z+R_\theta H\subset K$ if and only if
for every $r\in\{2,3\}$, every $u_1,\dots,u_r\in\SSS$, and every
$\lambda_i>0$ satisfying $\sum_i\lambda_i u_i=0$,
\[
 \sum_i\lambda_i h_H(R_{-\theta}u_i)
 \le\sum_i\lambda_i h_K(u_i).
\]
\end{theorem}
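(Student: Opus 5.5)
Necessity is immediate.  If $z+R_\theta H\subset K$, then for every unit vector $u$ we have $h_{R_\theta H}(u)+z\cdot u\le h_K(u)$, and $h_{R_\theta H}(u)=h_H(R_{-\theta}u)$.  Multiplying by $\lambda_i>0$ and summing, the translation terms cancel because $\sum_i\lambda_iu_i=0$.  All the work is therefore in sufficiency, which I would prove by reducing to the polygonal case and invoking \Cref{thm:polygon-circuits}.

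\textbf{Approximation step.}  Write $H'=R_\theta H$, and fix a finite set $U_n\subset\SSS$ of directions that becomes dense as $n\to\infty$ and contains two antipodal pairs, so that the polygon below is bounded.  Define
\[
K_n:=\bigcap_{u\in U_n}\{x: u\cdot x\le h_K(u)\}\supset K.
\]
Each $K_n$ is a compact polygon with (a subset of) the normals in $U_n$ and offsets $p_u=h_K(u)$; redundant constraints only add harmless inequalities.  The hypothesis, applied to the extreme positive balanced circuits (pairs and triples) on these normals, is exactly the condition required by \Cref{thm:polygon-circuits}.  That theorem therefore gives $z_n$ with $z_n+H'\subset K_n$.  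Since $K_n\to K$ in Hausdorff distance as $U_n$ becomes dense, the $z_n$ stay bounded.  Passing to a convergent subsequence $z_n\to z$, and using that $K=\bigcap_n K_n$ for nested refining $U_n$, we get $z+H'\subset K$ because every point of $z+H'$ is a limit of points of $z_n+H'\subset K_m$ for all $n\ge m$.

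\textbf{Main obstacle.}  The delicate point is that \Cref{thm:polygon-circuits} is stated for unit normals with offsets exactly equal to the facet data, including redundant normals; the hypothesis, however, only supplies inequalities with $h_K(u)$.  The Farkas form in the proof of \Cref{thm:polygon-circuits} applies verbatim to any finite system $u\cdot z\le h_K(u)-h_{H'}(u)$, $u\in U_n$, so I would invoke that form directly rather than a facet description.  I would also check the reduction of extreme rays to at most three normals.  In that reduction a pair must be antipodal, which gives $r=2$, and a triple has positive coefficients with $0$ in the interior of its convex hull, which gives $r=3$.  Both cases are covered by the hypothesis.  The remaining checks are boundedness of the $z_n$ and the nesting $K=\bigcap_n K_n$; nesting follows by taking $U_n$ increasing, and boundedness holds because $K_1$ is compact.
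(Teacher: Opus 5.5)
Your proof is correct, but it takes a different route from the paper.

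\textbf{Your route.} You discretize the circle of directions. On a nested finite net $U_n$ you apply the finite Farkas alternative underlying \Cref{thm:polygon-circuits} to the system $u\cdot z\le h_K(u)-h_H(R_{-\theta}u)$, $u\in U_n$. You rightly observe that facet-ness is never used, and that the extreme rays of the balanced cone are antipodal pairs or positive triples, all covered by the hypothesis. This gives $z_n$ with $z_n+R_\theta H\subset K_n$, and you pass to a limit using compactness of $K_1$ and $\bigcap_nK_n=K$. For that last identity you should state explicitly that $\bigcup_nU_n$ is dense and $h_K$ is continuous. Also, the two antipodal pairs in $U_1$ must be non-parallel.

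\textbf{The paper's route.} The paper never discretizes. It minimizes the coercive convex function $F(z)=\max_{u\in\SSS}\{z\cdot u-g(u)\}$, where $g(u)=h_K(u)-h_H(R_{-\theta}u)$. At a minimizer $z_*$ the subdifferential is the convex hull of the compact active set $A$, so $0\in\conv A$. Carath\'eodory then extracts a positively balanced pair or triple of active directions that violates the hypothesis.

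\textbf{What each buys.} Your argument needs only finite LP duality, which the paper already proves, plus an elementary limit. It avoids the subdifferential formula for a maximum over an infinite compact family. Its contrapositive also shows that any obstruction is already witnessed by a pair or triple drawn from a sufficiently fine finite direction net, in the spirit of the hierarchies of \Cref{sec:duality}. The paper's argument instead locates the obstruction at an optimal translation and quantifies it. The normalized violating circuit has deficit exactly $-\sum_i\lambda_ig(u_i)=\min F$. This yields the identity $\min_zF(z)=\max$ over normalized balanced circuits of $-\sum_i\lambda_ig(u_i)$, with both sides attained.
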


\begin{proof}
Necessity follows by multiplying
\[
 z\cdot u\le h_K(u)-h_H(R_{-\theta}u)
\]
by balanced positive weights and summing.

For sufficiency, put
\[
 g(u):=h_K(u)-h_H(R_{-\theta}u),\qquad
 F(z):=\max_{u\in\SSS}\{z\cdot u-g(u)\}.
\]
The continuous function $g$ is bounded, and
$F(z)\ge\|z\|-\|g\|_\infty$, so $F$ is coercive and attains a minimum at some
$z_*$.  A translation exists exactly when $\min F\le0$.  If no translation
exists, $m:=F(z_*)>0$.

Let $A\subset\SSS$ be the compact active set at $z_*$.  The subdifferential of
this maximum of affine functions is $\conv A$, so optimality gives
$0\in\conv A$.  Choose a representation of $0$ using the smallest possible
number of active directions.  Caratheodory's theorem gives at most three; one
direction cannot positively balance in $\SSS$, hence there are two or three
and all coefficients in a minimal representation are positive.  Normalize
them to sum to one.  For every active $u_i$,
\[
 z_*\cdot u_i-g(u_i)=m.
\]
Weighted summation cancels the translation term and gives
$-\sum_i\lambda_i g(u_i)=m>0$, contradicting the assumed balanced inequality.
\end{proof}

Translate $K$ so that $0\in\interior K$ and define
\[
 \sigma_K(H):=\inf\{r>0:\exists\theta,z\text{ with }z+R_\theta H\subset rK\}.
\]
After normalizing $\sum_i\lambda_i=1$, \Cref{thm:body-circuits} yields
\begin{equation}\label{eq:sigma-minmax}
 \sigma_K(H)=\min_{\theta\in\SSS}
 \max_{\substack{r\in\{2,3\},\ u_i\in\SSS,\ \lambda_i\ge0\\
                  \sum_i\lambda_i=1,\ \sum_i\lambda_i u_i=0}}
 \frac{\sum_i\lambda_i h_H(R_{-\theta}u_i)}
      {\sum_i\lambda_i h_K(u_i)}.
\end{equation}
Zero coefficients merely reduce a triplet to a pair.  The parameter set in the
inner maximum is compact.  Since $0\in\interior K$, there is $r_0>0$ with
$r_0B\subset K$, hence $h_K(u)\ge r_0$ on $\SSS$ and the denominator in
\eqref{eq:sigma-minmax} is uniformly positive.  The maximum therefore depends
continuously on $\theta$, so the outer minimum is attained.

Define
\[
 \Sigma(K):=\sup_{\len(\gamma)=1}\sigma_K(\conv\gamma).
\]

\begin{proposition}\label[proposition]{prop:E-Sigma}
For every compact convex body $K$ with nonempty interior,
\[
 0<\Sigma(K)<\infty,\qquad
 E(K)=\frac1{\Sigma(K)},\qquad
 M_{\rm conv}=\inf_K\area(K)\Sigma(K)^2.
\]
\end{proposition}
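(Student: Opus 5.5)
The plan is to relate the scaling $\sigma_K$ to placeability in $\interior K$, and then to compare $\Sigma(K)$ with $E(K)$ through the homogeneity of $\sigma_K$ under dilation of the path. Translate $K$ so that $0\in\interior K$.

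\textbf{Step 1: homogeneity and the open-scaling equivalence.} Since $h_{t\conv\gamma}=t\,h_{\conv\gamma}$ for $t>0$, formula \eqref{eq:sigma-minmax} gives $\sigma_K(tH)=t\,\sigma_K(H)$. Next I would prove, for compact convex $H$,
\[
 \exists\,\theta,z:\ z+R_\theta H\subset\interior K\iff\sigma_K(H)<1.
\]
For ($\Leftarrow$): the outer minimum in \eqref{eq:sigma-minmax} is attained, so some $z+R_\theta H\subset rK$ with $r<1$. Since $0\in\interior K$, one has $rK\subset\interior K$: each $rk$ lies in the open segment from $0$ to $k$, and that segment sits in the interior by the ball argument used in \Cref{prop:quotient}. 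For ($\Rightarrow$): the placed compact set $z+R_\theta H$ lies at positive distance from $\partial K$. It is therefore contained in $(1-\eps)K$ after a small adjustment, because $(1-\eps)K$ converges to $K$ in Hausdorff distance and contains every compact subset of $\interior K$ for small $\eps$; this gives $\sigma_K(H)\le1-\eps$. Since $h_\gamma=h_{\conv\gamma}$, placeability of an arc is equivalent to placeability of its hull.

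\textbf{Step 2: finiteness and positivity of $\Sigma$.} For a unit arc, $\conv\gamma$ lies in a disk of radius $1$ about any of its points, so $\sigma_K(\conv\gamma)\le \sigma_K(\overline B(0,1))\le 1/r_0$, where $r_0B\subset K$. Hence $\Sigma(K)<\infty$. For positivity, a unit segment has a hull of diameter $1$, while $rK$ has diameter $r\operatorname{diam}K$. This forces $\sigma_K\ge1/\operatorname{diam}K>0$.

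\textbf{Step 3: $E=1/\Sigma$.} An arc $\Gamma$ of length $\ell$ equals $\ell\gamma$ with $\len\gamma=1$, so by Step 1 it is placeable exactly when $\ell\,\sigma_K(\conv\gamma)<1$. If $\ell<1/\Sigma$, every $\gamma$ gives $\ell\sigma_K(\conv\gamma)\le\ell\Sigma<1$, so $\Gamma$ is placeable and $E\ge1/\Sigma$. Conversely, for $\eps>0$ choose a unit $\gamma$ with $\sigma_K(\conv\gamma)>\Sigma-\eps$. Then the arc $\Gamma=\sigma_K(\conv\gamma)^{-1}\gamma$ has $\sigma_K(\conv\Gamma)=1$, so it is an escape path of length less than $1/(\Sigma-\eps)$. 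Letting $\eps\to0$ gives $E\le1/\Sigma$.

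\textbf{Step 4: the Moser formula.} This follows by substituting $E=1/\Sigma$ into \Cref{prop:quotient}.

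The main obstacle is the ($\Rightarrow$) direction of Step 1: showing that interior placement yields a strict scale factor below $1$. This needs $0\in\interior K$ and a uniform margin. I would handle it with the Minkowski gauge $g_K$, which is continuous and satisfies $\interior K=\{g_K<1\}$. Its maximum over the compact placed set is then strictly below $1$, and that maximum gives the factor $r$ directly.
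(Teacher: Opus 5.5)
Your proof is correct and follows essentially the same route as the paper: inradius and diameter bounds give $0<\Sigma(K)<\infty$, homogeneity of $\sigma_K$ together with $rK\subset\interior K$ for $r<1$ gives $E(K)\ge1/\Sigma(K)$, the definition of the supremum gives the reverse inequality, and substitution into \Cref{prop:quotient} gives the Moser formula. The one difference is in the reverse inequality: the paper takes $L>1/\Sigma(K)$ and a unit arc $\omega$ with $\sigma_K(\conv(L\omega))>1$, so $L\omega$ does not fit even in $K$; this avoids the direction ``interior placement $\Rightarrow\sigma_K<1$'', so your gauge argument is valid but not actually needed.
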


\begin{proof}
Normalize $0\in\interior K$.  Choose $r_0,R_0>0$ with
$r_0\overline B\subset K\subset R_0\overline B$.  Every unit-length arc has
diameter at most one and, after translating one of its points to the origin,
is contained in $\overline B$; hence
$\sigma_K(\conv\gamma)\le1/r_0$ and $\Sigma(K)<\infty$.  On the other hand a
unit segment cannot fit in $rK$ unless
$r\,\operatorname{diam}(K)\ge1$, so $\Sigma(K)>0$.

If $\gamma$ has length $L$ and $L\Sigma(K)<1$, homogeneity gives
$\sigma_K(\conv\gamma)\le L\Sigma(K)<1$.  Thus $\gamma$ fits in some $rK$
with $r<1$.  Because $0\in\interior K$, convexity gives
$rK\subset\interior K$ for every $0\le r<1$, so $\gamma$ is not an escape
path.  Therefore $E(K)\ge1/\Sigma(K)$.

Conversely, if $L>1/\Sigma(K)$, then $1/L<\Sigma(K)$, so by the definition of
the supremum there is a unit arc $\omega$ with
$\sigma_K(\conv\omega)>1/L$.  Homogeneity gives
$\sigma_K(\conv(L\omega))>1$, so $L\omega$ does not fit in $K$ and is an
escape path.  Thus $E(K)\le L$ for every $L>1/\Sigma(K)$, proving the first
identity.  Substitution into \Cref{prop:quotient} gives the second.
\end{proof}

\subsection{Certified computational hierarchies}

For a fixed triangle, let $\Theta_m$ be nested finite angle sets with dense
union and minimize path length subject only to the exact support inequalities
at the angles in $\Theta_m$.  Fix one path point at the origin.  A single
continuum escape path supplies a finite common upper bound $L_0$ on all these
finite optima.  Parametrize every competitor of length at most $L_0$ at
constant speed on $[0,1]$; the resulting family is uniformly bounded and
$L_0$-Lipschitz.

For each $m$, the direct method gives an attained finite optimum.  Indeed, a
minimizing sequence has a uniformly convergent subsequence by Arzela--Ascoli;
uniform convergence of curves implies Hausdorff convergence of their images
and convex hulls, hence convergence of every fixed support value.  The finitely
many constraints are therefore closed, while rectifiable length is lower
semicontinuous under uniform convergence.  The limit is feasible and attains
the infimum.

Let $e_m$ denote these optima.  Then $e_m$ is monotone increasing and
$e_m\le E(T)$.  If $e_m\uparrow e$, choose minimizers along any subsequence
with indices tending to infinity and extract a uniformly convergent
subsequence.  Every angle in the dense union eventually belongs to all later
constraint sets, so the limit satisfies the support inequality on that dense
union and, by continuity in angle, at every angle.  It is therefore a
continuum escape path.  Lower semicontinuity gives $E(T)\le e$, whereas
$e\le E(T)$ by construction.  Hence $e_m\uparrow E(T)$.  Exact finite solves
thus form a certified lower hierarchy, provided each finite solve itself is
certified exactly.

The transformed-boundary formulation gives a second hierarchy.  Its general
optimization framework is introduced in \cite{DengGeneral}; finite
transformed-boundary/TSPN equivalence and dense-state convergence are developed
in \cite{DengProof}.  A finite primal incumbent is not automatically a
continuum upper certificate: full-state feasibility must be established
independently.

There is also a conceptual lower hierarchy for the full convex worm problem.
Choose a countable sequence of unit worms $\omega_1,\omega_2,\dots$, all based
at the origin, dense in the uniform topology, and include among the first terms
one worm whose convex hull has nonempty interior.  Put
\[
 a_m:=\inf\{\area(K):K\text{ compact, convex, and covering }
 \omega_1,\dots,\omega_m\}.
\]
Then $a_m$ is increasing and $a_m\le M_{\rm conv}$.  In fact,
\begin{proposition}\label[proposition]{prop:dense-worm-hierarchy}
\[
 a_m\uparrow M_{\rm conv}.
\]
\end{proposition}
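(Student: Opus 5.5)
The claim is $a_m\uparrow M_{\rm conv}$. Monotonicity and $a_m\le M_{\rm conv}$ are immediate, since each $a_m$ is an infimum over a shrinking family of constraints that every universal cover satisfies. So $a:=\lim a_m\le M_{\rm conv}$ exists, and the task is the reverse inequality $M_{\rm conv}\le a$. I will argue by compactness: pick near-optimal covers, pass to a limit that covers every unit worm, and use continuity of area.

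\emph{Step 1: near-minimizers in a bounded region.} For each $m$ choose a compact convex $K_m$ covering $\omega_1,\dots,\omega_m$ with $\area(K_m)\le a_m+1/m$. Each $K_m$ contains a copy of a unit worm, so it meets a congruent copy of a set of diameter at most one. To get boundedness, translate $K_m$ so that the placed copy of $\omega_1$ has its base point at the origin. Then every point of $K_m$ I actually need lies close to the origin. Unbounded thin $K_m$ are a nuisance, but I will replace $K_m$ by $K_m\cap\overline B(0,2)$. This intersection is still convex with no larger area, and it still covers each $\omega_j$, $j\le m$, provided the placement of $\omega_j$ lies in $\overline B(0,2)$. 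That fails in general, since different worms are placed at different locations. Instead I keep $K_m$ and bound its diameter directly: it contains a placed copy of the fixed worm with nonempty-interior hull (one of the first terms), so $K_m$ contains a fixed-size triangle up to rigid motion. A convex set of area at most $a_1+1$ containing a triangle of inradius $r>0$ has diameter bounded by roughly $2(a_1+1)/r$. After normalizing by a rigid motion, all $K_m$ therefore lie in a fixed disk.

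\emph{Step 2: limit and covering.} By Blaschke selection, pass to a subsequence $K_m\to K$ in the Hausdorff metric, with $K$ compact and convex, and use continuity of area on convex bodies in the plane to get $\area(K)=\lim\area(K_m)=a$. Fix $j$. For $m\ge j$ there are rigid motions $g_m$ with $g_m\omega_j\subset K_m$. The translations are bounded because everything lies in a fixed disk, so a subsequence gives $g_m\to g$ and hence $g\omega_j\subset K$, since $K$ is closed. Thus $K$ covers every $\omega_j$. For an arbitrary unit worm $\omega$, choose $\omega_{j_k}\to\omega$ uniformly, take placements $g_k\omega_{j_k}\subset K$, and extract a convergent subsequence $g_k\to g$; uniform convergence gives $g\omega\subset K$. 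So $K$ is a convex universal cover, and $M_{\rm conv}\le\area(K)=a$.

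\emph{Main obstacle.} The delicate point is Step 1, the uniform boundedness needed for Blaschke selection, together with making sure a degenerate limit causes no trouble. A degenerate $K$ is not actually a problem: $K$ contains a copy of the nondegenerate-hull worm, so it has nonempty interior anyway. For the diameter bound I first try the elementary estimate that a convex set containing a disk of radius $r$ and a point at distance $\ell$ from its center has area at least $r\ell$, roughly. This shows that diameters are bounded by a constant times $(a_1+1)/r$, after translating so the inscribed disk is centered at the origin.
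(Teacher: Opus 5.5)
Your proposal is correct and follows essentially the paper's argument. You pick near-minimizers, normalize them using the placed worm with nondegenerate hull, bound their diameters by area, apply Blaschke selection, pass placements to the limit by compactness of rigid motions and density to obtain a universal limit cover, and conclude by continuity of area. One small slip: since $a_m$ increases, the uniform area bound should be $\sup_m(a_m+1/m)\le M_{\rm conv}+1$ rather than $a_1+1$, and the normalization applies only once $m$ exceeds the index of the distinguished worm. Neither point affects the argument.
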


\begin{proof}
Let $A=\lim_m a_m\le M_{\rm conv}$.  For each $m$ choose $K_m$ with
$\area(K_m)\le a_m+1/m$ and fix, by a rigid normalization, one placement of
the distinguished worm $\omega_1$ inside $K_m$.  Its convex hull contains a
fixed closed disk $B(x_0,r_0)$ after this normalization.  Since the areas of
$K_m$ are uniformly bounded, their diameters are uniformly bounded: if a point
of $K_m$ were at distance $R$ from $x_0$, the convex hull of that point with
the fixed disk would contain a triangle of area at least
$r_0(R-r_0)$, forcing $R$ to be bounded.

Blaschke selection gives a Hausdorff-convergent subsequence
$K_{m_j}\to K$.  For each fixed $i$, all sufficiently large $K_{m_j}$ cover
$\omega_i$.  Because $\omega_i(0)=0$, the corresponding translation vectors
lie in the uniformly bounded sets $K_{m_j}$; rotations lie in compact
$SO(2)$.  Passing to a subsequence gives a placement of $\omega_i$ in $K$.
A diagonal argument therefore shows that $K$ covers the whole dense sequence.
If an arbitrary unit worm $\omega$ is the uniform limit of a subsequence
$\omega_{i_j}$, compactness of the associated rigid motions again yields a
placement of $\omega$ in $K$.  Thus $K$ is a convex universal cover.
Hausdorff convergence of convex bodies in the plane implies convergence of
area, so
\[
 M_{\rm conv}\le\area(K)=A\le M_{\rm conv}.
\]
Hence $A=M_{\rm conv}$.
\end{proof}

\section{Verification, reproducibility, and conclusion}\label{sec:verification}

\subsection{Separation of proof obligations}

The proof has three separately auditable layers.  The ordinary geometric layer
contains Bellman--Moser compactness, the exact support criterion,
standardization, the selected $\Lambda$-gap, cyclic bitonicity, the side-meeting
estimate, and the delimiter-gap lemma.  These propositions reduce an arbitrary
shorter escape path to the three branches in
\Cref{thm:unconditional-Bellman}; no near-anchor closure is assumed.

The finite layer is reconstructed by the two standard-library Python sources
included in \Cref{app:verifier}.  They verify the four original source rows,
the 25 inner orders and their 275 suffix states, and the complete signed
delimiter families.  In particular, the checker reconstructs the 512 and 4096
deque-order sets rather than loading order lists; checks the two-sided radical
enclosure and the full signed half-angle range; verifies nonnegative exact
weight sums and every exposed-face split; evaluates every suffix by outward
rational interval arithmetic in the three stored quadrants; checks the
one-dimensional concatenations and the two-dimensional rectangle tilings; and
verifies the exact vector, weight, endpoint-split, and order bijections used to
transport the fourth quadrant.  The checker imports no numerical optimizer and
uses only Python integers, \texttt{fractions.Fraction}, JSON, and
multiprocessing.

The Lean 4/Mathlib source included in \Cref{app:lean} contains no
\texttt{sorry}, \texttt{admit}, or declared axiom.  It formalizes the radical
and scalar identities, the 13 original ledger inequalities, the exact target
and area comparisons, the fixed endpoint ledger, and the exact three-link
delimiter inequality.  The current execution environment has no Lean or Lake
executable, so no claim of local kernel compilation is made.  Nor is the Lean
file described as an end-to-end formalization of the continuum geometry or of
the 1540-interval/1504-rectangle dataset.  The unconditional theorem is an
ordinary mathematical proof with an independently replayable exact finite
lemma; full Lean formalization of every planar-topological step remains a
distinct future formalization project.

\subsection{Conclusion}

The delimiter-gap construction and the signed interval ledgers close the
previous near-anchor gap.  Consequently the high-angle triangle satisfies
\[
 E(T)\ge D=\frac{82074390584}{84861020075}
\]
unconditionally, and its homothet gives the exact area quotient
\[
 \frac{11511821678274125}{44639604443512928}
 =0.257883595112076188\ldots<0.257884.
\]
The old 25-order calibration is retained unchanged on the inner branch.  Its
two complementary branches are certified at the stronger rational level
$R=96717/100000>D$.  The signed parameter audit is essential: restricting the
delimiter parameters to a nonnegative quarter-square would not cover all
normals permitted by the geometric lemma.

\appendix
\section{The thirteen exact ledger norms}\label{app:norms}

Every squared norm has the form
\[
  R^2=a\sqrt{625565}+b,\qquad a,b\in\mathbb Q,\quad a\ge0.
\]
The exact coefficients and multiplicities are listed below.  This coefficient
form is also the representation used by the symbolic verifier and the Lean
arithmetic certificate.

{\scriptsize
\setlength{\tabcolsep}{4pt}
\renewcommand{\arraystretch}{1.35}
\begin{longtable}{r >{\raggedright\arraybackslash}p{0.29\textwidth} >{\raggedright\arraybackslash}p{0.43\textwidth} r}
\caption{Distinct squared suffix norms $a\sqrt{625565}+b$ in the 25 certified orders.}\label{tab:norms}\\
\toprule
No. & $a$ & $b$ & mult.\\
\midrule
\endfirsthead
\toprule
No. & $a$ & $b$ & mult.\\
\midrule
\endhead
1 & $\displaystyle \frac{4202659}{19548906250}$ & $\displaystyle \frac{15745406729}{15625000000}$ & 90\\
2 & $\displaystyle \frac{1123086690137192516}{3825052079722455078125}$ & $\displaystyle \frac{904160125381283376899831241}{956263019930613769531250000}$ & 25\\
3 & $\displaystyle \frac{561543345068596258}{3825052079722455078125}$ & $\displaystyle \frac{970110685862755896431081241}{956263019930613769531250000}$ & 50\\
4 & $0$ & $1$ & 10\\
5 & $\displaystyle \frac{561543345068596258}{3825052079722455078125}$ & $\displaystyle \frac{1013937269245224338708663349}{3825052079722455078125000000}$ & 10\\
6 & $\displaystyle \frac{561543345068596258}{3825052079722455078125}$ & $\displaystyle \frac{251147460776720642524831241}{956263019930613769531250000}$ & 10\\
7 & $\displaystyle \frac{326201986262}{6114555769140625}$ & $\displaystyle \frac{500176947085377}{1954890625000000}$ & 10\\
8 & $\displaystyle \frac{326201986262}{6114555769140625}$ & $\displaystyle \frac{611063489745761}{2443613281250000}$ & 10\\
9 & $0$ & $\displaystyle \frac{500176947085377}{1954890625000000}$ & 10\\
10 & $0$ & $\displaystyle \frac{155236}{625565}$ & 10\\
11 & $0$ & $\displaystyle \frac{64738089866910276208663349}{3825052079722455078125000000}$ & 10\\
12 & $0$ & $\displaystyle \frac{120406729}{15625000000}$ & 20\\
13 & $0$ & $\displaystyle \frac{4672864745761}{2443613281250000}$ & 10\\
\bottomrule
\end{longtable}
}
The multiplicities sum to $275$.

\section{A compact order-theoretic audit}\label{app:order-audit}

The following five-rank example explains why the delimiter branches are
necessary.  In normal-fan order take
\[
 F<Z_L<M<Z_R<T,
\]
with temporal ranks
\[
 t(F)=1,\qquad t(Z_L)=0,\qquad t(M)=2,\qquad t(Z_R)=3,\qquad t(T)=4.
\]
Then $F\precT M\precT T$, $Z_L\precT T$, and $F\precT Z_R$; the boundary ranks
are decreasing then increasing, hence are compatible with the three-phase
bitonic structure.  Nevertheless $F\not\precT Z_L$.  Thus the two rigorously proved far-endpoint inequalities alone cannot imply the
near-endpoint temporal closure.  The proof of
\Cref{thm:unconditional-Bellman} does not make that inference: this failure is
sent to \Cref{lem:delimiter-gap,thm:one-delimiter,thm:two-delimiter}.

\section{Proof-only Lean 4/Mathlib formalization}\label{app:lean}

All Lean theorem proofs used by this submission are collected in the single
companion source
\texttt{MoserTriangularCertificate\_FormalProofs.lean}.  The purpose of this
file is to expose the formal arguments themselves rather than to embed the
large generated finite certificate. 

The file contains, in order, the planar metric and order-theoretic proofs and
the exact scalar/radical and fixed-ledger proofs.  In particular, it formalizes
metric deletion and strict uncrossing, the endpoint-peeling/deque core, the
three-phase anchored sweep, the temporal core of the right delimiter argument,
the exact identity
\[
 \frac{547947}{625565}
 =\frac{766^2-197^2}{766^2+197^2},
\]
the selected-$\Lambda$ conclusion from explicit global-$\Lambda$ and two-gap
hypotheses, the thirteen original ledger inequalities, the threshold and area
identities, the endpoint three-link estimate, and the displayed fixed-delimiter
suffix ledger.

The source declares no \texttt{axiom} and contains no \texttt{sorry} or
\texttt{admit}.  It also contains no \texttt{native\_decide}: after removal of
the generated finite dataset there is no large Boolean replay theorem inside
Lean.  This distinction is intentional.  The global planar
$\Lambda$-configuration theorem and the compactness/extreme-point bookkeeping
needed for the complete continuum standardization theorem have not been
silently converted into axioms; where required, they remain explicit
hypotheses of the formal interface.  Consequently the companion file should be
read as a formalization of the stated theorem-proof components, not as a claim
that every continuum and finite-data component of the entire paper has been
kernel-formalized in Lean.

\begin{lstlisting}[style=LeanCode]
import Mathlib

/-!
**# Formal theorem proofs for the high-angle Moser triangle**
This single Lean 4/Mathlib source contains the formalized theorem proofs used
for the planar/order and exact scalar/fixed-ledger parts of the manuscript.
It intentionally does ****not**** embed the large signed-delimiter rational dataset
or the 17,622 lines generated from the JSON interval/rectangle certificates.
Those finite data remain supplementary certificate files and are replayed by
the separate exact Python verifier described in the manuscript appendix.
Formalization boundary.  This source declares no `axiom` and contains no
`sorry` or `admit`.  It formalizes metric deletion and uncrossing lemmas,
endpoint/deque order facts, the three-phase delimiter-order core, the exact
delimiter-angle identity, the selected-Lambda implication from explicit
geometric hypotheses, the radical and ledger inequalities, endpoint and
fixed-delimiter ledgers, and the exact threshold/area arithmetic.
The global planar Lambda-configuration theorem and the compactness/extreme-point
bookkeeping needed for an end-to-end continuum standardization theorem are not
silently postulated; where required they remain explicit hypotheses.  Likewise,
the exhaustive signed interval/rectangle dataset verification is not asserted
as a Lean theorem in this proof-only source.
-/

namespace MoserTriangularPlanarUpgrade

noncomputable section

/-! **## 1. Metric core of standardization** -/

variable {V P : Type*} [NormedAddCommGroup V] [NormedSpace ℝ V]

variable [StrictConvexSpace ℝ V] [MetricSpace P] [NormedAddTorsor V P]

/-- Deleting an interior temporal vertex and replacing its two incident links

by their chord cannot increase length. -/
theorem deletion_not_longer (A B C : P) :
    dist A C ≤ dist A B + dist B C :=
  dist_triangle A B C

/-- Strict metric core of the planar uncrossing surgery. -/
theorem uncrossing_strictly_shorter
    {D E F G X : P}
    (hDE : Wbtw ℝ D X E)
    (hFG : Wbtw ℝ F X G)
    (hnc : ¬ Wbtw ℝ D X F) :
    dist D F + dist E G < dist D E + dist F G := by
  have h1 : dist D X + dist X E = dist D E := hDE.dist_add_dist
  have h2 : dist F X + dist X G = dist F G := hFG.dist_add_dist
  have h3 : dist D F < dist D X + dist X F := by
    rcases (dist_triangle D X F).lt_or_eq with h | h
    · exact h
    · exact absurd (dist_add_dist_eq_iff.mp h.symm) hnc
  have h4 : dist E G ≤ dist E X + dist X G := dist_triangle E X G
  have h5 : dist E X = dist X E := dist_comm E X
  have h6 : dist X F = dist F X := dist_comm X F
  linarith

/-- The strict inequality in the exact form used against a length minimizer. -/
theorem not_minimal_of_crossing
    {D E F G X : P} {rest : ℝ}
    (hDE : Wbtw ℝ D X E)
    (hFG : Wbtw ℝ F X G)
    (hnc : ¬ Wbtw ℝ D X F) :
    rest + (dist D F + dist E G) < rest + (dist D E + dist F G) := by
  have h := uncrossing_strictly_shorter hDE hFG hnc
  linarith

/-! **## 2. Endpoint peeling / deque combinatorics** -/

/-- Boolean encoding of the open cyclic arc `(0,m)`. -/
def inFirstArc (m a : ℕ) : Bool := 0 < a && a < m

/-- A two-valued function taking two different values on a finite integer

interval changes across some adjacent pair. -/
theorem exists_adjacent_change
    {f : ℕ → Bool} {lo hi i j : ℕ}
    (hilo : lo ≤ i) (hihi : i ≤ hi)
    (hjlo : lo ≤ j) (hjhi : j ≤ hi)
    (hne : f i ≠ f j) :
    ∃ k, lo ≤ k ∧ k + 1 ≤ hi ∧ f k ≠ f (k + 1) := by
  by_contra hcon
  push Not at hcon
  have const : ∀ d u, lo ≤ u → u + d ≤ hi → f u = f (u + d) := by
    intro d
    induction d with
    | zero =>
        intro u _ _
        rfl
    | succ e ih =>
        intro u hu hle
        have h1 : f u = f (u + e) := ih u hu (by omega)
        have h2 : f (u + e) = f (u + e + 1) := hcon (u + e) (by omega) (by omega)
        have h3 : u + (e + 1) = u + e + 1 := by omega
        rw [h3, h1, h2]
  refine hne ?_
  rcases le_total i j with h | h
  · have h' := const (j - i) i hilo (by omega)
    rwa [Nat.add_sub_cancel' h] at h'
  · have h' := const (i - j) j hjlo (by omega)
    rw [Nat.add_sub_cancel' h] at h'
    exact h'.symm

/-- Combinatorial core of the first-step hull-edge/deque argument.  The geometric

interleaving criterion for convex-polygon chords is isolated in `hnc`. -/
theorem first_step_is_hull_edge
    {n m : ℕ} (W : ℕ → ℕ)
    (hm : 0 < m) (hmn : m < n)
    (hW0 : W 0 = 0) (hW1 : W 1 = m)
    (hsurj : ∀ v, v < n → ∃ i, i < n ∧ W i = v)
    (hnc : ∀ k, 2 ≤ k → k + 1 < n →
      inFirstArc m (W k) = inFirstArc m (W (k + 1))) :
    m = 1 ∨ m = n - 1 := by
  by_contra hcon
  push Not at hcon
  obtain ⟨h1, h2⟩ := hcon
  have hm1 : 1 < m := lt_of_le_of_ne hm (Ne.symm h1)
  have hmn1 : m < n - 1 := lt_of_le_of_ne (by omega) h2
  obtain ⟨i, hi_lt, hi_eq⟩ := hsurj 1 (by omega)
  obtain ⟨j, hj_lt, hj_eq⟩ := hsurj (n - 1) (by omega)
  have hi2 : 2 ≤ i := by
    rcases Nat.lt_or_ge i 2 with h | h
    · interval_cases i
      · rw [hW0] at hi_eq; omega
      · rw [hW1] at hi_eq; omega
    · exact h
  have hj2 : 2 ≤ j := by
    rcases Nat.lt_or_ge j 2 with h | h
    · interval_cases j
      · rw [hW0] at hj_eq; omega
      · rw [hW1] at hj_eq; omega
    · exact h
  have hfi : inFirstArc m (W i) = true := by
    rw [hi_eq]
    simp [inFirstArc]
    omega
  have hfj : inFirstArc m (W j) = false := by
    rw [hj_eq]
    simp [inFirstArc]
    omega
  have hne : inFirstArc m (W i) ≠ inFirstArc m (W j) := by
    rw [hfi, hfj]
    simp
  obtain ⟨k, hk_lo, hk_hi, hk⟩ :=
    exists_adjacent_change (f := fun t => inFirstArc m (W t))
      hi2 (by omega : i ≤ n - 1) hj2 (by omega : j ≤ n - 1) hne
  exact hk (hnc k hk_lo (by omega))

/-! **## 3. Three-phase fan sweep** -/

open Set

/-- Abstract order structure obtained by cutting cyclic bitonicity at a gap. -/
structure ThreePhaseSweep {ι α : Type*} [LinearOrder ι] [LinearOrder α]
    (τ : ι → α) (F low high T : ι) : Prop where

  F_le_low : F ≤ low

  low_le_high : low ≤ high

  high_le_T : high ≤ T

  left : AntitoneOn τ (Icc F low)

  middle : MonotoneOn τ (Icc low high)

  right : AntitoneOn τ (Icc high T)

/-- Inner anchors necessarily lie strictly inside the increasing phase. -/
theorem ThreePhaseSweep.anchors_force_central
    {ι α : Type*} [LinearOrder ι] [LinearOrder α]
    {τ : ι → α} {F low z₁ M z₂ high T : ι}
    (h : ThreePhaseSweep τ F low high T)
    (hFz₁ : F < z₁) (hz₁M : z₁ ≤ M) (hMz₂ : M ≤ z₂) (hz₂T : z₂ < T)
    (hz₁Inner : τ F < τ z₁) (hz₂Inner : τ z₂ < τ T) :
    low < z₁ ∧ z₂ < high ∧
    (∀ x ∈ Icc z₁ z₂, τ z₁ ≤ τ x ∧ τ x ≤ τ z₂) := by
  have hFz₁le : F ≤ z₁ := hFz₁.le
  have hz₂Tle : z₂ ≤ T := hz₂T.le
  have hlow_z₁ : low < z₁ := by
    by_contra hn
    have hz₁low : z₁ ≤ low := le_of_not_gt hn
    have hle : τ z₁ ≤ τ F :=
      h.left ⟨le_rfl, h.F_le_low⟩ ⟨hFz₁le, hz₁low⟩ hFz₁le
    exact (not_le_of_gt hz₁Inner) hle
  have hz₂_high : z₂ < high := by
    by_contra hn
    have hhighz₂ : high ≤ z₂ := le_of_not_gt hn
    have hle : τ T ≤ τ z₂ :=
      h.right ⟨hhighz₂, hz₂Tle⟩ ⟨h.high_le_T, le_rfl⟩ hz₂Tle
    exact (not_le_of_gt hz₂Inner) hle
  have hcentral : ∀ x ∈ Icc z₁ z₂, τ z₁ ≤ τ x ∧ τ x ≤ τ z₂ := by
    intro x hx
    have hz₁high : z₁ ≤ high := hz₁M.trans (hMz₂.trans hz₂_high.le)
    have hlowz₂ : low ≤ z₂ := hlow_z₁.le.trans (hz₁M.trans hMz₂)
    constructor
    · exact h.middle ⟨hlow_z₁.le, hz₁high⟩
        ⟨hlow_z₁.le.trans hx.1, hx.2.trans hz₂_high.le⟩ hx.1
    · exact h.middle ⟨hlow_z₁.le.trans hx.1, hx.2.trans hz₂_high.le⟩
        ⟨hlowz₂, hz₂_high.le⟩ hx.2
  exact ⟨hlow_z₁, hz₂_high, hcentral⟩

/-! **## 4. Delimiter-gap logical core** -/

/-- Data needed after the first blue-to-red adjacent pair `U,V` has been found.

`separation_excluded` is precisely the deque/noncrossing contradiction used in

the manuscript to rule out `U ≺ F`. -/
structure RightDelimiterOrderData (τF τM τT τU τV : ℕ) : Prop where

  F_before_M : τF < τM

  M_before_T : τM < τT

  U_before_T : τU < τT

  T_before_V : τT < τV

  separation_excluded : ¬ (τU < τF)

  all_distinct : τU ≠ τF

/-- Exact temporal-order conclusion of the right delimiter lemma once adjacency

and the deque separation contradiction have been established. -/
theorem right_delimiter_temporal_order
    {τF τM τT τU τV : ℕ}
    (h : RightDelimiterOrderData τF τM τT τU τV) :
    τF < τU ∧ τU < τT ∧ τT < τV := by
  have hFUle : τF ≤ τU := le_of_not_gt h.separation_excluded
  have hFUne : τF ≠ τU := Ne.symm h.all_distinct
  have hFU : τF < τU := lt_of_le_of_ne hFUle hFUne
  exact ⟨hFU, h.U_before_T, h.T_before_V⟩

/-- The exact high-angle normal identity used at the end of the delimiter lemma. -/
theorem delimiter_cosine_identity :
    (547947 : ℝ) / 625565 =
      ((766 : ℝ)^2 - (197 : ℝ)^2) / ((766 : ℝ)^2 + (197 : ℝ)^2) := by
  norm_num

/-! **## 5. Selected Λ-gap: exact no-axiom interface** -/

abbrev Plane := EuclideanSpace ℝ (Fin 2)

/-- Normalized witness asserted by the selected-Λ-gap proposition. -/
structure SelectedLambdaWitness where

  F : Plane

  T : Plane

  M : Plane

  xF : ℝ

  xT : ℝ

  xM : ℝ

  h : ℝ

  h_pos : 0 < h

  F_coord : F = ![xF, 0]

  T_coord : T = ![xT, 0]

  M_coord : M = ![xM, h]

  spatial_order : xF ≤ xT

  temporal_FM : Prop

  temporal_MT : Prop

  is_gap : Prop

/-- Logical packaging of the external/global planar input.  A future end-to-end

formalization must prove this predicate from a concrete simple nonconvex

standard polygonal minimizer, rather than assume it. -/
def HasSelectedLambdaConfiguration (η : Type*) (_length : η → ℝ) (_γ : η) : Prop :=
  ∃ w : SelectedLambdaWitness, w.temporal_FM ∧ w.temporal_MT ∧ w.is_gap

/-- Logical packaging of the local two-gap estimate. -/
def SatisfiesTwoGapEstimate (η : Type*) (length : η → ℝ) (γ : η) : Prop :=
  ∀ w : SelectedLambdaWitness,
    w.temporal_FM → w.temporal_MT → w.is_gap →
    (1 + Real.sqrt 2) * w.h ≤ length γ

/-- Selected-Λ-gap conclusion from the two named mathematical inputs.  This is

not an axiom: both inputs are explicit hypotheses, so the theorem cannot be

misread as an end-to-end formalization of the external Λ theorem. -/
theorem selectedLambdaGap_of_inputs
    {η : Type*} {length : η → ℝ} {γ : η}
    (hΛ : HasSelectedLambdaConfiguration η length γ)
    (h2 : SatisfiesTwoGapEstimate η length γ) :
    ∃ w : SelectedLambdaWitness,
      w.temporal_FM ∧ w.temporal_MT ∧ w.is_gap ∧
      (1 + Real.sqrt 2) * w.h ≤ length γ := by
  rcases hΛ with ⟨w, hFM, hMT, hgap⟩
  exact ⟨w, hFM, hMT, hgap, h2 w hFM hMT hgap⟩

end

end MoserTriangularPlanarUpgrade

/-!

**# Exact arithmetic and fixed-ledger certificate for the high-angle triangle**
This section contains the formal scalar/radical, anchored-ledger, endpoint, and
fixed-delimiter proofs.  The large generated signed interval/rectangle dataset
is deliberately excluded from this Lean source; its exact finite replay is
provided separately by the supplementary Python verifier and JSON certificate
data.

-/

namespace MoserTriangularCertificate

noncomputable section

open Real

private theorem sqrt625565_sq : (Real.sqrt (625565 : ℝ)) ^ 2 = 625565 := by
  have h : (0 : ℝ) ≤ 625565 := by norm_num
  exact Real.sq_sqrt h

/-- One rational upper enclosure controls every radical ledger state. -/
theorem sqrt625565_lt :
    Real.sqrt (625565 : ℝ) < (790927 : ℝ) / 1000 := by
  exact (Real.sqrt_lt' (by norm_num : (0 : ℝ) < (790927 : ℝ) / 1000)).2 (by norm_num)

/-- Rational envelope used for every suffix norm in the 25 anchored orders. -/
def Rhat : ℝ := (27131 : ℝ) / 25000

/-- Exact worst squared suffix norm (multiplicity 90 in the symbolic replay). -/
def worstLedgerSq : ℝ :=
  ((4202659 : ℝ) / 19548906250) * Real.sqrt 625565
    + (15745406729 : ℝ) / 15625000000

theorem ledger01_lt : worstLedgerSq < Rhat^2 := by
  have h := sqrt625565_lt
  dsimp [worstLedgerSq, Rhat]
  nlinarith

theorem ledger02_lt :
    ((1123086690137192516 : ℝ) / 3825052079722455078125) * Real.sqrt 625565
      + (904160125381283376899831241 : ℝ) / 956263019930613769531250000
      < Rhat^2 := by
  have h := sqrt625565_lt
  dsimp [Rhat]
  nlinarith

theorem ledger03_lt :
    ((561543345068596258 : ℝ) / 3825052079722455078125) * Real.sqrt 625565
      + (970110685862755896431081241 : ℝ) / 956263019930613769531250000
      < Rhat^2 := by
  have h := sqrt625565_lt
  dsimp [Rhat]
  nlinarith

theorem ledger04_lt : (1 : ℝ) < Rhat^2 := by
  norm_num [Rhat]

theorem ledger05_lt :
    ((561543345068596258 : ℝ) / 3825052079722455078125) * Real.sqrt 625565
      + (1013937269245224338708663349 : ℝ) / 3825052079722455078125000000
      < Rhat^2 := by
  have h := sqrt625565_lt
  dsimp [Rhat]
  nlinarith

theorem ledger06_lt :
    ((561543345068596258 : ℝ) / 3825052079722455078125) * Real.sqrt 625565
      + (251147460776720642524831241 : ℝ) / 956263019930613769531250000
      < Rhat^2 := by
  have h := sqrt625565_lt
  dsimp [Rhat]
  nlinarith

theorem ledger07_lt :
    ((326201986262 : ℝ) / 6114555769140625) * Real.sqrt 625565
      + (500176947085377 : ℝ) / 1954890625000000
      < Rhat^2 := by
  have h := sqrt625565_lt
  dsimp [Rhat]
  nlinarith

theorem ledger08_lt :
    ((326201986262 : ℝ) / 6114555769140625) * Real.sqrt 625565
      + (611063489745761 : ℝ) / 2443613281250000
      < Rhat^2 := by
  have h := sqrt625565_lt
  dsimp [Rhat]
  nlinarith

theorem ledger09_lt :
    (500176947085377 : ℝ) / 1954890625000000 < Rhat^2 := by
  norm_num [Rhat]

theorem ledger10_lt : (155236 : ℝ) / 625565 < Rhat^2 := by
  norm_num [Rhat]

theorem ledger11_lt :
    (64738089866910276208663349 : ℝ) / 3825052079722455078125000000
      < Rhat^2 := by
  norm_num [Rhat]

theorem ledger12_lt :
    (120406729 : ℝ) / 15625000000 < Rhat^2 := by
  norm_num [Rhat]

theorem ledger13_lt :
    (4672864745761 : ℝ) / 2443613281250000 < Rhat^2 := by
  norm_num [Rhat]

/-- Integrated support mass of the four-source calibration. -/
def Bmass : ℝ := (10259298823 : ℝ) / 9774453125

/-- Exact threshold produced by the mass/radius quotient. -/
def DR : ℝ := (82074390584 : ℝ) / 84861020075

theorem DR_eq_mass_div_radius : DR = Bmass / Rhat := by
  norm_num [DR, Bmass, Rhat]

theorem DR_lt_one : DR < 1 := by
  norm_num [DR]

/-- Rational geometry of the unit-side high-angle realization. -/
def rho : ℝ := (301804 : ℝ) / 625565

def b : ℝ := (197 : ℝ) / 766

def A : ℝ := (547947 : ℝ) / 301804

def kappa : ℝ := (547947 : ℝ) / 625565

theorem geometry_identity : A + b = 1 / rho := by
  norm_num [A, b, rho]

theorem rho_gt_five_twelfths : (5 : ℝ) / 12 < rho := by
  norm_num [rho]

theorem A_gt_b : b < A := by
  norm_num [A, b]

theorem kappa_pos : 0 < kappa := by
  norm_num [kappa]

theorem A_eq_kappa_div_rho : A = kappa / rho := by
  norm_num [A, kappa, rho]

theorem rho_kappa_unit : rho^2 + kappa^2 = 1 := by
  norm_num [rho, kappa]

/-- Dual data for the first two one-sided anchor exclusion orders. -/
def X : ℝ := (483585 : ℝ) / 1000000

def P : ℝ := (875291 : ℝ) / 1000000

def Q : ℝ := (38347525561 : ℝ) / 150902000000

theorem Q_identity : Q = 2 * X / rho - 2 * P := by
  norm_num [Q, X, P, rho]

theorem anchor_q0_ball : X^2 + P^2 < 1 := by
  norm_num [X, P]

theorem anchor_q1_ball : X^2 + (2 * b * X - P)^2 < 1 := by
  norm_num [X, P, b]

theorem anchor_q3_ball : (2 * X)^2 + Q^2 < 1 := by
  norm_num [X, Q]

theorem anchor_positive_coefficient : P - b * X > 0 := by
  norm_num [X, P, b]

theorem dangerous_order_margin : DR < 2 * X := by
  norm_num [DR, X]

/-- Dual data for the third one-sided anchor exclusion order. -/
def a : ℝ := (9672 : ℝ) / 10000

theorem anchor_third_ball : a^2 * (1 + b^2) < 1 := by
  norm_num [a, b]

theorem anchor_third_positive : 2 + a * (b - A) > 0 := by
  norm_num [a, b, A]

theorem third_order_margin : DR < a := by
  norm_num [DR, a]

/-- The two elementary tail orders exceed `DR` after using `sqrt 2 > 7/5`. -/
theorem elementary_tail_margin :
    1 - ((36595 : ℝ) / 1207216) * DR > DR := by
  norm_num [DR]

/-- Unscaled equal-side-normalized triangle area. -/
def triangleArea : ℝ := (150902 : ℝ) / 625565

/-- Exact area quotient attached to `DR`.  Geometric universality is separate. -/
def areaBound : ℝ :=
  (11511821678274125 : ℝ) / 44639604443512928

theorem area_identity : areaBound = triangleArea / DR^2 := by
  norm_num [areaBound, triangleArea, DR]

theorem area_lt_0257884 : areaBound < (64471 : ℝ) / 250000 := by
  norm_num [areaBound]

/-- All thirteen exact ledger inequalities in one theorem. -/
theorem ledger_arithmetic_certificate :
    worstLedgerSq < Rhat^2 ∧
    ((1123086690137192516 : ℝ) / 3825052079722455078125) * Real.sqrt 625565
      + (904160125381283376899831241 : ℝ) / 956263019930613769531250000 < Rhat^2 ∧
    ((561543345068596258 : ℝ) / 3825052079722455078125) * Real.sqrt 625565
      + (970110685862755896431081241 : ℝ) / 956263019930613769531250000 < Rhat^2 ∧
    (1 : ℝ) < Rhat^2 ∧
    ((561543345068596258 : ℝ) / 3825052079722455078125) * Real.sqrt 625565
      + (1013937269245224338708663349 : ℝ) / 3825052079722455078125000000 < Rhat^2 ∧
    ((561543345068596258 : ℝ) / 3825052079722455078125) * Real.sqrt 625565
      + (251147460776720642524831241 : ℝ) / 956263019930613769531250000 < Rhat^2 ∧
    ((326201986262 : ℝ) / 6114555769140625) * Real.sqrt 625565
      + (500176947085377 : ℝ) / 1954890625000000 < Rhat^2 ∧
    ((326201986262 : ℝ) / 6114555769140625) * Real.sqrt 625565
      + (611063489745761 : ℝ) / 2443613281250000 < Rhat^2 ∧
    (500176947085377 : ℝ) / 1954890625000000 < Rhat^2 ∧
    (155236 : ℝ) / 625565 < Rhat^2 ∧
    (64738089866910276208663349 : ℝ) / 3825052079722455078125000000 < Rhat^2 ∧
    (120406729 : ℝ) / 15625000000 < Rhat^2 ∧
    (4672864745761 : ℝ) / 2443613281250000 < Rhat^2 := by
  exact ⟨ledger01_lt, ledger02_lt, ledger03_lt, ledger04_lt,
    ledger05_lt, ledger06_lt, ledger07_lt, ledger08_lt, ledger09_lt,
    ledger10_lt, ledger11_lt, ledger12_lt, ledger13_lt⟩

/-- Compact conjunction of the scalar obligations used by the conditional

    all-order theorem.  This remains an arithmetic certificate only. -/
theorem arithmetic_certificate :
    DR = Bmass / Rhat ∧
    DR < 1 ∧
    A + b = 1 / rho ∧
    (5 : ℝ) / 12 < rho ∧
    b < A ∧
    0 < kappa ∧
    A = kappa / rho ∧
    rho^2 + kappa^2 = 1 ∧
    Q = 2 * X / rho - 2 * P ∧
    X^2 + P^2 < 1 ∧
    X^2 + (2 * b * X - P)^2 < 1 ∧
    (2 * X)^2 + Q^2 < 1 ∧
    P - b * X > 0 ∧
    DR < 2 * X ∧
    a^2 * (1 + b^2) < 1 ∧
    2 + a * (b - A) > 0 ∧
    DR < a ∧
    1 - ((36595 : ℝ) / 1207216) * DR > DR ∧
    areaBound = triangleArea / DR^2 ∧
    areaBound < (64471 : ℝ) / 250000 := by
  exact ⟨DR_eq_mass_div_radius, DR_lt_one, geometry_identity,
    rho_gt_five_twelfths, A_gt_b, kappa_pos, A_eq_kappa_div_rho,
    rho_kappa_unit, Q_identity, anchor_q0_ball,
    anchor_q1_ball, anchor_q3_ball, anchor_positive_coefficient,
    dangerous_order_margin, anchor_third_ball, anchor_third_positive,
    third_order_margin, elementary_tail_margin, area_identity,
    area_lt_0257884⟩

/-! **## Exact marked-Z branch at the high-angle triangle** -/

private theorem sqrt186388846789_sq :
    (Real.sqrt (186388846789 : ℝ)) ^ 2 = 186388846789 := by
  have h : (0 : ℝ) ≤ 186388846789 := by norm_num
  exact Real.sq_sqrt h

theorem sqrt186388846789_gt :
    (4317277 : ℝ) / 10 < Real.sqrt (186388846789 : ℝ) := by
  exact Real.lt_sqrt_of_sq_lt (by norm_num)

def markedZRatio : ℝ :=
  ((3127825 : ℝ) - 4 * Real.sqrt 186388846789) / 5432472

theorem markedZ_lt_0257878 :
    markedZRatio < (128939 : ℝ) / 500000 := by
  have h := sqrt186388846789_gt
  dsimp [markedZRatio]
  nlinarith

/-! **## Two-sided radical and signed delimiter parameter data** -/

theorem sqrt625565_gt :
    (790926 : ℝ) / 1000 < Real.sqrt (625565 : ℝ) := by
  exact Real.lt_sqrt_of_sq_lt (by norm_num)

def delimiterR : ℝ := (96717 : ℝ) / 100000

def delimiterTHat : ℝ := (197000 : ℝ) / 1556926

theorem delimiterR_gt_DR : DR < delimiterR := by
  norm_num [DR, delimiterR]

theorem delimiter_tHat_pos : 0 < delimiterTHat := by
  norm_num [delimiterTHat]

/-! **## Exact scalar endpoint delimiter bound** -/

namespace DelimiterEndpoint

def certR : ℝ := delimiterR

def ry : ℝ := -(25413 : ℝ) / 100000

def qx : ℝ := certR / 2

def qy : ℝ := (certR / rho + ry) / 2

theorem r_norm_sq_le_one : certR ^ 2 + ry ^ 2 ≤ 1 := by
  norm_num [certR, delimiterR, ry]

theorem q_norm_sq_le_one : qx ^ 2 + qy ^ 2 ≤ 1 := by
  norm_num [qx, qy, certR, delimiterR, rho, ry]

theorem coefficient_x : 2 * qx = certR := by
  dsimp [qx]
  ring

theorem coefficient_h : 2 * qy - ry = certR / rho := by
  dsimp [qy]
  ring

lemma dot_le_sqrt_of_norm_sq_le_one
    {a b x y : ℝ} (hab : a ^ 2 + b ^ 2 ≤ 1) :
    a * x + b * y ≤ Real.sqrt (x ^ 2 + y ^ 2) := by
  have hxy : 0 ≤ x ^ 2 + y ^ 2 := by positivity
  have hsqrt_nonneg : 0 ≤ Real.sqrt (x ^ 2 + y ^ 2) := Real.sqrt_nonneg _
  have hsqrt_sq : (Real.sqrt (x ^ 2 + y ^ 2)) ^ 2 = x ^ 2 + y ^ 2 := by
    exact Real.sq_sqrt hxy
  have hcs : (a * x + b * y) ^ 2 ≤
      (a ^ 2 + b ^ 2) * (x ^ 2 + y ^ 2) := by
    nlinarith [sq_nonneg (a * y - b * x)]
  have hbound : (a ^ 2 + b ^ 2) * (x ^ 2 + y ^ 2) ≤ x ^ 2 + y ^ 2 := by
    have hdelta : 0 ≤ 1 - (a ^ 2 + b ^ 2) := by linarith
    have hprod : 0 ≤ (1 - (a ^ 2 + b ^ 2)) * (x ^ 2 + y ^ 2) :=
      mul_nonneg hdelta hxy
    nlinarith
  by_cases hsign : a * x + b * y ≤ 0
  · linarith
  · nlinarith

theorem endpoint_three_link_bound (x h : ℝ) :
    certR ≤
      2 * Real.sqrt (x ^ 2 + h ^ 2) +
        Real.sqrt ((1 - h / rho - x) ^ 2 + h ^ 2) := by
  have hq := dot_le_sqrt_of_norm_sq_le_one
    (a := qx) (b := qy) (x := x) (y := h) q_norm_sq_le_one
  have hr := dot_le_sqrt_of_norm_sq_le_one
    (a := certR) (b := ry) (x := 1 - h / rho - x) (y := -h)
    r_norm_sq_le_one
  have hr' :
      certR * (1 - h / rho - x) + ry * (-h) ≤
        Real.sqrt ((1 - h / rho - x) ^ 2 + h ^ 2) := by
    have hsq : (-h) ^ 2 = h ^ 2 := by ring
    rw [hsq] at hr
    exact hr
  have hsupport :
      2 * (qx * x + qy * h) +
          (certR * (1 - h / rho - x) + ry * (-h)) = certR := by
    dsimp [qx, qy, certR, delimiterR, rho, ry]
    ring
  calc
    certR =
        2 * (qx * x + qy * h) +
          (certR * (1 - h / rho - x) + ry * (-h)) := hsupport.symm
    _ ≤
        2 * Real.sqrt (x ^ 2 + h ^ 2) +
          Real.sqrt ((1 - h / rho - x) ^ 2 + h ^ 2) := by
      linarith [hq, hr']

theorem endpoint_three_link_strictly_above_DR (x h : ℝ) :
    DR <
      2 * Real.sqrt (x ^ 2 + h ^ 2) +
        Real.sqrt ((1 - h / rho - x) ^ 2 + h ^ 2) := by
  exact lt_of_lt_of_le delimiterR_gt_DR (endpoint_three_link_bound x h)

end DelimiterEndpoint

/-! **## Exact fixed endpoint ledger** -/

namespace DelimiterLedger

def twoCSq : ℝ := (77618 : ℝ) / 625565
def certR : ℝ := delimiterR
def tau : ℝ := certR / (2 * rho)
def alpha : ℝ := (6271 : ℝ) / 25000
def alphaBar : ℝ := (18729 : ℝ) / 25000
abbrev V := ℝ × ℝ
def add (x y : V) : V := (x.1 + y.1, x.2 + y.2)
def normSq (x : V) : ℝ := x.1 ^ 2 + x.2 ^ 2
def wE3 : V := (-tau * rho, -tau * twoCSq)
def wF : V := (0, -alphaBar * tau)
def wZ2 : V := (-tau * rho, tau * kappa)
def wM : V := (0, alpha * tau)
def wT : V := (0, -alpha * tau)
def wE4 : V := (tau * rho, -tau * kappa)
def wV : V := (0, alphaBar * tau)
def wP2 : V := (tau * rho, tau * twoCSq)
def sP2 : V := wP2
def sV : V := add wV sP2
def sE4 : V := add wE4 sV
def sT : V := add wT sE4
def sM : V := add wM sT
def sZ2 : V := add wZ2 sM
def sF : V := add wF sZ2
def sE3 : V := add wE3 sF

theorem alpha_partition : alpha + alphaBar = 1 := by
  norm_num [alpha, alphaBar]

theorem balanced : sE3 = (0, 0) := by
  apply Prod.ext <;>
    norm_num [sE3, sF, sZ2, sM, sT, sE4, sV, sP2, add,
      wE3, wF, wZ2, wM, wT, wE4, wV, wP2,
      tau, certR, delimiterR, rho, kappa, twoCSq, alpha, alphaBar]

theorem suffixP2_sq :
    normSq sP2 =
      (1170329283249057 : ℝ) / 4694048000000000 := by
  norm_num [normSq, sP2, wP2, tau, certR, delimiterR, rho, twoCSq]

theorem suffixV_sq :
    normSq sV =
      (91084554778489789323376697481 : ℝ) /
        91085654416000000000000000000 := by
  norm_num [normSq, sV, sP2, add, wV, wP2, tau, certR,
    delimiterR, rho, twoCSq, alphaBar]

theorem suffixE4_sq :
    normSq sE4 =
      (85203803876321218326994787481 : ℝ) /
        91085654416000000000000000000 := by
  norm_num [normSq, sE4, sV, sP2, add, wE4, wV, wP2, tau,
    certR, delimiterR, rho, kappa, twoCSq, alphaBar]

theorem suffixT_sq :
    normSq sT =
      (22771329899948337610927247481 : ℝ) /
        22771413604000000000000000000 := by
  norm_num [normSq, sT, sE4, sV, sP2, add, wT, wE4, wV, wP2,
    tau, certR, delimiterR, rho, kappa, twoCSq, alpha, alphaBar]

theorem suffixM_sq : normSq sM = normSq sE4 := by
  norm_num [normSq, sM, sT, sE4, sV, sP2, add, wM, wT, wE4,
    wV, wP2, tau, certR, delimiterR, rho, kappa, twoCSq,
    alpha, alphaBar]

theorem suffixZ2_sq : normSq sZ2 = normSq sV := by
  norm_num [normSq, sZ2, sM, sT, sE4, sV, sP2, add, wZ2,
    wM, wT, wE4, wV, wP2, tau, certR, delimiterR, rho, kappa,
    twoCSq, alpha, alphaBar]

theorem suffixF_sq : normSq sF = normSq sP2 := by
  norm_num [normSq, sF, sZ2, sM, sT, sE4, sV, sP2, add, wF,
    wZ2, wM, wT, wE4, wV, wP2, tau, certR, delimiterR, rho,
    kappa, twoCSq, alpha, alphaBar]

theorem all_suffix_normSq_lt_one :
    normSq sF < 1 ∧ normSq sZ2 < 1 ∧ normSq sM < 1 ∧
    normSq sT < 1 ∧ normSq sE4 < 1 ∧ normSq sV < 1 ∧
    normSq sP2 < 1 := by
  rw [suffixF_sq, suffixZ2_sq, suffixM_sq, suffixP2_sq,
    suffixV_sq, suffixE4_sq, suffixT_sq]
  norm_num

theorem smallest_displayed_slack :
    1 - normSq sT =
      (83704051662389072752519 : ℝ) /
        22771413604000000000000000000 := by
  rw [suffixT_sq]
  norm_num

theorem adjacent_swap_state_lt_one : (alphaBar * tau) ^ 2 < 1 := by
  norm_num [alphaBar, tau, certR, delimiterR, rho]

theorem source_constant : 2 * tau * rho = certR := by
  norm_num [tau, rho, certR, delimiterR]

end DelimiterLedger

/-- Machine-checked scalar conclusion used by the unconditional revision.

    This theorem deliberately makes no assertion about `E(T)`: the geometric

    and large interval-certificate layers are documented separately. -/
theorem unconditional_numeric_certificate :
    DR < delimiterR ∧
    areaBound = triangleArea / DR^2 ∧
    areaBound < (64471 : ℝ) / 250000 := by
  exact ⟨delimiterR_gt_DR, area_identity, area_lt_0257884⟩

end

end MoserTriangularCertificate
\end{lstlisting}

\section{Exact symbolic replay verifier}\label{app:verifier}

The exhaustive signed finite certificate is intentionally kept outside the
proof-only Lean source.  The first Python source reconstructs the original
quadratic-surd ledger and invokes the signed delimiter replay.  The second
source is the independent exact integer/rational interval checker.  It
reconstructs the 512 one-delimiter and 4096 two-delimiter deque-order families,
checks the rational calibrations and every stored suffix bound, verifies the
one-dimensional interval tilings and two-dimensional rectangle tilings, and
checks the exact reflection/time-reversal reduction of the remaining signed
quadrant.  No floating-point number is used in an asserted certificate
comparison.

The five supplementary exact-data files are
\begin{verbatim}
delimiter_interval_certificate_one.json
negative_delimiter_interval_certificate_one.json
double_delimiter_2d_certificate.json
signed_double_delimiter_negative-negative.json
signed_double_delimiter_negative-positive.json
\end{verbatim}
They contain $1540$ stored one-delimiter intervals and $1248$ stored
two-delimiter rectangles.  The reflected signed quadrant contributes another
$256$ rectangle instances, giving $1504$ signed two-delimiter instances in the
complete replay.  These rational data are proof-certificate inputs to the
Python checker; they are no longer duplicated as generated Lean declarations.

\begin{lstlisting}[style=PythonCode]
"""Dependency-free exact replay of the unconditional triangular certificate.

This entry point checks the original 25-order quadratic-surd ledger, all
scalar identities used in the metric proof, and then invokes the signed
delimiter replay.  It uses only the Python standard library.  Numerical
optimizers are not imported and no floating-point value is used by an
assertion.
"""

from __future__ import annotations

from dataclasses import dataclass
from fractions import Fraction as Q
from math import sqrt as floating_sqrt


RADICAND = 625565


@dataclass(frozen=True)
class Quad:
    """The exact number ``a + b*sqrt(RADICAND)``."""

    a: Q = Q(0)
    b: Q = Q(0)

    @staticmethod
    def coerce(value: object) -> "Quad":
        if isinstance(value, Quad):
            return value
        return Quad(Q(value), Q(0))

    def __add__(self, other: object) -> "Quad":
        other = Quad.coerce(other)
        return Quad(self.a + other.a, self.b + other.b)

    __radd__ = __add__

    def __neg__(self) -> "Quad":
        return Quad(-self.a, -self.b)

    def __sub__(self, other: object) -> "Quad":
        return self + (-Quad.coerce(other))

    def __rsub__(self, other: object) -> "Quad":
        return Quad.coerce(other) - self

    def __mul__(self, other: object) -> "Quad":
        other = Quad.coerce(other)
        return Quad(
            self.a*other.a + RADICAND*self.b*other.b,
            self.a*other.b + self.b*other.a,
        )

    __rmul__ = __mul__

    def __truediv__(self, other: object) -> "Quad":
        other = Quad.coerce(other)
        denominator = other.a*other.a - RADICAND*other.b*other.b
        assert denominator != 0
        return Quad(
            (self.a*other.a - RADICAND*self.b*other.b)/denominator,
            (self.b*other.a - self.a*other.b)/denominator,
        )

    def __pow__(self, exponent: int) -> "Quad":
        assert exponent >= 0
        result = Quad(1)
        base = self
        while exponent:
            if exponent & 1:
                result *= base
            base *= base
            exponent //= 2
        return result

    def rational(self) -> Q:
        assert self.b == 0
        return self.a

    def approximate(self) -> float:
        return float(self.a) + float(self.b)*floating_sqrt(RADICAND)


def positive(value: Quad) -> bool:
    """Exact sign test, using one integer comparison after rational squaring."""
    if value.b == 0:
        return value.a > 0
    if value.b > 0:
        return value.a >= 0 or value.b*value.b*RADICAND > value.a*value.a
    return value.a > 0 and value.a*value.a > value.b*value.b*RADICAND


def qlt(left: object, right: object) -> bool:
    return positive(Quad.coerce(right) - Quad.coerce(left))


Vector = tuple[Quad, Quad]
ZERO: Vector = (Quad(), Quad())


def vadd(left: Vector, right: Vector) -> Vector:
    return left[0] + right[0], left[1] + right[1]


def vscale(scale: object, vector: Vector) -> Vector:
    scale = Quad.coerce(scale)
    return scale*vector[0], scale*vector[1]


def vsum(vectors) -> Vector:
    result = ZERO
    for vector in vectors:
        result = vadd(result, vector)
    return result


def dot(left: Vector, right: Vector) -> Quad:
    return left[0]*right[0] + left[1]*right[1]


def cross(left: Vector, right: Vector) -> Quad:
    return left[0]*right[1] - left[1]*right[0]


def original_ledger_replay() -> dict[str, object]:
    p, q, root = Q(766), Q(197), Quad(0, 1)
    s, c = Quad(0, p/RADICAND), Quad(0, q/RADICAND)
    assert root*root == Quad(RADICAND)
    assert s*root == Quad(p) and c*root == Quad(q)
    sc = (s*c).rational()

    mu = Q(10973, 125000)
    sin3 = 3*c - 4*c**3
    cos3 = 4*s**3 - 3*s
    d = (Quad(0), Quad(-1))
    p1 = vscale(mu, (c, -s))
    p2 = vscale(2*c, (s, c))
    p3 = vscale(2*c*mu, (s*s-c*c, 2*s*c))
    p4 = vscale(mu, (sin3, cos3))
    z1 = (2*s*c, s*s-c*c)
    z2 = (-2*s*c, s*s-c*c)
    q1 = vscale(mu, (-sin3, cos3))
    q2 = vscale(2*c*mu, (-(s*s-c*c), 2*s*c))
    q3 = vscale(2*c, (-s, c))
    q4 = vscale(mu, (-c, -s))
    pvec = [p1, p2, p3, p4]
    qvec = [q1, q2, q3, q4]

    assert vsum((p2, z2, d)) == ZERO
    assert vsum((q3, d, z1)) == ZERO
    assert vsum((p3, q1, p1)) == ZERO
    assert vsum((q2, q4, p4)) == ZERO
    assert vsum((*pvec, z1, z2, *qvec, d, d)) == ZERO

    cyclic = [d, p1, p2, p3, p4, z1, z2, q1, q2, q3, q4, d]
    assert all(positive(cross(left, right))
               for left, right in zip(cyclic[:-1], cyclic[1:]))

    rho = Q(301804, RADICAND)
    b = Q(197, 766)
    A = Q(547947, 301804)
    kappa = (s*s-c*c).rational()
    assert kappa == Q(547947, RADICAND)
    assert A == kappa/rho
    assert rho*rho+kappa*kappa == 1
    assert A+b == 1/rho
    assert rho > Q(5, 12) and A > b > 0

    def order(k: int, ell: int) -> list[Vector]:
        return (list(reversed(pvec[:k])) + [d] + pvec[k:] + [z1, z2]
                + qvec[:4-ell] + [d] + list(reversed(qvec[4-ell:])))

    orders = [order(k, ell) for k in range(5) for ell in range(5)]
    states: list[Quad] = []

    def suffix_squares(sequence: list[Vector]) -> list[Quad]:
        result = []
        for index in range(len(sequence)):
            value = vsum(sequence[index:])
            square = dot(value, value)
            if square != Quad():
                result.append(square)
        return result

    for sequence in orders:
        assert len(sequence) == 12 and vsum(sequence) == ZERO
        squares = suffix_squares(sequence)
        states.extend(squares)
        assert sorted(squares, key=lambda x: (x.a, x.b)) == sorted(
            suffix_squares(list(reversed(sequence))),
            key=lambda x: (x.a, x.b))
    assert len(states) == 275
    multiplicity: dict[Quad, int] = {}
    for square in states:
        multiplicity[square] = multiplicity.get(square, 0) + 1
    assert len(multiplicity) == 13 and sum(multiplicity.values()) == 275

    sqrt_upper = Q(790927, 1000)
    assert sqrt_upper*sqrt_upper > RADICAND
    Rhat = Q(27131, 25000)
    margins = []
    for square, count in multiplicity.items():
        assert square.b >= 0
        upper = square.a + square.b*sqrt_upper
        margin = Rhat*Rhat-upper
        assert margin > 0
        margins.append((margin, count))
    smallest_margin, worst_count = min(margins)
    assert smallest_margin == Q(2404020037, 488722656250000)

    Bmass = 4*sc*(1+mu)
    D = Bmass/Rhat
    assert Bmass == Q(10259298823, 9774453125)
    assert D == Q(82074390584, 84861020075) < 1

    X, Pa = Q(483585, 1000000), Q(875291, 1000000)
    Qa = 2*X/rho-2*Pa
    assert X*X+Pa*Pa < 1
    assert X*X+(2*b*X-Pa)**2 < 1
    assert (2*X)**2+Qa*Qa < 1
    assert Pa-b*X > 0 and 2*X > D
    aa = Q(9672, 10000)
    assert aa*aa*(1+b*b) < 1
    assert 2+aa*(b-A) > 0 and aa > D
    tail = (1/rho-2)*Q(5, 12)
    assert tail == Q(36595, 1207216)
    assert 1-tail*D > D

    area = sc/(D*D)
    assert area == Q(11511821678274125, 44639604443512928)
    assert area < Q(257884, 1000000)
    assert 186388846789*100 > 4317277**2

    return {
        "orders": len(orders),
        "states": len(states),
        "norms": len(multiplicity),
        "sqrt_upper": sqrt_upper,
        "sqrt_margin": sqrt_upper*sqrt_upper-RADICAND,
        "ledger_margin": smallest_margin,
        "worst_count": worst_count,
        "Bmass": Bmass,
        "D": D,
        "area": area,
    }


def main() -> None:
    summary = original_ledger_replay()
    print("balanced vector sum and four source rows: OK")
    print("cyclic folded-normal order: OK")
    print("inner-anchor orders:", summary["orders"])
    print("nonzero suffix states / distinct norms:",
          summary["states"], summary["norms"])
    print("sqrt upper enclosure / squared margin:",
          summary["sqrt_upper"], summary["sqrt_margin"])
    print("smallest inner-ledger squared-radius margin:",
          summary["ledger_margin"])
    print("Bmass / D / area:",
          summary["Bmass"], summary["D"], summary["area"])

    from MoserTriangularDelimiterReplay import verify_all_delimiters
    for line in verify_all_delimiters():
        print(line)
    print("ALL UNCONDITIONAL FINITE AND ARITHMETIC CHECKS: PASS")


if __name__ == "__main__":
    main()
\end{lstlisting}

\lstinputlisting[style=PythonCode]{MoserTriangularDelimiterReplay.py}

\vspace{1em}
\textbf{AI usage disclosure:} The author provided the methodological framework, while GPT-5.6 sol handled numerical calculations and proofs, and polished the language. The formalized proofs in Lean 4 code were assisted by GPT-5.6 sol. 

~\\
College of Engineering and Computer Science, University of Central Florida, Orlando, FL, USA

Email: \underline{zhipeng.deng@ucf.edu}

\end{document}